\newcommand{\R}{\mathbb{R}}
\newcommand{\Rn}{\mathbb{R}^n}
\newcommand{\Z}{\mathbb{Z}}
\newcommand{\N}{\mathbb{N}}
\renewcommand{\S}{\mathcal{S}}
\newcommand{\A}{\mathcal{A}}
\newcommand{\F}{\mathcal{F}}
\newcommand{\D}{\mathcal{D}}
\newcommand{\K}{\mathcal{K}}
\renewcommand{\H}{\mathcal{H}}
\newcommand{\MA}{\operatorname{MA}}
\newcommand{\tr}{\operatorname{tr}}
\newcommand{\Int}{\operatorname{Int}}
\newcommand{\M}{\mathcal{M}}
\newcommand{\vep}{\varepsilon}
\newcommand{\supp}{\operatorname{supp}}
\newcommand{\PV}{\operatorname{PV}}
\newcommand{\diam}{\operatorname{diam}}
\newtheorem{thm}{Theorem}[section]
\newtheorem{prop}[thm]{Proposition}
\newtheorem{cor}[thm]{Corollary}
\newtheorem{lem}[thm]{Lemma}
\theoremstyle{definition}
\newtheorem{defn}[thm]{Definition}
\newtheorem{rem}[thm]{Remark}
\numberwithin{equation}{section}
\author[L.~A.~Caffarelli]{Luis A.~Caffarelli}
\author[M.~Soria-Carro]{Mar\'ia Soria-Carro}
    \address{Department of Mathematics\\
    The University of Texas at Austin\\
    Austin, TX 78712, USA}
    \email{caffarel@math.utexas.edu, maria.soriac@math.utexas.edu}
\keywords{Nonlinear elliptic equations, integro-differential operators, fractional Laplacian, Monge-Amp\`{e}re, rearrangements}
\subjclass[2010]{Primary: 35J60. Secondary: 35B65, 35J96.}
\thanks{Research supported by NSF grant 1500871.}
\begin{document}

\title[Integro-differential operators from $\Delta^s$ to $\MA^s$]{On a family of fully nonlinear integro-differential operators: 
From fractional Laplacian to nonlocal Monge-Amp\`{e}re}

\begin{abstract}
We introduce a new family of intermediate operators between the fractional Laplacian and the Caffarelli--Silvestre nonlocal Monge-Amp\`{e}re that are given by infimums of integro-differential operators. 
Using rearrangement techniques, we obtain  representation formulas and give a connection to optimal transport. Finally, we consider a global Poisson problem, prescribing data at infinity, and prove existence, uniqueness, and $C^{1,1}$-regularity of solutions in the full space.
\end{abstract}

\maketitle

%%%%%%%%%%%%%%%%%%%%%%%%%%%%%%%%%%%%%%%%%%%%%
\section{Introduction}
%%%%%%%%%%%%%%%%%%%%%%%%%%%%%%%%%%%%%%%%%%%%%

Integro-differential equations arise in the study of stochastic processes with jumps, such as L\'{e}vy processes. A classical elliptic integro-differential operator is the fractional Laplacian,
$$
\Delta^su(x_0)= c_{n,s} \PV  \int_{\mathbb R^n} (u(x_0+x)-u(x_0))\frac{1}{|x|^{n+2s} }dx, \qquad s\in (0,1),
$$
which can be understood as an infinitesimal generator of a stable L\'{e}vy  process. These types of processes are very well studied in probability, and their generators may be given by
$$
L_Ku(x_0)=\int_{\mathbb R^n} (u(x_0+x)-u(x_0)- x \cdot \nabla u(x_0) )K(x)dx,
$$
where the kernel $K$ is a nonnegative function satisfying some integrability condition. 

Over the last few years, there has been significant interest in studying linear and nonlinear integro-differential equations from the analytical point of view. In particular, extremal operators like 
\begin{equation} \label{eq:integro}
F u(x_0) = \inf_{K\in \mathcal K} L_K u(x_0)
\end{equation}
 play a fundamental role in the regularity theory.
 See \cite{CS2,CS3,CS4,ROS} and the references therein.
The above equation is an example of a fully nonlinear equation that appears in optimal control problems and stochastic games \cite{Krylov, Nisio}. The infimum in \eqref{eq:integro} is taken over a family of admissible kernels $\K$ that depends on the applications. 
In fact, nonlocal Monge-Amp\`{e}re equations have been developed in the form \eqref{eq:integro}, for some choice of $\K$
\cite{GS,CC,CS}. 

The Monge-Amp\`{e}re equation arises in several problems in analysis and geometry, such as the mass transportation problem and the prescribed Gaussian curvature problem \cite{DPF}. The classical equation prescribes the determinant of the Hessian of some convex function $u$:
\begin{equation*}
\det (D^2 u) = f.
\end{equation*}
In the literature, there are different nonlocal versions of the Monge-Amp\`{e}re operator that Guillen--Schwab \cite{GS}, Caffarelli--Charro \cite{CC}, and Caffarelli--Silvestre  \cite{CS} have considered. See also \cite{MS} for a  nonlocal linearized Monge-Amp\`{e}re
equation given by Maldonado--Stinga.
These definitions are motivated by the following property: if $B$ is a positive definite symmetric matrix, then
\begin{equation} \label{eq:MAidentity}
n \det (B)^{1/n}  = \inf_{A \in \A} \tr(A^T  B A ), 
\end{equation}
where $\A=\{ A \in M_n:  A>0, \ \det(A)=1\}$ and $M_n$ is the set of $n\times n$ matrices.
If a convex function $u$ is $C^2$ at a point $x_0$, then by the previous identity with $B=D^2 u(x_0)$, we may write the Monge-Amp\`{e}re operator as a concave envelope of linear operators. It follows that
\begin{equation*}
n \det(D^ 2 u(x_0))^{1/n} =\inf_{A \in \A} \Delta [u \circ A](A^{-1}x_0).
\end{equation*}

Caffarelli--Charro study a fractional version of  $\det(D^ 2 u)^{1/n}$,  replacing the Laplacian by the fractional Laplacian in the previous identity. More precisely,
$$
\mathcal{D}^s u(x_0) = \inf_{A \in \A} \Delta^s [u \circ A](A^{-1}x_0) 
= c_{n,s} \inf_{A\in \A}\ \PV \int_{\Rn} \frac{u(x_0+x)-u(x_0)}{|A^{-1}x|^{n+2s}} \, dx,
$$
where $s\in (0,1)$ and $c_{n,s} \approx 1-s$ as $s\to 1$ (see also \cite{GS}).
A different approach based on geometric considerations was given by Caffarelli--Silvestre.  In fact, the authors consider kernels whose level sets are volume preserving transformations of the fractional Laplacian kernel. Namely,
\begin{align*}
 \MA^s u(x_0) & = c_{n,s} \inf_{K\in \K_n^s} \int_{\R^n} (u(x_0+x)-u(x_0)-x \cdot \nabla u(x_0) ) K(x) \, dx,
\end{align*}
where the infimum is taken over the family,
\begin{equation} \label{eq:kernelsMA}
 \K_n^s  =\Big\{K: \R^n \to \R_+ : |\{x\in \Rn: K(x) > r^{-n-2s}\}|=|B_r| \quad \hbox{for all}~ r>0\Big\}.
\end{equation}
Notice that $|A^{-1} x|^{-n-2s} \in \K_n^s$, for any $A\in \A$. Therefore,
$$
 \MA^s u(x_0) \leq \mathcal{D}^s u(x_0) \leq \Delta^s u(x_0).
$$ 
Moreover, both $ \MA^s u$ and $\mathcal{D}^s u$ converge to $\det(D^2 u)^{1/n}$, up to some constant, as $s\to 1$.

In this paper, we introduce a new family of operators of the form,
\begin{equation} \label{eq:defop}
 \inf_{K \in \K_k^s} \int_{\Rn} (u(x_0+x)-u(x_0)-x\cdot \nabla u(x_0)) K(x) \, dx,
\end{equation}
for any integer $1\leq k <n$, which arises from imposing certain geometric conditions on the kernels. Moreover, we will see that
$|y|^{-n-2s}\in \K_1^s \subset \K_k^s \subset \K_n^s,$ for $1<k<n$,
and thus, this family will be monotone decreasing, and bounded from above by the fractional Laplacian and by below by the Caffarelli--Silvestre nonlocal Monge-Amp\`{e}re.

The paper is organized as follows. In Section~\ref{sec:kernels}, we construct the family of admissible kernels $\K_k^s$, and give the precise definition of our operators for $C^{1,1}$-functions.
We introduce in Section~\ref{sec:RMP} the basic tools from the theory of rearrangements necessary for our goals. 
In Section~\ref{sec:inf}, we study the infimum in \eqref{eq:defop} and obtain a representation formula, provided some condition on the level sets is satisfied (see Theorem~\ref{thm:inf1}).  We also study the limit as $s\to1$ and give a connection to optimal transport.
 The H\"{o}lder continuity of $\F_k^s u$ is proved in Section~\ref{sec:reg}, following similar geometric techniques from \cite{CS}. 
In Section~\ref{sec:GPP}, we consider a global Poisson problem, prescribing data at infinity, and introduce a new definition of our operators for functions that are merely continuous and convex. We show existence of solutions via Perron's method and $C^{1,1}$-regularity in the full space by constructing appropriate barriers. 
Finally, we discuss some future directions in Section~\ref{sec:continuous}.

%%%%%%%%%%%%%%%%%%%%%%%%%%%%
\section{Construction of kernels} \label{sec:kernels}
%%%%%%%%%%%%%%%%%%%%%%%%%%%%

Let us start with the construction of the family of admissible kernels. Notice that any kernel $K$ in $\K_n^s$, defined in \eqref{eq:kernelsMA}, will have the same distribution function as the kernel of the fractional Laplacian, since for any $r>0$,
$$
\big\{x\in \Rn: |x|^{-n-2s} > r^{-n-2s} \big\} = B_r.
$$
Geometrically, this means that the level sets of $K$ are deformations in \textit{any} direction of $\Rn$ of the level sets of $|x|^{-n-2s}$, preserving the $n$-dimensional volume. 

In view of this approach, a natural way of finding an intermediate family of operators between the nonlocal Monge-Amp\`{e}re and the fractional Laplacian is to consider kernels whose level sets are deformations that preserve the $k$-dimensional Hausdorff measure $\H^k$, with $1\leq k<n$, of the restrictions of balls in $\Rn$ to hyperplanes generated by $\{e_i\}_{i=1}^k$.
\begin{figure}[h]
\centering
\begin{tikzpicture}[scale=0.6, use Hobby shortcut, closed=true]
 \shade [ball color = gray!40, opacity = 0.4] (0,0) circle [radius=3.03cm];
\draw [rotate around={0.:(0.,0.)},line width=0.4pt,dash pattern=on 1pt off 1pt] (0.,0.) ellipse (3.018521736515316cm and 0.4043185301409781cm);
\draw [rotate around={-0.5320261478892822:(0.018883213101843366,2.033541627541986)},line width=0.4pt,dash pattern=on 1pt off 1pt] (0.018883213101843366,2.033541627541986) ellipse (2.2110844521885764cm and 0.2766632996861545cm);
\draw [rotate around={-0.41249292988958997:(-0.014378375386446067,-1.9971396673078181)},line width=0.4pt,dash pattern=on 1pt off 1pt,fill=black,fill opacity=0.15] (-0.014378375386446067,-1.9971396673078181) ellipse (2.2504633222809285cm and 0.3245901313115684cm);
\draw [line width=0.4pt,dash pattern=on 1pt off 1pt] (-2.25,-2)-- (-3.1,-7.1);
\draw [line width=0.4pt,dash pattern=on 1pt off 1pt] (2.25,-2)-- (3.03,-6.85);
 \draw[line width=0.4pt,dash pattern=on 1pt off 1pt,fill=black,fill opacity=0.23]  (-3,-7)..(-2.5,-7.5) .. (-2,-6)..(-1.5,-5.5) .. (-1,-6.4).. (0,-6).. (0.5,-5.5).. (1,-5.3)..(1.5,-6).. (2,-6.2).. (3,-7)..(2.5,-7.5).. (2,-8.3).. (1,-7.5)..(0.5,-7.8).. (0,-8.1) .. (-1.5,-8.3) ..(-2.5,-8.5)..(-3,-7.5);
\begin{scriptsize}
\draw [fill=black] (0.,0.) circle (1.5pt);
\draw[color=black] (0.3,-7) node {$\{K(\cdot,z)>r^{-n-2s}\}$};
\draw[color=black] (6.7, -7) node {$\langle e_1, e_2 \rangle+z e_3$};
\draw[color=black] (4,2) node {$B_r \subset \mathbb{R}^3$};
\draw (6.,-5.)-- (3.,-9.);
\draw (-6,-9)-- (3,-9);
\draw (-3,-5)-- (-6.,-9.);
\draw (-3,-5)-- (6,-5);
\end{scriptsize}
\end{tikzpicture}
\caption{Area preserving deformation in $\R^3$.}
\label{fig:kernels}
\end{figure}
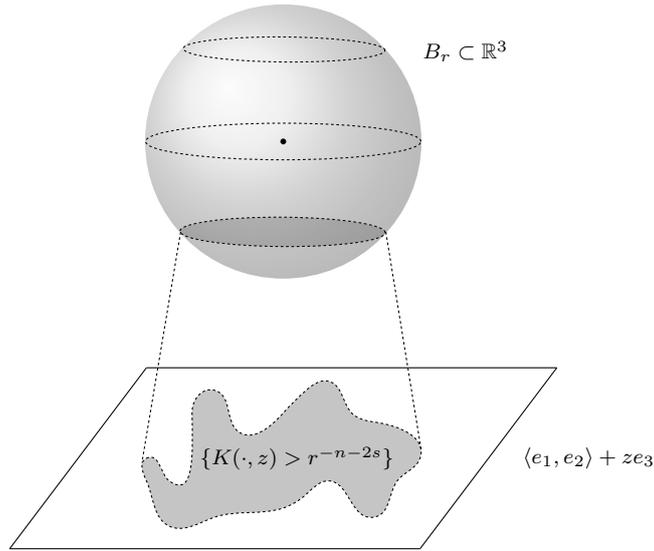

We define the set of admissible kernels as follows.  
\begin{defn} \label{def:kernels}
We say that $K \in \K_{k}^s$ if for all $z \in \R^{n-k}$, and all $r>0$,  it holds that
\begin{align} \label{eq:kernels}
\H^{k}\big(\{ y \in \R^k : K(y,z) > r^{-n-2s}\}\big) = 
\begin{cases}
\H^k\big(B_{(r^2-|z|^2)^{1/2}}\big) & \hbox{if}~|z|<r\\
0 & \hbox{if}~|z|\geq r,
\end{cases}
\end{align}
where $B_{(r^2-|z|^2)^{1/2}}$ is the ball in $\R^k$ of radius $(r^2-|z|^2)^{1/2}$. 
\end{defn}

In Figure~\ref{fig:kernels} we illustrate condition \eqref{eq:kernels} for $k=2$ and $n=3$.
Note that for $k=n$, we recover the definition of $\K_{n}^s$. Moreover, $|x|^{-n-2s} \in \K_{k}^s$, for all $k$.

\begin{prop} \label{prop:order}
Let $1\leq k<n$. Then $\K_{k}^s \subset \K_{k+1}^s \subseteq \K_{n}^s$.
\end{prop}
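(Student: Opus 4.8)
The plan is to show the two inclusions separately, with the main work being $\K_k^s \subset \K_{k+1}^s$; the inclusion $\K_{k+1}^s \subseteq \K_n^s$ will follow by iterating the first one (or by a direct argument when $k+1 = n$, which is trivial since the two families coincide). So fix $K \in \K_k^s$ and a point $(w,\zeta) \in \R^{k+1} \times \R^{n-k-1} = \R^n$, writing $w = (y,t)$ with $y \in \R^k$ and $t \in \R$, and $z = (t,\zeta) \in \R^{n-k}$. The goal is to verify condition \eqref{eq:kernels} at level $k+1$, i.e.\ to compute $\H^{k+1}(\{(y,t) \in \R^{k+1} : K(y,t,\zeta) > r^{-n-2s}\})$ and check it equals $\H^{k+1}(B_{(r^2 - |\zeta|^2)^{1/2}})$ when $|\zeta| < r$ and $0$ otherwise.

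The key step is to slice the $(k+1)$-dimensional level set by hyperplanes $\{t = \text{const}\}$ and apply Fubini (Cavalieri's principle for Hausdorff measure, which here is just Lebesgue measure on these Euclidean subspaces). For fixed $t$, the slice is $\{y \in \R^k : K(y,t,\zeta) > r^{-n-2s}\}$, and since $(t,\zeta) \in \R^{n-k}$ plays the role of the "$z$" variable in Definition~\ref{def:kernels} at level $k$, the hypothesis $K \in \K_k^s$ gives that this slice has $\H^k$-measure equal to $\H^k(B_{(r^2 - t^2 - |\zeta|^2)^{1/2}})$ when $t^2 + |\zeta|^2 < r^2$ and $0$ otherwise. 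Integrating in $t$,
\begin{align*}
\H^{k+1}\big(\{(y,t) : K(y,t,\zeta) > r^{-n-2s}\}\big)
&= \int_{\{t^2 < r^2 - |\zeta|^2\}} \H^k\big(B_{(r^2 - |\zeta|^2 - t^2)^{1/2}}\big) \, dt,
\end{align*}
which is precisely the Cavalieri computation of $\H^{k+1}(B_{(r^2 - |\zeta|^2)^{1/2}})$ when $|\zeta| < r$, and is $0$ (empty domain of integration) when $|\zeta| \geq r$. This establishes $K \in \K_{k+1}^s$.

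For the second inclusion, if $k + 1 < n$ we simply note that $\K_{k+1}^s \subset \K_{k+2}^s \subset \cdots \subset \K_n^s$ by applying the first inclusion repeatedly; and $\K_n^s$ is the family defined in \eqref{eq:kernelsMA}, so the chain terminates there. (Alternatively one could observe that the case $k=n$ of the slicing argument would need the level sets in the ambient space, but this is unnecessary since the inclusion into $\K_n^s$ is obtained for free by iteration.) The only point requiring a little care is the measurability of $t \mapsto \H^k(\{y : K(y,t,\zeta) > r^{-n-2s}\})$ needed to apply Fubini; this follows since $K$ is measurable and the slice measures of a measurable set are measurable functions of the slicing parameter. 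I expect the main obstacle to be purely bookkeeping: keeping the splitting $\R^n = \R^k \times \R \times \R^{n-k-1}$ and the corresponding radii $(r^2 - |z|^2)^{1/2}$ consistent across the two levels, and confirming that the Cavalieri integral indeed reproduces $\H^{k+1}$ of the stated ball rather than some rescaled quantity.
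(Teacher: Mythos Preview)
Your proposal is correct and follows essentially the same approach as the paper: slice the $(k+1)$-dimensional superlevel set by hyperplanes $\{t=\text{const}\}$, apply Fubini, and use the $\K_k^s$ hypothesis on each slice. The only difference is cosmetic---the paper computes the resulting integral $\int \omega_k (r^2 - |z|^2 - t^2)^{k/2}\,dt$ explicitly via a substitution and Gamma-function identities, whereas you recognize it directly as the Cavalieri formula for $\H^{k+1}(B_{(r^2-|\zeta|^2)^{1/2}})$, which is arguably cleaner.
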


\begin{proof}
Let $K\in \K_{k}^s$. Fix any $z\in \R^{n-k-1}$ and $r>0$.
Then:
\begin{align*}
\H^{k+1}\big(\{ y \in \R^{k+1} : K(y,z) > r^{-n-2s}\}\big) 
&= \int_{\R^{k+1}} \chi_{\{ y \in \R^{k+1} : K(y,z) > r^{-n-2s}\}}(y) \, dy\\
&=\int_\R \Big( \int_{\R^k} \chi_{\{ (w,t) \in \R^{k}\times \R \, : \, K(w,t,z) > r^{-n-2s}\}}(w,t) \, dw \Big)dt\\
&=\int_\R  \H^k\big(\{ w \in \R^{k} : K(w,t,z) > r^{-n-2s}\}\big)\, dt \equiv {\rm I}.
\end{align*}
If $|z|\geq r$, then for any $t\in \R$, we have that $(t,z)\in \R^{n-k}$, with $|(t,z)|>r$. Therefore, by Definition~\ref{eq:kernels}, it follows that ${\rm I}=0$.
If $|z|<r$, then
\begin{align*} 
{\rm I} &= \int_\R \H^k\big(B_{(r^2-t^2-|z|^2)^{1/2}}\big)\, dt\\
&= \omega_k \int_{-(r^2-|z|^2)^{1/2}}^{(r^2-|z|^2)^{1/2}} (r^2-t^2-|z|^2)^{\frac{k}{2}}\, dt\\
&=\omega_k  (r^2-|z|^2)^{\frac{k}{2}} \int_{-(r^2-|z|^2)^{1/2}}^{(r^2-|z|^2)^{1/2}} \Big(1-\Big(\frac{t}{(r^2-|z|^2)^{1/2}}\Big)^2\Big)^{\frac{k}{2}}\, dt\\
&= \omega_k (r^2-|z|^2)^{\frac{k+1}{2}} \int_{-1}^1 (1-\sigma^2)^{k/2}\, d\sigma\\
&= \frac{\pi^{k/2}}{\Gamma\big(\frac{k}{2}+1\big)}\frac{\pi^{1/2} \Gamma\big(\frac{k}{2}+1\big)}{\Gamma\big(\frac{k+1}{2}+1\big)} (r^2-|z|^2)^{\frac{k+1}{2}}\\
&= \omega_{k+1} (r^2-|z|^2)^{\frac{k+1}{2}}= \H^{k+1}\big(B_{(r^2-|z|^2)^{1/2}}\big),
\end{align*}
where $\omega_l= \H^l(B_1)= \frac{\pi^{l/2}}{\Gamma\big(\frac{l}{2}+1\big)}$, and $B_{(r^2-|z|^2)^{1/2}}$ is the ball of radius $(r^2-|z|^2)^{1/2}$ in $\R^{k+1}$.
\end{proof}

\begin{defn} \label{def:c11}
A function $u : \Rn \to \R$ is said to be $C^{1,1}$ at the point $x_0$, and we write $u\in C^{1,1}(x_0)$, if there is a vector $p\in \Rn$, a radius $\rho>0$, and a constant $C>0$, such that
$$
|u(x_0+x)-u(x_0)-   x\cdot p| \leq C |x|^2, \quad \text{for all}~x\in B_\rho.
$$
We denote by $[u]_{C^{1,1}(x_0)}$, the minimum constant for which this property holds, among all admissible vectors $p$ and radii $\rho$.
\end{defn}

\begin{defn} \label{def:IO}
Let $s\in(1/2,1)$ and $1\leq k<n$. For any $u\in C^0(\Rn)\cap C^{1,1}(x_0)$, we define
$$
\F_k^su(x_0) =  c_{n,s}\inf_{K\in \K_{k}^s} \int_{\Rn}  \big(u(x_0+x)-u(x_0)-x\cdot \nabla u(x_0) \big) K(x)\, dx,
$$
where $\K_{k}^s$ is the set of kernels satisfying \eqref{eq:kernels} and $c_{n,s}$ is the constant in $\Delta^s$.
\end{defn}

As an immediate consequence of Proposition~\ref{prop:order}, we obtain that the operators are ordered.

\begin{cor} \label{cor:order}
Let $s\in(1/2,1)$ and $1\leq k<n$. Then for any $u \in C^0(\Rn)\cap  C^{1,1}(x_0)$,
$$
\MA^su(x_0) \leq \F_k^su(x_0)\leq \Delta^{s} u(x_0).
$$
Moreover, $\{\F_k^s \}_{k=1}^{n-1}$ is monotone decreasing.
\end{cor}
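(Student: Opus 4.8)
The plan is to deduce the whole statement formally from Proposition~\ref{prop:order}, from the remark after Definition~\ref{def:kernels} that $|x|^{-n-2s}\in\K_k^s$ for every $k$, and from the elementary fact that an infimum can only decrease when its index set is enlarged. Fix $u\in C^0(\Rn)\cap C^{1,1}(x_0)$ and write, for brevity, $\delta u(x):=u(x_0+x)-u(x_0)-x\cdot\nabla u(x_0)$, so that $\F_k^s u(x_0)=c_{n,s}\inf_{K\in\K_k^s}\int_{\Rn}\delta u(x)\,K(x)\,dx$ with $c_{n,s}>0$, and $\MA^s u(x_0)$ is the analogous expression with $\K_n^s$ in place of $\K_k^s$.

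First I would treat the two lower bounds at once. By Proposition~\ref{prop:order} we have $\K_k^s\subset\K_{k+1}^s\subseteq\K_n^s$, so the family over which the infimum defining $\F_{k+1}^s u(x_0)$ is taken contains the one defining $\F_k^s u(x_0)$, which forces $\F_{k+1}^s u(x_0)\le\F_k^s u(x_0)$; this is exactly the claimed monotonicity of $\{\F_k^s\}_{k=1}^{n-1}$. Comparing instead with $\K_n^s$ gives $\MA^s u(x_0)\le\F_k^s u(x_0)$ in the same way.

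For the upper bound I would test the infimum against the single competitor $K(x)=|x|^{-n-2s}\in\K_k^s$, obtaining
\[
\F_k^s u(x_0)\ \le\ c_{n,s}\int_{\Rn}\delta u(x)\,\frac{dx}{|x|^{n+2s}},
\]
and then identify the right-hand side with $\Delta^s u(x_0)$. The key observation is that the linear correction term $x\cdot\nabla u(x_0)$ contributes nothing when integrated against the even kernel $|x|^{-n-2s}$: near the origin $|\delta u(x)|\le[u]_{C^{1,1}(x_0)}|x|^2$ is integrable against $|x|^{-n-2s}$ since $2s<2$, while on $\{|x|\ge 1\}$ the term $x\cdot\nabla u(x_0)\,|x|^{-n-2s}$ is integrable precisely because $s>1/2$; being odd, it integrates to zero over every ball $B_R$ and hence over $\Rn$. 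Removing it turns the (absolutely convergent) integral above into the principal-value integral defining $\Delta^s u(x_0)$, so $\F_k^s u(x_0)\le\Delta^s u(x_0)$, which finishes the proof.

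The argument is almost entirely formal; the only point needing a little care --- and the natural candidate for the ``main obstacle'', even though it is routine --- is the convergence and symmetry bookkeeping in the last step, namely checking that $\int_{\Rn}\delta u(x)\,|x|^{-n-2s}\,dx$ is genuinely absolutely convergent under the standing hypotheses (so that passing to balls $B_R$ and discarding the odd part is legitimate) and that this recovers precisely the $\PV$ definition of $\Delta^s$ used throughout. This is the same computation that underlies the well-posedness of the operators in Definition~\ref{def:IO}.
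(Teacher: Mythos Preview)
Your argument is correct and follows exactly the approach the paper intends: the paper states the corollary as ``an immediate consequence of Proposition~\ref{prop:order}'' without further elaboration, relying on the kernel inclusions $\K_k^s\subset\K_{k+1}^s\subseteq\K_n^s$ and the membership $|x|^{-n-2s}\in\K_k^s$ precisely as you do. Your additional paragraph verifying that $c_{n,s}\int_{\Rn}\delta u(x)\,|x|^{-n-2s}\,dx$ coincides with the principal-value definition of $\Delta^s u(x_0)$ is more explicit than the paper (which takes this identification for granted, as in the proof of Proposition~\ref{prop:finite}), but it is correct and entirely in the same spirit.
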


The regularity condition on $u$ in Definition~\ref{def:IO} allows us to compute $\F_k^su$ at the point $x_0$ in the classical sense. To obtain a finite number, we need to impose two extra conditions: 
\begin{enumerate}[$(P_1)$]
\item An integrability condition at infinity:
$$
\int_{\Rn} \frac{|u(x)|}{(1+|x|)^{n+2s}}\, dx<\infty.
$$
\item A convexity condition at $x_0$:
$$
\tilde u (x)\equiv u(x_0+x)-u(x_0)-x\cdot \nabla u(x_0) \geq 0, \quad\hbox{for all}~x\in \Rn. 
$$
\end{enumerate}

\begin{prop}\label{prop:finite}
If $u\in C^0(\Rn)\cap C^{1,1}(x_0)$ and satisfies $(P_1)$ and $(P_2)$, then
$$
0 \leq \F_k^s u(x_0) < \infty.
$$
\end{prop}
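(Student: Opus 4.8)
The plan is to treat the two inequalities separately. For the lower bound, note that by $(P_2)$ the function $\tilde u$ is nonnegative on all of $\Rn$, and by Definition~\ref{def:kernels} every $K\in\K_k^s$ is nonnegative; hence for each admissible $K$ the quantity $\int_{\Rn}\tilde u(x)K(x)\,dx$ is a well-defined element of $[0,+\infty]$ (in particular no principal value is needed), and since $c_{n,s}>0$, passing to the infimum yields $\F_k^s u(x_0)\ge 0$. For the upper bound it suffices to exhibit a single kernel $K\in\K_k^s$ for which the integral is finite, since the infimum is then no larger than that value. The natural candidate is the fractional Laplacian kernel $K_0(x)=|x|^{-n-2s}$, which lies in $\K_k^s$ for every $k$ by the remark following Definition~\ref{def:kernels}; so it remains to show $\int_{\Rn}\tilde u(x)|x|^{-n-2s}\,dx<\infty$, this integral being precisely $c_{n,s}^{-1}\Delta^s u(x_0)$.

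First I would record that $u\in C^{1,1}(x_0)$ forces $u$ to be differentiable at $x_0$ with gradient equal to the vector $p$ of Definition~\ref{def:c11} (divide the defining inequality by $|x|$ and let $x\to 0$), so that, with $C$ and $\rho$ admissible in that definition, $|\tilde u(x)|\le C|x|^2$ for $x\in B_\rho$. Splitting the integral at radius $\rho$, the inner part is controlled in polar coordinates by
$$
\int_{B_\rho}\tilde u(x)|x|^{-n-2s}\,dx\le C\int_{B_\rho}|x|^{2-n-2s}\,dx = C\,|\partial B_1|\int_0^\rho r^{1-2s}\,dr<\infty,
$$
the last integral converging because $s<1$ gives $1-2s>-1$.

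For the outer part I would use the crude pointwise bound $\tilde u(x)\le |u(x_0+x)|+|u(x_0)|+|\nabla u(x_0)|\,|x|$, valid for all $x$, and estimate the three resulting integrals over $\Rn\setminus B_\rho$ separately. The term with factor $|u(x_0)|$ converges since $n+2s>n$; the term with factor $|\nabla u(x_0)|$ equals a multiple of $\int_{\Rn\setminus B_\rho}|x|^{1-n-2s}\,dx$, which converges precisely because $s>1/2$ (this is the only place the restriction $s\in(1/2,1)$ is needed). For the term with factor $|u(x_0+x)|$, the change of variables $y=x_0+x$ gives $\int_{\{|y-x_0|\ge\rho\}}|u(y)|\,|y-x_0|^{-n-2s}\,dy$; comparing $|y-x_0|$ with $1+|y|$ one finds a constant $c=c(x_0,\rho)>0$ with $|y-x_0|\ge c(1+|y|)$ on that region, so this integral is at most $c^{-n-2s}$ times the quantity in $(P_1)$, hence finite. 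Collecting the estimates gives $\int_{\Rn}\tilde u(x)|x|^{-n-2s}\,dx<\infty$, whence $0\le \F_k^s u(x_0)\le c_{n,s}\int_{\Rn}\tilde u(x)|x|^{-n-2s}\,dx=\Delta^s u(x_0)<\infty$.

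There is no serious obstacle here; the routine points to be careful about are the elementary comparison $|y-x_0|\ge c(1+|y|)$ for $|y-x_0|\ge\rho$ and bookkeeping of which hypothesis rules out which divergence — $(P_2)$ for the lower bound and well-posedness of the integrals, $C^{1,1}(x_0)$ together with $s<1$ near the origin, $s>1/2$ for the gradient term at infinity, and $(P_1)$ for the growth of $u$ at infinity.
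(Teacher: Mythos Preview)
Your proposal is correct and follows essentially the same route as the paper: bound below by $0$ via $(P_2)$ and nonnegativity of the kernels, and bound above by testing the single kernel $K_0(x)=|x|^{-n-2s}\in\K_k^s$, splitting the integral at radius $\rho$, using the $C^{1,1}(x_0)$ estimate near the origin and the decomposition $\tilde u(x)\le |u(x_0+x)|+|u(x_0)|+|\nabla u(x_0)|\,|x|$ at infinity together with $(P_1)$ and $s\in(1/2,1)$. Your write-up is in fact slightly more careful than the paper's in two places: you note explicitly why $\nabla u(x_0)=p$, and you spell out the elementary comparison $|y-x_0|\ge c(x_0,\rho)(1+|y|)$ on $\{|y-x_0|\ge\rho\}$ that controls the $|u(x_0+x)|$ term.
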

\begin{proof}
Let $\rho>0$ be as in Definition~\ref{def:c11}. Then
\begin{align*}
0\leq \F_k^su(x_0) &\leq \int_{\Rn} \big(u(x_0+x)-u(x_0)-x\cdot \nabla u(x_0) \big) \frac{1}{|x|^{n+2s}}\, dx\\
&\leq  \int_{B_\rho} \frac{[u]_{C^{1,1}(x_0)} |x|^2}{|x|^{n+2s}} \, dx + \int_{\Rn \setminus B_\rho(x_0)} \frac{|u(x)|}{|x-x_0|^{n+2s}}\, dx\\
&\qquad+|u(x_0)| \int_{\Rn \setminus B_\rho} \frac{1}{|x|^{n+2s}}\, dx+|\nabla u(x_0)|\int_{\Rn \setminus B_\rho} \frac{ |x|}{|x|^{n+2s}}\, dx \\
& \leq C(s,\rho) \big( |u(x_0)| + |\nabla u(x_0)| + [u]_{C^{1,1}(x_0)}\big)\\
&\qquad + \tfrac{1+|x_0|+\rho}{\rho}\int_{\Rn} \frac{|u(x)|}{(1+|x|)^{n+2s}}\, dx<\infty, \quad \hbox{since}~s\in (1/2,1).
\end{align*}
\end{proof}

We point out that if $u$ is not convex at $x_0$, then the infimum could be $-\infty$. We show this result in the next proposition.
\begin{prop}
Let $u\in C^0(\Rn)\cap  C^{1,1}(x_0)$. Assume that $u$ satisfies $(P_1)$.
 If there exists $\bar{x}\in \Rn$ with $\bar{x}=(\bar{y},0)$ and $\bar y\in \R^k$, such that
$$\tilde{u}(\bar{x}) = u(x_0+\bar{x})-u(x_0)-\bar{x} \cdot\nabla u(x_0)<0,$$ 
then $\F_k^s u (x_0)=-\infty$.
\end{prop}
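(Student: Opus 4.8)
The plan is to exploit the fact that the condition $\tilde u(\bar x)<0$ at the specific point $\bar x=(\bar y,0)$ with $\bar y\in\R^k$ can be ``amplified'' by choosing kernels $K\in\K_k^s$ that concentrate as much mass as possible near $\bar x$ inside the slice $\R^k\times\{0\}$ (more precisely, near the ray through $\bar x$), while keeping the mandatory mass on each slice $\R^k\times\{z\}$ harmless. Since $\tilde u$ is continuous (because $u\in C^0\cap C^{1,1}(x_0)$) and $\tilde u(\bar x)<0$, there is a radius $\delta>0$ and $\eta>0$ with $\tilde u(x)\le -\eta<0$ for all $x\in B_\delta(\bar x)$. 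The strategy is to build a sequence $K_j\in\K_k^s$ for which $\int_{\Rn}\tilde u\,K_j\,dx\to-\infty$.

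First I would set up the slicing. By Definition~\ref{def:kernels}, a kernel in $\K_k^s$ is essentially free to be arranged on each horizontal slice $\{z=\text{const}\}$ as long as the $\H^k$-distribution of $y\mapsto K(y,z)$ matches that of the fractional Laplacian kernel on a ball of radius $(r^2-|z|^2)^{1/2}$. The key point is that on a single slice this distribution constraint only fixes, for each level $r$, the $k$-dimensional \emph{measure} of the superlevel set, not its \emph{location}. So on the slice $z=0$ I can move the superlevel sets so that they are nested balls centered at $\bar y$ rather than at the origin; this gives a kernel that, near $\bar x$, looks like $|y-\bar y|^{-n-2s}$ (with the appropriate truncation in $z$), hence has a non-integrable singularity precisely at $\bar x$. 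Concretely, I would take $K_j(y,z)$ to equal, on the slab $|z|<\rho_j$, a radial rearrangement in $y$ around $\bar y$ of the profile $|(y,z)|^{-n-2s}$ restricted appropriately, shrinking the slab thickness $\rho_j\to0$ so that essentially all the mass lives on slices that pass through $B_\delta(\bar x)$; outside the slab one completes $K_j$ arbitrarily subject to \eqref{eq:kernels}, e.g.\ using the standard $\K_k^s$ configuration centered on $\{z=0\}$ far away.

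Then I would estimate the integral. Split $\int_{\Rn}\tilde u\,K_j = \int_{A_j}\tilde u\,K_j + \int_{A_j^c}\tilde u\,K_j$, where $A_j$ is a small region around $\bar x$ (say $B_\delta(\bar x)$ intersected with the slab). On $A_j$ we have $\tilde u\le-\eta$, while by construction $K_j$ has a singularity like $|y-\bar y|^{-n-2s}$ on the slice $z=0$, so $\int_{A_j}K_j\,dx\to+\infty$ as $\rho_j\to0$ (the $k$-dimensional singular integral $\int_{B_\delta^k}|y-\bar y|^{-n-2s}dy$ diverges since $n+2s>k$), giving $\int_{A_j}\tilde u\,K_j\to-\infty$. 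On $A_j^c$, the mass of $K_j$ is bounded uniformly (by construction the total distribution is fixed, so the mass outside any fixed ball is controlled), and $\tilde u$ is controlled there by $(P_1)$ and the $C^{1,1}(x_0)$ bound exactly as in Proposition~\ref{prop:finite}; hence $\int_{A_j^c}\tilde u\,K_j$ stays bounded. Therefore $\F_k^s u(x_0)\le c_{n,s}\int_{\Rn}\tilde u\,K_j\,dx\to-\infty$, which forces $\F_k^s u(x_0)=-\infty$.

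The main obstacle I expect is making the rearrangement on slices rigorous while simultaneously respecting \eqref{eq:kernels} for \emph{every} $r$ and \emph{every} $z$, and ensuring the ``tail'' part of $K_j$ genuinely has uniformly bounded mass against $\tilde u$: one must check that concentrating mass at $\bar x$ on the central slices does not force a violation of the distribution constraint on other slices, and that the $\rho_j\to0$ limit really sends the central-slice contribution to $-\infty$ at a rate beating any fixed bound. This is essentially a careful bookkeeping of superlevel-set measures across slices — the rearrangement tools from Section~\ref{sec:RMP} should supply exactly what is needed, since they guarantee that a measure-preserving rearrangement of each slice keeps $K_j\in\K_k^s$. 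A secondary technical point is the restriction $\bar x=(\bar y,0)$: this is what allows the singularity to be placed on a slice whose ball-radius $(r^2-|z|^2)^{1/2}$ is \emph{maximal} ($=r$), so there is ``room'' to fit a full non-integrable $k$-dimensional singularity; I would note explicitly where this is used.
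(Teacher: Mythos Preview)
Your core idea—translate the kernel's singularity to $\bar x$ by rearranging in $y$ on each slice—is exactly what the paper does, but you have dressed it up with unnecessary machinery. The paper simply takes the \emph{single} kernel
\[
K(x)=|x-\bar x|^{-n-2s}.
\]
Because $\bar x=(\bar y,0)$ has zero $z$-component, this is a pure translation in the $y$-variable, so for each fixed $z$ the superlevel set $\{y:K(y,z)>r^{-n-2s}\}$ is the ball of radius $(r^2-|z|^2)^{1/2}$ centered at $\bar y$ instead of $0$; hence $K\in\K_k^s$ directly, with no slab, no sequence $\rho_j\to0$, and no rearrangement bookkeeping. Then on $B_\vep(\bar x)$ one has $\tilde u<0$ by continuity and $K\notin L^1(B_\vep(\bar x))$, so that piece is $-\infty$; the complement is finite exactly as in Proposition~\ref{prop:finite}.

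Two points in your write-up are muddled and worth correcting. First, the divergence on $A_j$ already holds for \emph{every fixed} $j$ (indeed for $\rho_j=\infty$): it is the $n$-dimensional integral $\int_{B_\vep(\bar x)}|x-\bar x|^{-n-2s}\,dx$ that diverges, not ``the $k$-dimensional singular integral on the slice $z=0$'', which is a set of $n$-dimensional measure zero and contributes nothing by itself. So the limit $\rho_j\to0$ plays no role. Second, your worry about respecting \eqref{eq:kernels} simultaneously across slices is unfounded: the constraint is slice-by-slice independent, so translating in $y$ on every slice (which is what $K(x)=|x-\bar x|^{-n-2s}$ does globally) automatically preserves it. The hypothesis $\bar x=(\bar y,0)$ is used precisely here: a translation with nonzero $z$-component would shift the slices themselves and destroy the match with the right-hand side of \eqref{eq:kernels}.
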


\begin{proof}
Let $K(x)=|x-\bar{x}|^{-n-2s}.$ For any $r>0$ and $z\in \R^{n-k}$, if $|z|<r$, then
\begin{align*}
\H^{k}\big(\{ y \in \R^k : K(y,z) > r^{-n-2s}\}\big)&=
\H^{k}\big(\{ y \in \R^k : |y-\bar y|^2 +|z|^2 < r^{2}\}\big)= \H^k\big(B_{(r^2-|z|^2)^{1/2}}\big).
\end{align*}
Also, the measure is clearly zero if $|z|\geq r$.
Therefore, $K\in \K_{k}^s$. It follows that
\begin{align*}
\F_k^s u(x_0) & \leq \int_{\Rn} \tilde{u}(x) |x-\bar x|^{-n-2s}\, dx\\
& = \int_{B_\vep(\bar x)} \tilde{u}(x) |x-\bar x|^{-n-2s}\, dx + \int_{\Rn \setminus B_\vep(\bar x)} \tilde{u}(x) |x-\bar x|^{-n-2s}\, dx \equiv {\rm I+II}.
\end{align*}
Since $u\in C^0(\Rn)\cap C^{1,1}(x_0)$, we have that $\tilde{u}$ is continuous. Hence, given that $\tilde{u}(\bar x)<0$, then $\tilde{u}(x) <0$, for all $x \in B_\vep(\bar x)$, for some $\vep >0$. Moreover, since $K \notin L^1(B_\vep(\bar x))$, it follows that ${\rm I}=-\infty$. 
Arguing similarly as in the proof of Proposition~\ref{prop:finite}, we see that ${\rm II}<\infty$. Therefore, $$\F_k^s u (x_0)=-\infty.$$
\end{proof}

\begin{rem} \label{rem:rotation}
The operators $\F_k^s$ are not rotation invariant. This is because, for simplicity, 
in the construction of the family of admissible kernels $\K_{k}^s$ we chose the first $k$ vectors from the canonical basis of $\Rn$. In general, we may take any subset of $k$ unitary vectors, $\tau=\{\tau_i\}_{i=1}^k$, and replace the first condition on \eqref{eq:kernels} by  
\begin{equation} \label{eq:kernels2}
\H^{k}\big(\{ y \in \langle  \tau \rangle^\perp  : K(y+z\tau) > r^{-n-2s}\}\big) = \H^k\big(B_{(r^2-|z|^2)^{1/2}}\big),
\end{equation}
for all $z\in \langle \tau \rangle$ and $r>0$, where $\langle \tau \rangle$ denotes the span of $\{\tau_i\}_{i=1}^k$, and $\langle \tau \rangle^\perp$ the orthogonal subspace to $\langle \tau \rangle$.
Let $SO(n)$ be the group of rotation matrices $n\times n$. Since $\tau_i=A e_i$, for some $A\in SO(n)$, it follows that any kernel $K_\tau$ satisfying \eqref{eq:kernels2} can be written as $K_\tau = K \circ A$, where $K$ satisfies \eqref{eq:kernels}. 
Therefore to make the operators rotation invariant, one possibility is to take the infimum over all possible rotations. Namely,
$$
\inf_{A\in SO(n)}  \inf_{K\in \K_{k}^s} \int_{\Rn} \tilde u(x) K(Ax)\, dx.
$$
To focus on the main ideas, we will not explore this operator in this work.
\end{rem}

%%%%%%%%%%%%%%%%%%%%%%%%%%%%%%%%%%%%%%%%%%
\section{Rearrangements and measure preserving transformations} \label{sec:RMP}
%%%%%%%%%%%%%%%%%%%%%%%%%%%%%%%%%%%%%%%%%%

We introduce some definitions and preliminary results regarding rearrangements of nonnegative functions. For more detailed information, see for instance \cite{BS,B}.

\begin{defn} \label{def:rearrange}
Let $f : \Rn \to \R$ be a nonnegative measurable function. We define the decreasing rearrangement of $f$ as the function $f^*$ defined on $[0,\infty)$ given by
\begin{equation*} \label{eq:DR}
f^*(t) = \sup \big\{ \lambda > 0 : |\{x\in \Rn : f(x) > \lambda \}|>t \big\},
\end{equation*}
and the increasing rearrangement of $f$ as the function $f_*$ defined on $[0,\infty)$ given by
\begin{equation*} \label{eq:IR}
f_*(t) = \inf \big\{ \lambda > 0 : |\{x\in \Rn : f(x) \leq \lambda \}|>t \big\}.
\end{equation*}
We use the convention that $\inf \emptyset = \infty$. 
\end{defn}

\begin{prop} \label{prop:HLI}
Let $f,g : \Rn \to \R$ be nonnegative measurable functions.
Then
$$
\int_{0}^\infty  f_*(t) g^*(t)\, dt \leq \int_{\Rn} f(x) g(x)\, dx \leq \int_{0}^\infty  f^*(t) g^*(t)\, dt   .
$$
\end{prop}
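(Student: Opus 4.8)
The plan is to prove the Hardy--Littlewood inequality in the form stated by using the layer-cake (distribution function) representation of integrals and reducing everything to a two-dimensional measure comparison. First I would recall the standard fact that for a nonnegative measurable function $h$ on a measure space, $\int h\,d\mu = \int_0^\infty \mu(\{h>\lambda\})\,d\lambda$, and that the decreasing and increasing rearrangements are equimeasurable with the original functions on $[0,\infty)$ equipped with Lebesgue measure; in particular $|\{f>\lambda\}| = |\{f^*>\lambda\}|$ and similarly for $f_*$. This lets me write
\begin{equation*}
\int_{\Rn} f(x)g(x)\,dx = \int_0^\infty\!\!\int_0^\infty |\{x : f(x)>\lambda,\ g(x)>\mu\}|\,d\lambda\,d\mu,
\end{equation*}
and the same double-layer-cake formula for the one-dimensional integrals, e.g.
\begin{equation*}
\int_0^\infty f^*(t)g^*(t)\,dt = \int_0^\infty\!\!\int_0^\infty \big|\{t>0 : f^*(t)>\lambda,\ g^*(t)>\mu\}\big|\,d\lambda\,d\mu.
\end{equation*}
So the whole statement reduces to two pointwise (in $\lambda,\mu$) inequalities between the measures of superlevel sets.

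For the upper bound, fix $\lambda,\mu>0$ and set $a = |\{f>\lambda\}|$, $b = |\{g>\mu\}|$. Since $f^*$ is nonincreasing and equimeasurable with $f$, one checks $\{t : f^*(t)>\lambda\} = [0,a)$ (using the precise form of the definition of $f^*$, with a short argument that $f^*(t)>\lambda \iff |\{f>\lambda\}|>t$), and likewise $\{t : g^*(t)>\mu\} = [0,b)$. Hence the intersection has measure $\min(a,b)$. On the other hand, the measure of $\{f>\lambda\}\cap\{g>\mu\}$ is clearly at most $\min(a,b)$. Integrating this inequality in $\lambda$ and $\mu$ gives $\int fg \le \int_0^\infty f^*g^*$.

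For the lower bound, the key observation is that $f_*$ can be related to the decreasing rearrangement of $f$ restricted to a set of finite measure, or handled directly: fix $\lambda,\mu$, let $a=|\{f>\lambda\}|$ and $b=|\{g>\mu\}|$ as before. Using the definition of $f_*$, one shows $\{t : f_*(t)>\lambda\}$ is of the form $(c,\infty)$ where $c = |\{f\le\lambda\}|$, so that $\{t : f_*(t)>\lambda,\ g^*(t)>\mu\} = (c,b)$, which has measure $(b-c)_+ = (b - |\{f\le\lambda\}|)_+$. Now I claim $|\{f>\lambda\}\cap\{g>\mu\}| \ge (|\{g>\mu\}| - |\{f\le\lambda\}|)_+$; this is just the elementary fact that for sets $E,G$ in a measure space, $|E\cap G| \ge |G| - |E^c|$. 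Applying this with $E=\{f>\lambda\}$, $G=\{g>\mu\}$ and integrating in $\lambda,\mu$ yields $\int fg \ge \int_0^\infty f_* g^*$.

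The main obstacle I anticipate is purely bookkeeping rather than conceptual: one must be careful with the strict-versus-nonstrict inequalities hidden in the ``$\sup$'' and ``$\inf$'' definitions of $f^*$ and $f_*$ (for instance, distinguishing $|\{f>\lambda\}|$ from $|\{f\ge\lambda\}|$, which differ only at countably many $\lambda$ and hence not after integration), and with the convention $\inf\emptyset=\infty$ when a superlevel set has infinite measure — in which case the relevant interval is all of $[0,\infty)$ and the inequalities still hold, possibly with both sides infinite. Handling these measure-zero exceptional sets of $\lambda$ and the possibility of infinite values cleanly is the only delicate point; the structural argument via the double layer-cake formula and the set inequalities $|E\cap G|\le\min(|E|,|G|)$ and $|E\cap G|\ge|G|-|E^c|$ is otherwise immediate.
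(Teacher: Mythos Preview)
Your approach is correct and genuinely different from the paper's. The paper does not prove the upper bound at all (it is cited as the classical Hardy--Littlewood inequality from \cite{BS,B}) and proves the lower bound by a truncation-and-limit argument: restrict $f$ and $g$ to balls $B_j$, invoke the known inequality $\int_0^{|B_j|}(f_j)_*(g_j)^*\,dt \le \int_{B_j} f_j g_j$ on a finite-measure space (cited from Baernstein), and then pass to the limit using the monotonicity $(f_j)_*\ge f_*$ and $(g_j)^*\nearrow g^*$ together with monotone convergence. Your route is instead a direct, self-contained proof of both inequalities at once: the double layer-cake identity reduces each side to $\int\!\!\int |\{\cdot>\lambda\}\cap\{\cdot>\mu\}|\,d\lambda\,d\mu$, and the two bounds become the elementary set inequalities $|E\cap G|\le\min(|E|,|G|)$ and $|E\cap G|\ge|G|-|E^c|$. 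This buys you a proof that does not rely on any cited lemma or on a limiting procedure, at the cost of the endpoint bookkeeping you already flagged (strict versus nonstrict superlevel sets, the form of $\{f_*>\lambda\}$ when $|\{f\le\lambda\}|$ is infinite); those issues indeed affect only null sets in $\lambda$ or $t$ and vanish after integration. The paper's approach is shorter given the references, but yours is more transparent about why the inequalities hold.
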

The upper bound is the classical Hardy--Littlewood  inequality. For the proof see \cite[Theorem 2.2]{BS} or \cite[Corollary 2.16]{B}.
For the sake of completeness, we give the proof of the lower bound.
\begin{proof}
For $j\geq 1$, let $f_j= f|_{B_j}$ and $g_j= g|_{B_j}$, where $B_j$ denotes the ball of radius $j$ centered at $0$ in $\Rn$.
By \cite[Corollary~2.18]{B}, it follows that
$$
\int_0^{|B_j|} (f_j)_*(t) (g_j)^*(t)\, dt \leq \int_{B_j} f_j(x) g_j(x)\, dx.
$$ 
Since $f,g\geq0$, we get that
\begin{equation*} 
 \int_{B_j} f_j(x) g_j(x)\, dx \leq \int_{\Rn} f(x) g(x)\, dx.
\end{equation*}
 Note that for any $t \in [0, |B_j|]$, we have
$$\big\{\lambda>0 : |\{x\in B_j : f_j(x) \leq \lambda\}|>t\big\} \subset \big\{\lambda>0 : |\{x\in \Rn: f(x) \leq \lambda\}|>t\big\}.$$
Hence, $(f_j)_*(t) \geq f_*(t)$, and
\begin{equation*} 
\int_0^{|B_j|} (f_j)_*(t) (g_j)^*(t)\, dt \geq \int_0^{|B_j|} f_*(t) (g_j)^*(t)\, dt. 
 \end{equation*}
Moreover, $g_j \nearrow g$ pointwise on $\Rn$. Then by \cite[Proposition~1.39]{B}, we have 
$(g_j)^*\nearrow g^*$ pointwise on $[0, \infty)$, as $j\to \infty$. By the monotone convergence theorem, we get
\begin{equation*}
\lim_{j \to \infty}  \int_0^{|B_j|} f_*(t) (g_j)^*(t)\, dt =  \int_{0}^\infty  f_*(t) g^*(t)\, dt.
\end{equation*}
Combining the previous estimates, we conclude that
$$
\int_{0}^\infty  f_*(t) g^*(t)\, dt \leq \int_{\Rn} f(x) g(x)\, dx. 
$$
\end{proof}

\begin{defn}\label{def:MP}
We say that a measurable function $\psi : \R^l \to \R^m$ is a measure preserving transformation if for any measurable set $E$ in $\R^m$, it holds that
$$
\H^l(\psi^{-1}(E)) = \H^m(E).
$$
\end{defn}

\begin{lem} \label{lem:mp}
If $\psi : \R^l \to \R^m$ is a measure preserving, then for any measurable $f: \R^m\to \R$, and any measurable set $E$ in $\R^m$, it follows that
$$\int_{E} f(y) \, dy = \int_{\psi^{-1} (E)} f(\psi(z))\, dz.$$
\end{lem}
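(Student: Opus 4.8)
The plan is to reduce the statement to the standard change-of-variables (layer-cake) identity for measure preserving transformations, proved first for indicator functions and then lifted to general measurable $f$ by the usual measure-theoretic bootstrap. First I would observe that it suffices to prove the identity $\int_{\R^m} g(y)\, dy = \int_{\R^l} g(\psi(z))\, dz$ for nonnegative measurable $g : \R^m \to \R$; the version with a set $E$ follows by taking $g = f \chi_E$ and noting that $(f\chi_E)(\psi(z)) = f(\psi(z))\chi_E(\psi(z)) = f(\psi(z))\chi_{\psi^{-1}(E)}(z)$, so that integrating over all of $\R^l$ recovers $\int_{\psi^{-1}(E)} f(\psi(z))\,dz$. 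The general (signed) case then follows by splitting $f = f^+ - f^-$, provided both pieces have finite integral; if the paper intends the identity in the extended sense (allowing $+\infty$ on both sides), no integrability hypothesis is needed and the nonnegative case already gives everything.

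Next I would run the bootstrap for nonnegative $g$. For $g = \chi_A$ with $A \subset \R^m$ measurable, both sides reduce exactly to the defining property of a measure preserving transformation: $\int_{\R^m}\chi_A(y)\,dy = \H^m(A)$ and $\int_{\R^l}\chi_A(\psi(z))\,dz = \int_{\R^l}\chi_{\psi^{-1}(A)}(z)\,dz = \H^l(\psi^{-1}(A))$, and these agree by Definition~\ref{def:MP}. By linearity of the integral the identity extends to nonnegative simple functions $g = \sum_{i} a_i \chi_{A_i}$. For general nonnegative measurable $g$, choose an increasing sequence of nonnegative simple functions $g_j \nearrow g$ pointwise on $\R^m$; then $g_j \circ \psi \nearrow g \circ \psi$ pointwise on $\R^l$ (here one uses that $\psi$ is defined, hence $\psi(z)$ makes sense, for a.e. $z$), and the monotone convergence theorem applied on both sides gives $\int_{\R^m} g\,dy = \lim_j \int_{\R^m} g_j\,dy = \lim_j \int_{\R^l} g_j\circ\psi\,dz = \int_{\R^l} g\circ\psi\,dz$.

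The only genuine subtlety — and the step I would flag as the main obstacle — is measurability of the composition $f \circ \psi$, which is needed for the integrals on the right-hand side to make sense. A merely measurable $\psi$ need not pull measurable sets back to measurable sets in general, but the measure preserving property forces exactly this: for Borel $E$, $\psi^{-1}(E)$ is required to be $\H^l$-measurable with $\H^l(\psi^{-1}(E)) = \H^m(E)$, so $f \circ \psi$ is Lebesgue measurable whenever $f$ is Borel measurable, and one extends to Lebesgue measurable $f$ by modifying on a null set (again using that $\psi$ pulls back null sets to null sets, which is the $E$ with $\H^m(E)=0$ case of the definition). In the write-up I would either state this measurability remark explicitly or, more economically, note that it is implicit in the hypothesis of Definition~\ref{def:MP} and proceed with the layer-cake argument above, which is then entirely routine.
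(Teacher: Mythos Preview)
Your argument is correct: the standard bootstrap from indicators to simple functions to nonnegative measurable functions via monotone convergence, followed by splitting into positive and negative parts, is exactly the right way to establish this change-of-variables identity, and your reduction to the case $E=\R^m$ via $g=f\chi_E$ is clean. The measurability remark you flag is a genuine (if routine) point and is handled appropriately.

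As for comparison with the paper: the paper states this lemma without proof, treating it as a standard fact from measure theory. So there is no alternative approach to contrast with; your write-up simply fills in what the authors left to the reader, and does so along the expected lines.
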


An important result by Ryff \cite{Ryff} provides a sufficient condition for which we can recover a function given its decreasing/increasing rearrangement, by means of a measure preserving transformation. 
\begin{thm}[Ryff's theorem] \label{thm:ryff}
Let $f: \Rn \to \R $ be a nonnegative measurable function. If 
$\lim_{t\to \infty} f^*(t)=0,$
then there exists a measure preserving $\sigma : \supp(f) \to \supp(f^*)$ such that
$$
f= f^* \circ \sigma
$$
almost everywhere on the support of $f$.
Similarly, if $\lim_{t\to \infty} f_*(t)=\infty$, then $f= f_* \circ \sigma$.
\end{thm}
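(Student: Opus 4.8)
The plan is to use that a nonnegative measurable function and its decreasing rearrangement are \emph{equimeasurable}, together with the observation that the hypothesis $\lim_{t\to\infty}f^*(t)=0$ says precisely that every positive super-level set of $f$ has finite measure. Write $d(\lambda)=|\{x\in\Rn:f(x)>\lambda\}|$ for the distribution function of $f$. A standard property of the decreasing rearrangement (see \cite{BS,B}) gives $|\{t\ge 0:f^*(t)>\lambda\}|=d(\lambda)$ for all $\lambda>0$; equivalently, Lebesgue measure on $\supp(f)$ and Lebesgue measure on $\supp(f^*)$ push forward, under $f$ and under $f^*$ respectively, to one and the same Borel measure $\rho$ on $(0,\infty)$, namely the one with $\rho((\lambda,\infty))=d(\lambda)$. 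Under our hypothesis $\rho((\lambda,\infty))<\infty$ for every $\lambda>0$, so $\rho$ is $\sigma$-finite; this is the only place the hypothesis enters, and it is what makes the construction go through. The idea is then to take $\sigma$ to be ``$(f^*)^{-1}$ composed with $f$'', being careful where $f^*$ fails to be injective.

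Since $\rho$ is $\sigma$-finite it has at most countably many atoms; let $\Lambda=\{\lambda>0:|\{f=\lambda\}|>0\}$ and put $p_\lambda=|\{f=\lambda\}|$ for $\lambda\in\Lambda$. The first step is the ``continuous part'': the monotone function $f^*$ is injective on $(f^*)^{-1}\big((0,\infty)\setminus\Lambda\big)$, because on any interval where $f^*$ were constant, equal to some value $\lambda$, one would have $|\{f^*=\lambda\}|>0$, and then $\lambda\in\Lambda$ by equimeasurability. Its inverse on that set is Borel by monotonicity, so on $f^{-1}\big((0,\infty)\setminus\Lambda\big)$ I would set $\sigma=(f^*)^{-1}\circ f$; this is measure preserving there, since $f$ and $f^*$ induce one and the same, now non-atomic, measure $\rho|_{(0,\infty)\setminus\Lambda}$, and clearly $f^*\circ\sigma=f$.

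Next I would treat the atoms. For $\lambda\in\Lambda$, monotonicity of $f^*$ and equimeasurability show that $\{t:f^*(t)=\lambda\}$ is an interval of length exactly $p_\lambda$ beginning at $d(\lambda)$, and these intervals are pairwise disjoint. Both $\{f=\lambda\}$ with Lebesgue measure on $\Rn$ and this interval with Lebesgue measure on $\R$ are non-atomic standard measure spaces of the same finite mass $p_\lambda$, so a classical isomorphism theorem provides a measure preserving bijection $\sigma_\lambda$ between them; since $f\equiv\lambda$ on $\{f=\lambda\}$ and $f^*\equiv\lambda$ on the image interval, automatically $f^*\circ\sigma_\lambda=f$ there. (If an explicit choice is wanted, $\sigma_\lambda(x)=d(\lambda)+|\{y\in\{f=\lambda\}:y\preceq x\}|$ with $\preceq$ a fixed lexicographic order on $\Rn$ works, at the price of a short Fubini computation.) Finally I would glue: the sets $f^{-1}\big((0,\infty)\setminus\Lambda\big)$ and $\{f=\lambda\}$, $\lambda\in\Lambda$, partition $\supp(f)$ up to a null set, their images under the corresponding pieces of $\sigma$ partition $\supp(f^*)$ up to a null set, and the resulting map $\sigma$ is measurable and measure preserving by countable additivity, with $f^*\circ\sigma=f$ almost everywhere on $\supp(f)$.

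The step I expect to cost the most is the gluing and the attendant bookkeeping: verifying that the pieces genuinely exhaust $\supp(f)$ and $\supp(f^*)$ modulo null sets and assemble into one measurable, measure preserving map, and pinning down the precise shape of the flat intervals $\{t:f^*(t)=\lambda\}$. Invoking the isomorphism theorem for non-atomic measure spaces to dispose of the atoms of $\rho$ is standard but should be phrased carefully; the ``continuous'' part is essentially formal once equimeasurability is recorded. The increasing rearrangement case is entirely parallel: $f$ and $f_*$ are equimeasurable in the corresponding sense, the hypothesis $\lim_{t\to\infty}f_*(t)=\infty$ is exactly what makes the relevant pushforward $\sigma$-finite, $f_*$ is strictly increasing off its at most countably many flat levels, and the atoms are absorbed by the same device.
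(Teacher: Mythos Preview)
The paper does not prove this statement: Theorem~\ref{thm:ryff} is quoted as a known result due to Ryff \cite{Ryff} and used as a black box throughout Section~\ref{sec:inf}. There is therefore no ``paper's own proof'' to compare your attempt against.

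That said, your outline is the standard one and is essentially how Ryff's argument goes: record equimeasurability, observe that the hypothesis $f^*(t)\to 0$ makes every positive super-level set of $f$ finite (so the common pushforward $\rho$ is $\sigma$-finite with at most countably many atoms), invert $f^*$ on the strictly decreasing part to get $\sigma=(f^*)^{-1}\circ f$ there, and patch each flat level $\{f^*=\lambda\}$, $\lambda\in\Lambda$, by a measure-preserving map from $\{f=\lambda\}$ onto the corresponding interval. The gluing is, as you say, just bookkeeping.

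One small correction to your parenthetical: the explicit map $\sigma_\lambda(x)=d(\lambda)+|\{y\in\{f=\lambda\}:y\preceq x\}|$ with lexicographic order on $\R^n$ is \emph{not} a bijection when $n\ge 2$, since $\{y\preceq x\}$ differs from $\{y_1<x_1\}$ by a null set and the map therefore depends only on $x_1$. It \emph{is}, however, measure preserving (a short Fubini computation shows $t\mapsto|\{y\in\{f=\lambda\}:y_1<t\}|$ is absolutely continuous), and since Ryff's theorem only asks for a measure-preserving $\sigma$, not an invertible one, this is enough. So either drop the word ``bijection'' there, or keep the appeal to the isomorphism theorem for non-atomic standard spaces and omit the explicit formula.
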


 We will call  Ryff's map, a measure preserving $\sigma$ satisfying Ryff's theorem.
 \begin{rem}
 In general, $\sigma$ is not invertible. Furthermore, there may not exist a measure preserving transformation $\psi$ such that  $f^* = f \circ \psi$. 
 \end{rem}
 As a consequence of Ryff's theorem, we obtain a representation formula for the admissible  kernels. We denote  $\omega_k=\H^k(B_1)$.
 
 \begin{lem} \label{lem:repker}
Let $K \in \K_{k}^s$. Fix $z\in \R^{n-k}$ and denote by $K_z(y)=K(y,z)$. Then
$$
K_z^*(t) = \big(\big(\omega_k^{-1}t\big)^{2/k} + |z|^2 \big)^{-\frac{n+2s}{2}}.
$$
In particular, there exists a measure preserving $\sigma_z : \supp(K_z) \to (0,\infty)$, such that
 $$
 K(y,z) = K_z^*(\sigma_z(y)), \quad \hbox{for}~a.e.~y\in \supp(K_z).
 $$
 \end{lem}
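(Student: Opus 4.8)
The plan is to read off the distribution function of $K_z$ directly from Definition~\ref{def:kernels}, invert it to identify the decreasing rearrangement $K_z^*$, and then invoke Ryff's theorem (Theorem~\ref{thm:ryff}).

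First I would fix $z\in\R^{n-k}$ and $\lambda>0$, and write $\lambda=r^{-n-2s}$ with $r=\lambda^{-1/(n+2s)}>0$. Then the super-level set $\{y\in\R^k : K_z(y)>\lambda\}$ is exactly the set appearing in \eqref{eq:kernels}, so the distribution function of $K_z$ with respect to $\H^k$ (Lebesgue measure on $\R^k$) is
$$
\mu(\lambda):=\H^k\big(\{y\in\R^k : K_z(y)>\lambda\}\big)=
\begin{cases}
\omega_k\big(\lambda^{-2/(n+2s)}-|z|^2\big)^{k/2}, & 0<\lambda<|z|^{-n-2s},\\[1mm]
0, & \lambda\geq |z|^{-n-2s},
\end{cases}
$$
where the first alternative holds for every $\lambda>0$ when $z=0$ (with $|z|^{-n-2s}=\infty$), using $\H^k\big(B_{(r^2-|z|^2)^{1/2}}\big)=\omega_k(r^2-|z|^2)^{k/2}$.

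Next I would observe that on the interval where it is positive, $\mu$ is continuous, strictly decreasing, and maps onto $(0,\infty)$, with $\mu(\lambda)\to\infty$ as $\lambda\to0^+$. Since $K_z^*(t)=\sup\{\lambda>0:\mu(\lambda)>t\}$ by Definition~\ref{def:rearrange}, and $\mu$ is a continuous strictly decreasing bijection, this supremum equals the unique $\lambda$ solving $\mu(\lambda)=t$. Solving $\omega_k\big(\lambda^{-2/(n+2s)}-|z|^2\big)^{k/2}=t$ gives $\lambda^{-2/(n+2s)}=(\omega_k^{-1}t)^{2/k}+|z|^2$, i.e. $K_z^*(t)=\big((\omega_k^{-1}t)^{2/k}+|z|^2\big)^{-\frac{n+2s}{2}}$, which is the claimed formula. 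For the second assertion, $(\omega_k^{-1}t)^{2/k}\to\infty$ as $t\to\infty$ forces $\lim_{t\to\infty}K_z^*(t)=0$, so Theorem~\ref{thm:ryff} applies to the nonnegative measurable function $K_z$ on $\R^k$ and yields a measure preserving $\sigma_z:\supp(K_z)\to\supp(K_z^*)$ with $K_z=K_z^*\circ\sigma_z$ a.e.; since $K_z^*$ is positive and finite on $(0,\infty)$, this is the desired map.

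I expect the only real work to be the bookkeeping with $\mu$: checking the continuity and strict monotonicity that legitimize the inversion $K_z^*=\mu^{-1}$, and handling the degenerate case $z=0$ (where $\supp(K_z)=\R^k$ and $|z|^{-n-2s}$ must be read as $\infty$) separately from $z\neq0$. There is no serious obstacle beyond this.
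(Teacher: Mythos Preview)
Your proposal is correct and follows essentially the same approach as the paper: compute the distribution function of $K_z$ from the defining condition \eqref{eq:kernels}, invert it to obtain $K_z^*$, note that $K_z^*(t)\to0$ as $t\to\infty$, and apply Ryff's theorem. The paper carries out the inversion via a direct chain of equalities rather than first isolating $\mu(\lambda)$ and then appealing to its strict monotonicity, but the substance is identical.
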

 
 \begin{proof}
Fix  $z\in \R^{n-k}$.  Then
\begin{align*}
K_z^*(t) &= \sup \big\{ \lambda > 0 : \H^k\big(\{y\in \R^k : K(y,z) > \lambda \}\big)>t \big\}\\
&= \sup \big\{\lambda < |z|^{-n-2s} :   \H^k\big( B_{(\lambda^{-2/(n+2s)} - |z|^2)^{1/2}}\big) > t \big\}\\
&= \sup \big\{\lambda < |z|^{-n-2s} : \omega_k (\lambda ^{-2/(n+2s)}- |z|^2)^{k/2} > t \big\}\\
&= \sup \big\{ \lambda < |z|^{-n-2s} : \lambda ^{-2/(n+2s)}>  \big(\omega_k^{-1}t\big)^{2/k} + |z|^2 \big\}\\
&=\big(\big(\omega_k^{-1}t\big)^{2/k} + |z|^2 \big)^{-\frac{n+2s}{2}}.
\end{align*}
Moreover,
$
\lim_{t\to \infty} K_z^*(t)=0.
$
Therefore, the result  follows from Theorem~\ref{thm:ryff}.
 \end{proof}

In view of Definition~\ref{def:rearrange}, we introduce the symmetric rearrangement of a function in $\Rn$ with respect to the first $k$ variables as follows. 
Fix $k \in \N$ with $1\leq k < n$. 
Given $x\in \Rn$, we denote $x=(y,z)$, with $y\in \R^k$ and $z\in \R^{n-k}$. 
Furthermore, for $z$ fixed, we call $f_z$ the restriction of $f$ to $\R^k$. Namely, $f_z(y)=f(y,z)$.

\begin{defn}\label{def:rearrangement}
Let $f:\Rn \to \R$ be a nonnegative measurable function.
We define the $k$-symmetric \textit{decreasing} rearrangement of $f$ as the function $f^{*,k}: \Rn \to [0,\infty]$ given by
$$
f^{*,k}(x) = f_z^*(\omega_k |y|^k), 
$$
and the $k$-symmetric \textit{increasing} rearrangement as the function $f_{*,k} : \Rn \to [0,\infty]$  given by
$$
f_{*,k}(x) = (f_z)_*(\omega_k |y|^k).
$$
\end{defn}

When $k=n$, we obtain the usual symmetric rearrangement.

\begin{rem} \label{rem:rearrange}
(1) Notice that $f^{*,k}$ and $f_{*,k}$ are radially symmetric and monotone decreasing/increasing, with respect to $y$. In the literature, this type of symmetrization is also known as the Steiner symmetrization \cite[Chapter 6]{B}. \medskip

\noindent (2)
By Lemma~\ref{lem:repker}, we see that any kernel $K \in \K_{k}^s$ satisfies  
\begin{equation} \label{rem:ker}
K^{*,k}(x)=|x|^{-n-2s}, \qquad \hbox{for}~x\neq 0.
\end{equation}
\end{rem}

%%%%%%%%%%%%%%%%%%%%%%%%%%%%
\section{Analysis of $\F_k^s$} \label{sec:inf}
%%%%%%%%%%%%%%%%%%%%%%%%%%%%

Our main goal of this section is to study the infimum in the definition of the operator,
$$
\F_k^su(x_0) =  c_{n,s}\inf_{K\in \K_{k}^s} \int_{\Rn} \tilde u(x) K(x)\, dx,
$$ 
where $\tilde u(x) = u(x_0+x)-u(x_0)-x\cdot \nabla u(x_0)$. Throughout the section, we will assume that $u\in C^0(\Rn)\cap C^{1,1}(x_0)$ and satisfies properties $(P_1)$ and $(P_2)$, so that $0\leq \F_k^s u(x_0) <\infty$.\medskip

\subsection{Analysis of the infimum} We will study the following cases:\medskip

{\bf Case 1.} For all $\lambda> 0$ and $z\in \R^{n-k}$, 
$$\H^k \big ( \{y \in \R^k : \tilde{u}(y,z) \leq  \lambda \} \big) < \infty.$$
 
{\bf Case 2.} There exists some $\lambda_0> 0$ such that for all $z\in \R^{n-k}$, 
$$
\H^k \big ( \{y \in \R^k : \tilde{u}(y,z) \leq \lambda \} \big)
\begin{cases}
< \infty & \hbox{for} \ 0< \lambda < \lambda_0 \\
=\infty & \hbox{for} \ \lambda \geq \lambda_0.
\end{cases}
$$

\medskip

{\bf Case 3.}  For all $\lambda> 0$ and $z\in \R^{n-k}$, 
$$\H^k \big ( \{y \in \R^k : \tilde{u}(y,z) \leq \lambda \} \big) = \infty.$$

In the first case, when all of the level sets of $\tilde{u}$ have finite measure,  we show that the infimum is attained at some kernel whose level sets depend on the measure preserving transformation that rearranges the level sets of $\tilde u$. More precisely:

\begin{thm}\label{thm:inf1}
Suppose that for all $\lambda> 0$ and $z\in \R^{n-k}$,
$$\H^k \big ( \{y \in \R^k : \tilde{u}(y,z) \leq \lambda \} \big) < \infty.$$ 
Then, for any $z\in \R^{n-k}$, there exists a measure preserving $\sigma_z : \R^k \to [0,\infty)$ such that 
\begin{equation*}
\F_k^s u (x_0) = c_{n,s} \int_{\R^{n-k}} \int_{\R^k}\frac{\tilde{u}(y,z) }{\big((\omega_k^{-1}\sigma_z(y))^{2/k}+|z|^2\big)^{\frac{n+2s}{2}}} \, dydz.
\end{equation*}
In particular, the infimum is attained. 
\end{thm}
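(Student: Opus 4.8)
The plan is to reduce the infimum over the infinite-dimensional family $\K_k^s$ to a pointwise (in $z$) optimization that can be solved by a rearrangement inequality. For each fixed $z\in\R^{n-k}$, write the integral as $\int_{\R^{n-k}}\big(\int_{\R^k}\tilde u(y,z)K(y,z)\,dy\big)dz$ and focus on the inner integral $I(z;K)=\int_{\R^k}\tilde u_z(y)K_z(y)\,dy$. By Lemma~\ref{lem:repker}, every admissible $K$ has $K_z^*(t)=\big((\omega_k^{-1}t)^{2/k}+|z|^2\big)^{-(n+2s)/2}$, a fixed profile independent of which $K\in\K_k^s$ we chose; only the ``arrangement'' of $K_z$ in $\R^k$ is free. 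So the lower bound for $I(z;K)$ is governed by the function $\tilde u_z$ (whose level sets we do not get to move) paired against a function with prescribed decreasing rearrangement. The correct direction of Hardy--Littlewood is the \emph{lower} bound in Proposition~\ref{prop:HLI}: since $\tilde u_z\geq 0$ and we are minimizing, $I(z;K)\geq\int_0^\infty (\tilde u_z)_*(t)\,K_z^*(t)\,dt$, and this bound is the same for all $K\in\K_k^s$.

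The next step is to show this lower bound is attained. Here is where the hypothesis of the theorem enters: the assumption $\H^k(\{y:\tilde u_z(y)\le\lambda\})<\infty$ for all $\lambda>0$ and all $z$ guarantees that $(\tilde u_z)_*(t)\to\infty$ as $t\to\infty$ (equivalently, the distribution function of $\tilde u_z$ is finite at every level), so Ryff's theorem (Theorem~\ref{thm:ryff}, increasing version) applies to $\tilde u_z$: there is a measure preserving $\sigma_z:\R^k\to[0,\infty)$ with $\tilde u_z(y)=(\tilde u_z)_*(\sigma_z(y))$ for a.e.\ $y$. (One should check measurability of $z\mapsto\sigma_z$, or simply note the final formula only involves $\sigma_z$ for each fixed $z$ and Tonelli applies since the integrand is nonnegative; a brief remark suffices.) Now define the candidate kernel by $K^0(y,z)=K_z^*(\sigma_z(y))=\big((\omega_k^{-1}\sigma_z(y))^{2/k}+|z|^2\big)^{-(n+2s)/2}$. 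Because $\sigma_z$ is measure preserving $\R^k\to[0,\infty)$, the function $y\mapsto K^0(y,z)$ has decreasing rearrangement exactly $K_z^*$, hence $K^0$ satisfies \eqref{eq:kernels} and $K^0\in\K_k^s$. Then by Lemma~\ref{lem:mp} applied to $\sigma_z$,
\begin{equation*}
\int_{\R^k}\tilde u_z(y)K^0(y,z)\,dy=\int_{\R^k}(\tilde u_z)_*(\sigma_z(y))\,K_z^*(\sigma_z(y))\,dy=\int_0^\infty (\tilde u_z)_*(t)\,K_z^*(t)\,dt,
\end{equation*}
which matches the lower bound; integrating in $z$ gives $\F_k^s u(x_0)=c_{n,s}\int_{\R^{n-k}}\int_{\R^k}\tilde u(y,z)K^0(y,z)\,dy\,dz$, the claimed formula, and shows the infimum is attained at $K^0$.

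The main obstacle I anticipate is not any single inequality but the bookkeeping of rearrangements in the $y$-variable with $z$ as a parameter: one must be careful that the ``decreasing rearrangement of $K_z$'' in Lemma~\ref{lem:repker} is taken with respect to $\H^k$ on $\R^k$ (matching Definition~\ref{def:rearrangement}), that $(\tilde u_z)_*$ and $K_z^*$ are paired over $(0,\infty)$ with Lebesgue measure consistently, and that the hypothesis is exactly what makes $\lim_{t\to\infty}(\tilde u_z)_*(t)=\infty$ so that Ryff applies to $\tilde u_z$ (without it, $\tilde u_z$ could fail to be recoverable from its increasing rearrangement, which is precisely why Cases 2 and 3 are treated separately). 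A secondary technical point is integrability: one should invoke Proposition~\ref{prop:finite} to know the double integral is finite, and use Tonelli freely since $\tilde u\ge 0$ by $(P_2)$ and $K^0\ge 0$, so slicing in $z$ and then applying the one-dimensional Hardy--Littlewood bound on each slice is fully justified.
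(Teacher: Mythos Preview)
Your proposal is correct and follows essentially the same approach as the paper: fix $z$, apply the lower Hardy--Littlewood inequality (Proposition~\ref{prop:HLI}) to the inner integral using that $K_z^*$ is the fixed profile from Lemma~\ref{lem:repker}, invoke the hypothesis (via Lemma~\ref{lem:limitinfty}) to justify Ryff's theorem for $\tilde u_z$, and define the optimal kernel as $K^0(y,z)=K_z^*(\sigma_z(y))$, verifying $K^0\in\K_k^s$ because $\sigma_z$ is measure preserving. The paper carries out the verification that $K^0\in\K_k^s$ by an explicit level-set computation rather than your one-line rearrangement observation, but the substance is identical.
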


\begin{rem}
Observe that if $\tilde u(\cdot,z)$ is constant in some set of positive measure, then the kernel where the infimum is attained is not unique since the integral is invariant under any measure preserving rearrangement of $K$ within this set (see \cite{Ryff}).
\end{rem}

Before we give the proof of  Theorem~\ref{thm:inf1}, we need a lemma regarding the $k$-symmetric increasing rearrangement of $\tilde u$. By Definition~\ref{def:rearrangement}, this is given by the following expression:
\begin{equation*} \label{def:ksymm}
\tilde{u}_{*,k}(y,z) = \inf \big\{ \lambda > 0 : \H^k \big ( \{w \in \R^k : \tilde{u}(w,z) \leq \lambda \} \big) > \omega_k |y|^k\big\}.
\end{equation*}

\begin{lem} \label{lem:limitinfty}
Fix $z\in \R^{n-k}$. If $\H^k \big ( \{y \in \R^k : \tilde{u}(y,z) \leq \lambda \} \big) < \infty$, for all $\lambda> 0$, then 
\begin{equation*}
\lim_{|y| \to \infty} \tilde{u}_{*,k}(y,z) = \infty.
\end{equation*}
\end{lem}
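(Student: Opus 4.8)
The plan is to prove the contrapositive-style statement directly: assuming all superlevel (here sublevel) sets $\{y : \tilde u(y,z)\le\lambda\}$ have finite $\H^k$-measure for every $\lambda>0$, show that $\tilde u_{*,k}(y,z)\to\infty$ as $|y|\to\infty$. Since $\tilde u_{*,k}(\cdot,z)$ is, by construction, radial and monotone nondecreasing in $|y|$, the limit $L:=\lim_{|y|\to\infty}\tilde u_{*,k}(y,z)\in(0,\infty]$ exists. The goal is to rule out $L<\infty$.

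First I would unwind the definition. Writing $g(\lambda):=\H^k(\{w\in\R^k:\tilde u(w,z)\le\lambda\})$, the function $\lambda\mapsto g(\lambda)$ is nondecreasing, and $\tilde u_{*,k}(y,z)=\inf\{\lambda>0:g(\lambda)>\omega_k|y|^k\}$ is precisely the generalized inverse of $g$. Suppose for contradiction that $L<\infty$. Then for every $|y|$ we have $\tilde u_{*,k}(y,z)\le L$, which by the definition of the infimum means: for every $\vep>0$, $g(L+\vep)>\omega_k|y|^k$. Letting $|y|\to\infty$ gives $g(L+\vep)=\infty$ for every $\vep>0$. But that directly contradicts the hypothesis that $g(\lambda)<\infty$ for all $\lambda>0$ (take any $\lambda=L+\vep$). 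Hence $L=\infty$, which is the claim.

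The only point requiring a little care is the interchange of the inequality $g(L+\vep)>\omega_k|y|^k$ with the limit $|y|\to\infty$: one should note that the bound $\tilde u_{*,k}(y,z)\le L$ holds for \emph{all} $y$ (because the function is nondecreasing in $|y|$ and its supremum over all $y$ is $L$), so for a fixed $\vep>0$ the strict inequality $g(L+\vep)>\omega_k|y|^k$ is valid for all $y$ simultaneously; taking the supremum over $y\in\R^k$ of the right-hand side yields $g(L+\vep)\ge\sup_{y}\omega_k|y|^k=\infty$. A harmless subtlety: if $\tilde u_{*,k}(y,z)\le L$ fails to follow from $L$ being the supremum (e.g.\ the sup is not attained but the function stays strictly below $L$), the argument still works because $\tilde u_{*,k}(y,z)<L+\vep/2$ for all large $|y|$, and then $g(L+\vep)>g(L+\vep/2)\ge\omega_k|y|^k$ by monotonicity of $g$ and the inverse relationship; passing to the limit again forces $g(L+\vep)=\infty$.

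I do not expect any real obstacle here: the statement is essentially the elementary fact that the generalized inverse of a nondecreasing function which is everywhere finite must tend to $+\infty$. The main thing to get right is bookkeeping with the definitions of $\inf$, the convention $\inf\emptyset=\infty$, and the monotonicity of $g$; no rearrangement inequalities or measure-theoretic machinery beyond Definition~\ref{def:rearrangement} are needed.
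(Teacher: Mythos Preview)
Your argument is correct and is actually cleaner than the paper's. You proceed by contradiction, recognizing $\tilde u_{*,k}(\cdot,z)$ as the generalized inverse of the nondecreasing distribution function $g(\lambda)=\H^k(\{w:\tilde u(w,z)\le\lambda\})$: if the inverse stayed bounded by some $L<\infty$, then $g(L+\vep)$ would have to dominate $\omega_k|y|^k$ for every $y$, forcing $g(L+\vep)=\infty$ and contradicting the hypothesis. The paper instead gives a direct proof with a case distinction: either $g$ is uniformly bounded by some $M$ (and then $\tilde u_{*,k}(y,z)=\inf\emptyset=\infty$ once $\omega_k|y|^k>M$), or $g(\lambda)=M_\lambda$ with $M_\lambda\nearrow\infty$, in which case for any target $K$ one finds a threshold $M$ so that $\inf\{\lambda:M_\lambda>M\}\ge K$, and hence $\tilde u_{*,k}(y,z)\ge K$ whenever $\omega_k|y|^k>M$. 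Your route avoids the case split and the auxiliary sequence $\{M_\lambda\}$, and makes transparent that this is nothing more than the elementary fact that the generalized inverse of a finite nondecreasing function must diverge; the paper's version, on the other hand, is constructive and makes the radius beyond which $\tilde u_{*,k}$ exceeds a given level explicit.
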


\begin{proof}
 Assume there exists $M>0$, independent of $\lambda$, such that
\begin{equation}\label{eq:bddmeasure}
\H^k \big ( \{w \in \R^k : \tilde{u}(w,z) \leq \lambda \} \big)\leq M, \quad \hbox{for all}~\lambda>0. 
\end{equation}
Then for any $y \in \R^k$, with $\omega_k |y|^k>M$, we have that
$$
\tilde{u}_{*,k}(y,z)=\infty,
$$
since $\inf \emptyset=\infty$. 
If \eqref{eq:bddmeasure} does not hold, then there must be an increasing sequence $\{M_\lambda\}_{\lambda>0}$, with $M_\lambda \to \infty$, as $\lambda \to \infty$, such that 
$$
\H^k \big ( \{w \in \R^k : \tilde{u}(w,z) \leq \lambda \} \big)= M_\lambda. 
$$
Then for any $M>0$, there exists $\Lambda=\Lambda(M)>0$ such that $M_\lambda >M$, for all $\lambda >\Lambda$. Since $M_\lambda$ is monotone increasing, we can assume without loss of generality that $M_\Lambda \leq M$. Otherwise, we take  $\Lambda$ to be the minimum for which this property holds. Also, $\Lambda(M)$ is monotone increasing, and $\Lambda(M)\to \infty$, as $M\to \infty$. 
In particular, it holds that
$$
\inf \{\lambda > 0 : M_\lambda >M\} \geq  \Lambda(M) \to \infty \quad \hbox{as}~M\to \infty.
$$
Then for any $K>0$, there exists $M>0$ such that
$$
\inf \{\lambda > 0 : M_\lambda >M\}  \geq K.
$$
Therefore, for any $y\in \R^k$, with $\omega_k |y|^k > M$, we have
$$
\tilde{u}_{*,k}(y,z)= \inf \{\lambda > 0 : M_\lambda > \omega_k |y|^k\} 
\geq  \inf \{\lambda > 0 : M_\lambda >M\}  \geq  K.
$$
We conclude that
 $$\lim_{|y| \to \infty} \tilde{u}_{*,k}(y,z) = \infty.$$
\end{proof}

\begin{proof}[Proof of Theorem~\ref{thm:inf1}]
Since $u$ is convex at $x_0$, we have that $\tilde{u}(y,z)\geq 0$. Moreover, 
$$
\F_{k}^s u (x_0) = c_{n,s} \inf_{K\in \K_{k}^s} \int_{\R^{n-k}} \int_{\R^k} \tilde{u}(y,z) K(y,z) \, dydz.
$$
Fix  $z\in \R^{n-k}$ and consider the functions $f(y)=\tilde{u}(y,z)$ and $g(y)=K(y,z)$. Since 
$$
\H^k \big ( \{y \in \R^k : \tilde{u}(y,z) \leq \lambda \} \big) < \infty,
$$ 
for any $\lambda> 0$, then by Lemma~\ref{lem:limitinfty}, we have 
$$\lim_{t \to \infty} f_*(t)=\lim_{|y| \to \infty} f_{*,k}(x)=\infty,$$ 
with $f_{*,k}(x)=\tilde{u}_{*,k}(y,z)$ and $f_{*,k}(x)= f_*(\omega_k |y|^k)$.
By Ryff's theorem (Theorem~\ref{thm:ryff}), there exists a measure preserving $\sigma_z : \R^k \to [0,\infty)$, depending on $z$,  such that
\begin{equation}\label{eq:recoveru}
\tilde{u}(y,z)= f_*(\sigma_z(y)),
\end{equation}
for all $y\in \supp \tilde{u}(\cdot, z)\subseteq \R^k$.

Let
$K(y,z)=\big((\omega_k^{-1}\sigma_z(y))^{2/k}+|z|^2\big)^{-\frac{n+2s}{2}}.$ For any $r>|z|$, we have that
 \begin{align*}
 \H^k \big( \{ y \in \R^k : K(y,z)>r^{-n-2s}\} \big)
 &= \H^k \big( \{ y \in \R^k : \big((\omega_k^{-1}\sigma_z(y))^{2/k}+|z|^2\big)^{-\frac{n+2s}{2}}>r^{-n-2s}\} \big)\\
 &= \H^k \big( \{ y \in \R^k : \sigma_z(y)< \omega_k(r^2-|z|^2)^{k/2}\} \big)\\
&= \H^k \big(\sigma_z^{-1}\big((0,\omega_k (r^2-|z|^2)^{k/2})\big) \big)\\
 &=  \H^1\big(\big(0,\omega_k(r^2-|z|^2)^{k/2}\big)\big)\\
 &=\omega_k (r^2-|z|^2)^{k/2} = \H^k\big(B_{(r^2-|z|^2)^{k/2} }\big),
 \end{align*}
 since $\sigma_k$ is measure preserving (see Definition~\ref{def:MP}). Then $K\in\K_{k}^s$, and thus,
 $$
\F_{k}^s u (x_0) \leq c_{n,s}  \int_{\R^{n-k}} \int_{\R^k}\frac{\tilde{u}(y,z) }{\big((\omega_k^{-1}\sigma_z(y))^{2/k}+|z|^2\big)^{\frac{n+2s}{2}}} \, dydz.
$$
To prove the reverse inequality, let $K\in \K_{k}^s$. Applying Proposition~\ref{prop:HLI}, we see that
\begin{align*}
 \int_{\R^k} \tilde{u}(y,z) K(y,z) \, dy &\geq \int_0^\infty f_*(t) g^*(t)\, dt\\
 &=  \int_{\R^k} f_*(\sigma_z(y)) g^*(\sigma_z(y))\, dy\\
 &= \int_{\R^k} \tilde{u}(y,z) g^*(\sigma_z(y))\, dy,
\end{align*}
by Lemma~\ref{lem:mp} and \eqref{eq:recoveru}. Moreover, by the definition of rearrangements, 
\begin{align*}
g^*(\sigma_z(y)) 
&= \sup \big\{ \lambda> 0 : \H^k\big(\{w\in \R^k : K(w,z) > \lambda \}\big)>\sigma_z(y) \big\}= K^{*,k}(\tilde{y},z)
\end{align*}
with $\omega_k |\tilde{y}|^k=\sigma_z(y)$. By \eqref{rem:ker}, we get
$$
g^*(\sigma_z(y)) = \big(|\tilde{y}|^2+|z|^2\big)^{-\frac{n+2s}{2}}
=\big((\omega_k^{-1}\sigma_z(y))^{2/k}+|z|^2\big)^{-\frac{n+2s}{2}}.
$$
Hence, integrating over all $z\in \R^{n-k}$, and taking the infimum over all kernels $K\in \K_{k}^s$, we conclude that
$$
\F_k^su(x) = c_{n,s} \int_{\R^{n-k}} \int_{\R^k}\frac{\tilde{u}(y,z) }{\big((\omega_k^{-1}\sigma_z(y))^{2/k}+|z|^2\big)^{\frac{n+2s}{2}}} \, dydz.
$$
\end{proof}

\begin{rem}\label{rem:mp}
A natural question that arises from this result is whether there exists a measure preserving $\varphi_z : \R^k \to \R^k$ such that
$$
|\varphi_z(y)| = \big(\omega_k^{-1}\sigma_z(y)\big)^{1/k}.
$$
In that case, we would have that the infimum is attained at a kernel $K$ such that 
$$K(y,z)= |\phi(y,z)|^{-n-2s},$$
where $\phi : \Rn \to \Rn$ is a measure preserving with $\phi(y,z)=(\varphi_z(y),z)$. 

Recall that Ryff's theorem gives a representation of a function $f$ in terms of its increasing rearrangement $f_*$, that is, $f= f_* \circ \sigma$, with $\sigma : \R^k \to \R$ measure preserving. 
If this result were also true for the \textit{symmetric} increasing rearrangement, given by $f_\#(x) = f_*( \omega_k |x|^k)$, then there would exist a measure preserving $\varphi : \R^k \to \R^k$ such that $f= f_\# \circ \psi$. In particular, 
$$
f(x) =  f_\# (\varphi(x)) = f_*(\omega_k |\varphi(x)|^k)= f_*(\sigma(x)).
$$
Hence, it seems reasonable that $\omega_k |\varphi(x)|^k= \sigma(x)$.
As far as we know, this is an open problem.
\end{rem}

As an immediate consequence of Theorem~\ref{thm:inf1}, we obtain the following representation of the function $\F_{k}^s u$ in terms of the $k$-symmetric increasing rearrangement of $\tilde{u}$.
\begin{cor} \label{cor:inf}
Under the  assumptions of Theorem~\ref{thm:inf1}, we have
\begin{equation*}
\F_k^s u (x_0) = \Delta^s \tilde{u}_{*,k}(0).
\end{equation*}
\end{cor}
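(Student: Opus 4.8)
The plan is to apply Theorem~\ref{thm:inf1} directly and recognize the resulting integral as a fractional Laplacian of a rearranged function. First I would write, for each fixed $z\in\R^{n-k}$, the measure preserving map $\sigma_z:\R^k\to[0,\infty)$ from Theorem~\ref{thm:inf1}, so that
\begin{equation*}
\F_k^s u(x_0) = c_{n,s}\int_{\R^{n-k}}\int_{\R^k}\frac{\tilde u(y,z)}{\big((\omega_k^{-1}\sigma_z(y))^{2/k}+|z|^2\big)^{\frac{n+2s}{2}}}\,dy\,dz.
\end{equation*}
Recalling from \eqref{eq:recoveru} that $\tilde u(y,z) = f_*(\sigma_z(y))$ where $f_*=(\tilde u(\cdot,z))_*$ is the decreasing... increasing rearrangement on $\R^k$, and using that $\sigma_z$ is measure preserving, I would apply Lemma~\ref{lem:mp} to change variables in the inner integral. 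The point is that $\H^1$-a.e.\ value $t=\sigma_z(y)$ can be realized by a point $\tilde y\in\R^k$ with $\omega_k|\tilde y|^k = t$; pushing forward via the standard radial change of variables on $\R^k$ (the map $\tilde y\mapsto \omega_k|\tilde y|^k$ is measure preserving from $\R^k$ to $[0,\infty)$, up to the trivial one-dimensional rearrangement), the inner integral becomes
\begin{equation*}
\int_{\R^k}\frac{(\tilde u(\cdot,z))_*(\omega_k|\tilde y|^k)}{\big(|\tilde y|^2+|z|^2\big)^{\frac{n+2s}{2}}}\,d\tilde y = \int_{\R^k}\frac{\tilde u_{*,k}(\tilde y,z)}{\big(|\tilde y|^2+|z|^2\big)^{\frac{n+2s}{2}}}\,d\tilde y,
\end{equation*}
by Definition~\ref{def:rearrangement}.

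Next I would reassemble the $z$-integral: since $|(\tilde y,z)|^2 = |\tilde y|^2+|z|^2$, the double integral is exactly
\begin{equation*}
\F_k^s u(x_0) = c_{n,s}\int_{\Rn}\frac{\tilde u_{*,k}(x)}{|x|^{n+2s}}\,dx.
\end{equation*}
It remains to identify the right-hand side with $\Delta^s \tilde u_{*,k}(0)$. Here I would use two facts: first, by construction $\tilde u_{*,k}(0)=(\tilde u(\cdot,0))_*(0)= \inf_{\R^k}\tilde u(\cdot,0) = 0$, since $\tilde u\ge 0$ by $(P_2)$ and $\tilde u(0)=0$ (the minimum of the convex function $\tilde u$ over $\R^k\times\{0\}$ is $0$, attained at the origin); and second, $\tilde u_{*,k}$ is $C^{1,1}$ at $0$ with gradient $0$ there because $\tilde u\in C^{1,1}(x_0)$ forces a quadratic bound on $\tilde u$ near $0$ which is inherited by the rearrangement (monotone rearrangement of a function bounded above by $C|x|^2$ near the origin is still bounded by a multiple of $|x|^2$). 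Consequently the principal value in the definition of $\Delta^s$ applied to $\tilde u_{*,k}$ at $0$ reduces to $c_{n,s}\int_{\Rn}\tilde u_{*,k}(x)|x|^{-n-2s}\,dx$ with no PV correction needed — this is precisely our expression.

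The main obstacle I anticipate is the careful justification of the radial change of variables turning $\int_{\R^k}\varphi(\sigma_z(y))\,dy$ into $\int_{\R^k}\varphi(\omega_k|\tilde y|^k)\,d\tilde y$; strictly speaking Lemma~\ref{lem:mp} gives $\int_{\R^k}\varphi(\sigma_z(y))\,dy = \int_{(0,\infty)}\varphi(t)\,dt$ (taking $E=(0,\infty)$ and pulling back), and then one observes that the map $\tilde y\mapsto \omega_k|\tilde y|^k$ from $\R^k$ to $(0,\infty)$ is itself measure preserving, so $\int_{(0,\infty)}\varphi(t)\,dt=\int_{\R^k}\varphi(\omega_k|\tilde y|^k)\,d\tilde y$ again by Lemma~\ref{lem:mp}. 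The only subtlety is handling the support: outside $\supp\tilde u(\cdot,z)$ both sides vanish, so no issue. Everything else is bookkeeping with Definition~\ref{def:rearrangement} and the elementary observation that $\tilde u_{*,k}\in C^{1,1}(0)$ with value and gradient zero at the origin, which makes the PV integral for $\Delta^s$ absolutely convergent and equal to the stated expression.
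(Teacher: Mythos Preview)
Your proposal is correct and follows essentially the same route as the paper. The paper's proof starts from the Hardy--Littlewood form $\F_k^s u(x_0)=c_{n,s}\int_{\R^{n-k}}\int_0^\infty f_*(t)g^*(t)\,dt\,dz$ established inside the proof of Theorem~\ref{thm:inf1}, then performs the polar substitution $t=\omega_k r^k$ and passes to an integral over $\R^k$, identifying $f_*(\omega_k|y|^k)=\tilde u_{*,k}(y,z)$ and $g^*(\omega_k|y|^k)=K^{*,k}(y,z)=|x|^{-n-2s}$ via \eqref{rem:ker}; your two applications of Lemma~\ref{lem:mp} (first with $\sigma_z$, then with $\tilde y\mapsto\omega_k|\tilde y|^k$) amount to exactly this change of variables, just phrased in measure-preserving language rather than as explicit polar coordinates. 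Your additional remark that $\tilde u_{*,k}(0)=0$ with vanishing gradient (so the PV in $\Delta^s$ is an ordinary integral) is a detail the paper leaves implicit.
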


\begin{proof} 
Note that $\tilde{u}_{*,k}(0)=0$, since $\tilde{u}(0)=0$. Therefore, using the same notation as in the proof of Theorem~\ref{thm:inf1}, we showed that
\begin{align*}
\F_k^s u (x_0) &= c_{n,s} \int_{\R^{n-k}} \int_0^\infty f_*(t) g^*(t)\, dt dz\\
& = \omega_k  c_{n,s} \int_{\R^{n-k}} \int_0^\infty f_*(\omega_k r^k) g^*(\omega_k r^k) r^{k-1}\, drdz\\
 &=  c_{n,s} \int_{\R^{n-k}} \int_{\R^k} f_*(\omega_k |y|^k) g^* (\omega_k |y|^k ) \, dydz\\
 &=  c_{n,s} \int_{\R^{n-k}} \int_{\R^k} \tilde{u}_{*,k}(y,z) K^{*,k}(y,z)\, dydz\\
&=c_{n,s}  \int_{\R^{n-k}}\int_{\R^k} \frac{\tilde{u}_{*,k}(y,z)}{(|y|^2+|z|^2)^\frac{n+2s}{2}}\, dydz =  \Delta^s \tilde{u}_{*,k}(0).
\end{align*}
\end{proof}

From the previous result and the fact that the family of operators $\{\F_k\}_{k=1}^{n-1}$ is monotone decreasing, we see that the fractional Laplacian of the $k$-symmetric rearrangements are ordered at the origin.

\begin{cor} 
Suppose we are under the assumption of Theorem~\ref{thm:inf1}. Then
$$
   \Delta^s \tilde{u}_{*,k+1}(0) \leq \Delta^s \tilde{u}_{*,k}(0).
$$
\end{cor}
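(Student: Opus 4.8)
The plan is to deduce the inequality directly from the two preceding results: Corollary~\ref{cor:inf}, which identifies $\F_k^s u(x_0) = \Delta^s \tilde u_{*,k}(0)$ and likewise $\F_{k+1}^s u(x_0) = \Delta^s \tilde u_{*,k+1}(0)$, together with Corollary~\ref{cor:order} (via Proposition~\ref{prop:order}), which gives the monotonicity $\F_{k+1}^s u(x_0) \le \F_k^s u(x_0)$ for $u \in C^0(\Rn)\cap C^{1,1}(x_0)$. Chaining these,
\[
\Delta^s \tilde u_{*,k+1}(0) = \F_{k+1}^s u(x_0) \le \F_k^s u(x_0) = \Delta^s \tilde u_{*,k}(0),
\]
which is exactly the claim. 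So the entire proof is a two-line concatenation of identities and an inequality already in hand.

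The one point that needs a word of care is that Corollary~\ref{cor:inf} was stated under the hypotheses of Theorem~\ref{thm:inf1}, i.e.\ assuming that \emph{all} level sets $\{y\in\R^k:\tilde u(y,z)\le\lambda\}$ have finite $\H^k$-measure. For the representation $\F_{k+1}^s u(x_0) = \Delta^s \tilde u_{*,k+1}(0)$ to be legitimate, I need the analogous finiteness at level $k+1$. First I would observe that this is automatic: by Fubini (exactly as in the proof of Proposition~\ref{prop:order}, slicing $\R^{k+1} = \R^k\times\R$), the sets $\{y\in\R^k:\tilde u(y,t,z)\le\lambda\}$ having finite measure for every $(t,z)$ forces $\{(y,t)\in\R^{k+1}:\tilde u(y,t,z)\le\lambda\}$ to have $\sigma$-finite measure in $t$; but actually the cleaner route is simply to note that the statement is phrased ``under the assumption of Theorem~\ref{thm:inf1}'', and that assumption at level $k$ does \emph{not} by itself guarantee the level-$k+1$ hypothesis — so I would instead invoke the monotonicity of operators directly without passing through Corollary~\ref{cor:inf} twice, using Corollary~\ref{cor:inf} only to rewrite $\F_k^s u(x_0)$, and handle $\F_{k+1}^s u(x_0)$ via the general lower bound $\F_{k+1}^s u(x_0)\le \F_k^s u(x_0)$ which holds unconditionally for admissible $u$.

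Concretely, the steps in order are: (1) recall $\F_k^s u(x_0) = \Delta^s\tilde u_{*,k}(0)$ from Corollary~\ref{cor:inf}, valid under the standing hypothesis; (2) recall from Corollary~\ref{cor:order} that $\F_{k+1}^s u(x_0)\le \F_k^s u(x_0)$; (3) if additionally the level-$(k+1)$ finiteness holds, apply Corollary~\ref{cor:inf} once more to get $\F_{k+1}^s u(x_0)=\Delta^s\tilde u_{*,k+1}(0)$, and conclude; otherwise note that the corollary as stated only needs the identification that is available. In the write-up I expect the honest and shortest version is: both identities hold because the level-$k$ finiteness hypothesis of Theorem~\ref{thm:inf1} transfers upward to level $k+1$ (each $(k+1)$-dimensional slice is a countable union of $k$-dimensional slices over a rational grid in the extra variable, hence, combined with $\tilde u \ge 0$ and $\tilde u(0)=0$ plus local $C^{1,1}$-control near $0$, the relevant sublevel sets are finite), so both applications of Corollary~\ref{cor:inf} are legitimate and the inequality follows from Corollary~\ref{cor:order}.

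The main obstacle — really the only one — is this bookkeeping about which hypothesis is in force: making sure that invoking Corollary~\ref{cor:inf} at level $k+1$ is justified under the stated assumptions. Everything else is a formal chain. If the upward transfer of the finiteness hypothesis turns out to be false in general, the fallback is to state the corollary's conclusion only as the inequality $\Delta^s\tilde u_{*,k+1}(0)\le\Delta^s\tilde u_{*,k}(0)$ obtained by combining $\F_{k+1}^s u(x_0)\le\F_k^s u(x_0)=\Delta^s\tilde u_{*,k}(0)$ with the obvious lower bound $\F_{k+1}^s u(x_0) \ge c_{n,s}\int \tilde u_{*,k+1}\,|\cdot|^{-n-2s}$-type estimate — but I expect the clean transfer to work and the proof to be three lines.
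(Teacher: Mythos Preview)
Your approach is exactly the paper's: the corollary is stated with no formal proof beyond the preceding sentence, which says it follows from Corollary~\ref{cor:inf} together with the monotonicity of the family $\{\F_k^s\}_k$ (Corollary~\ref{cor:order})---precisely your chain $\Delta^s\tilde u_{*,k+1}(0)=\F_{k+1}^s u(x_0)\le\F_k^s u(x_0)=\Delta^s\tilde u_{*,k}(0)$. Your extra worry about whether the level-$k$ finiteness hypothesis transfers upward to level $k+1$ is not addressed by the paper, which simply reads ``under the assumption of Theorem~\ref{thm:inf1}'' as applying at both levels; note that your rational-grid argument for the transfer does not actually work (finite $\H^k$-slices can integrate to infinite $\H^{k+1}$-measure), so the clean reading is that the standing assumption is meant for both $k$ and $k+1$.
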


Next we treat the second case. 
\begin{thm}\label{thm:case2}
Suppose that there exists some $\lambda_0> 0$ such that for all $z\in \R^{n-k}$,
$$
\H^k \big ( \{y \in \R^k : \tilde{u}(y,z) \leq \lambda \} \big)
\begin{cases}
< \infty & \hbox{for} \ 0< \lambda < \lambda_0 \\
=\infty & \hbox{for} \ \lambda \geq \lambda_0.
\end{cases}
$$
Then there exists a kernel $K_0\in \K_{k}^s$, with $\supp K_0(\cdot,z) \subseteq \{y \in \R^k : \tilde{u}(y,z) \leq  \lambda_0 \}$, such that 
$$
\F_k^s u (x_0) = c_{n,s} \int_{\R^{n-k}} \int_{\R^k} \tilde{u}(y,z) K_0(y,z)\, dydz.
$$
In particular, the infimum is attained.
\end{thm}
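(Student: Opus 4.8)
The plan is to reduce Case 2 to Case 1 by truncation. For a fixed $z\in\R^{n-k}$, the level sets of $\tilde u(\cdot,z)$ become infinite only at height $\lambda_0$ and above, so the "interesting" part of the rearrangement lives below $\lambda_0$. I would first replace $\tilde u$ by the truncated function $\tilde u^{\lambda_0}(y,z) = \min\{\tilde u(y,z),\lambda_0\}$ restricted to the sublevel set $\Omega_z=\{y\in\R^k:\tilde u(y,z)\le\lambda_0\}$, and consider its $k$-symmetric increasing rearrangement within $\Omega_z$. Since $\H^k(\{y:\tilde u(y,z)\le\lambda\})<\infty$ for every $\lambda<\lambda_0$, the argument of Lemma~\ref{lem:limitinfty} applies to show that $(\tilde u^{\lambda_0})_{*,k}(y,z)$ increases to $\lambda_0$ as $|y|\to\infty$ along $\Omega_z$ — more precisely, the one-variable increasing rearrangement $f_*$ of $f(y)=\tilde u(y,z)$ is finite and strictly below $\lambda_0$ for $t<\H^k(\Omega_z)$ (or for all $t$ if that measure is infinite), and equals $\lambda_0$ afterwards. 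Ryff's theorem (Theorem~\ref{thm:ryff}) then does not directly apply because $\lim_{t\to\infty}f_*(t)=\lambda_0<\infty$; this is the main obstacle, and I address it below.

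Next I would define the candidate kernel $K_0$ exactly as in the proof of Theorem~\ref{thm:inf1}, but only on the slab $\R^k\times\{z\}$ over $\Omega_z$: namely $K_0(y,z)=\big((\omega_k^{-1}\sigma_z(y))^{2/k}+|z|^2\big)^{-\frac{n+2s}{2}}$ for $y\in\Omega_z$ and $K_0(y,z)=0$ otherwise, where $\sigma_z:\Omega_z\to[0,\infty)$ is a measure preserving map with $\tilde u(y,z)=f_*(\sigma_z(y))$ on $\Omega_z$. The verification that $K_0\in\K_k^s$ is the same slab-by-slab computation as before: for $r>|z|$ the level set $\{y:K_0(y,z)>r^{-n-2s}\}$ is $\sigma_z^{-1}\big((0,\omega_k(r^2-|z|^2)^{k/2})\big)$, which has $\H^k$-measure $\omega_k(r^2-|z|^2)^{k/2}$, provided that $\omega_k(r^2-|z|^2)^{k/2}\le\H^k(\Omega_z)$; when $\omega_k(r^2-|z|^2)^{k/2}>\H^k(\Omega_z)$ the preimage is all of $\Omega_z$ and one needs the complementary piece — here I would note that on $\R^k\setminus\Omega_z$ the function $\tilde u(\cdot,z)$ exceeds $\lambda_0$, so $f_*(t)=\lambda_0$ for $t\ge\H^k(\Omega_z)$, and the required extra level-set volume can be supplied by extending $\sigma_z$ (or by a direct geometric argument) so that $K_0(\cdot,z)$ still has the volume-preserving property \eqref{eq:kernels}. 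The optimality inequality $\F_k^su(x_0)\le c_{n,s}\int\int\tilde u K_0$ is immediate since $K_0\in\K_k^s$; for the reverse inequality I would run Proposition~\ref{prop:HLI} (Hardy--Littlewood) on each slab exactly as in Theorem~\ref{thm:inf1}: for any $K\in\K_k^s$,
\begin{equation*}
\int_{\R^k}\tilde u(y,z)K(y,z)\,dy\ \ge\ \int_0^\infty f_*(t)g^*(t)\,dt\ =\ \int_{\R^k}\tilde u(y,z)K_0(y,z)\,dy,
\end{equation*}
where the last equality uses Lemma~\ref{lem:mp}, the identity $\tilde u(y,z)=f_*(\sigma_z(y))$, and the fact that $g^*(\sigma_z(y))=K^{*,k}(\tilde y,z)=(|\tilde y|^2+|z|^2)^{-\frac{n+2s}{2}}=K_0(y,z)$ with $\omega_k|\tilde y|^k=\sigma_z(y)$, via \eqref{rem:ker}.

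The one genuinely new point compared with Theorem~\ref{thm:inf1} is handling the failure of the hypothesis $\lim_{t\to\infty}f_*(t)=\infty$ in Ryff's theorem. I expect this to be the crux of the argument. The fix is that Ryff's theorem only needs $\lim_{t\to\infty}f^*(t)=0$ in the decreasing case (equivalently, it produces $\sigma$ on $\supp(f)$); for the increasing rearrangement one works with the complement. Concretely, I would apply Ryff's theorem to the bounded nonnegative function $w(y)=\lambda_0-\tilde u^{\lambda_0}(y,z)$, whose decreasing rearrangement tends to $0$ (because the level sets of $w$ above any positive height are exactly the finite-measure sublevel sets of $\tilde u$), obtaining a measure preserving $\tau_z$ on $\supp(w)=\{y:\tilde u(y,z)<\lambda_0\}$ with $w=w^*\circ\tau_z$; translating back gives the desired $\sigma_z$ on that set, and on the null-for-$w$ set $\{y:\tilde u(y,z)=\lambda_0\}$ one assigns $\sigma_z$ arbitrarily (measure preservingly onto the tail $[\H^k(\{\tilde u<\lambda_0\}),\infty)$, which is legitimate since that set has infinite or cofinite measure as needed). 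With $\sigma_z$ in hand, measurability in $z$ of $(y,z)\mapsto\sigma_z(y)$ — hence of $K_0$ — follows as in the previous proof, and assembling the slabs and integrating in $z$ yields the stated formula with the infimum attained at $K_0$, whose support over each slab is contained in $\{y:\tilde u(y,z)\le\lambda_0\}$ by construction.
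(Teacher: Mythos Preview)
Your approach is essentially correct and is genuinely different from the paper's. The paper does not apply Ryff's theorem directly; instead it runs two nested limiting arguments. First it exhausts $\{\tilde u(\cdot,z)<\lambda_0\}$ by the finite-measure sets $A_j=\{\tilde u(\cdot,z)\le\lambda_0-1/j\}$, applies Ryff on each $A_j$ to get maps $\sigma_j$, proves $\sigma_{j+1}\le\sigma_j$ on $A_j$, and passes to the limit $\sigma_\infty$. This handles the subcase $\H^k(\{\tilde u<\lambda_0\})=\infty$. For the remaining subcase $\H^k(\{\tilde u<\lambda_0\})<\infty$ (so $\H^k(\{\tilde u=\lambda_0\})=\infty$), the paper perturbs $\tilde u$ on $\{\tilde u=\lambda_0\}$ to a function $v_\vep$ that breaks the plateau, applies the first subcase to $v_\vep$, and then lets $\vep\to 0$ using monotonicity. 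Your trick of applying Ryff to $w=\lambda_0-\min(\tilde u,\lambda_0)$, whose decreasing rearrangement tends to $0$ precisely because the sublevel sets below $\lambda_0$ are finite, bypasses both limits and is cleaner; it also makes the identity $f_*=(\tilde u^{\lambda_0})_*$ transparent.

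Two small points to tighten. First, your worry about the case $\omega_k(r^2-|z|^2)^{k/2}>\H^k(\Omega_z)$ never occurs: by hypothesis $\H^k(\Omega_z)=\H^k(\{\tilde u\le\lambda_0\})=\infty$, so the level-set computation for $K_0\in\K_k^s$ goes through for every $r$ without any ``complementary piece.'' Second, the extension of $\sigma_z$ to $\{\tilde u=\lambda_0\}$ is not always possible as you describe: if $\H^k(\{\tilde u<\lambda_0\})=\infty$, then $\tau_z$ already maps onto all of $[0,\infty)$, so there is no tail $[\H^k(\{\tilde u<\lambda_0\}),\infty)$ to receive $\{\tilde u=\lambda_0\}$ when that set has positive measure. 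The fix is simply to set $K_0=0$ on $\{\tilde u=\lambda_0\}$ in that subcase; the kernel is still in $\K_k^s$ (the level sets are preimages under $\tau_z$ of bounded intervals, and $\tau_z$ is measure preserving onto $[0,\infty)$), and the optimality identity $\int_0^\infty f_*g^*=\int_{\R^k}\tilde u\,K_0$ still holds because both sides equal $\int_{\{\tilde u<\lambda_0\}}\tilde u\,K_0$. In the other subcase $\H^k(\{\tilde u<\lambda_0\})<\infty$, your extension onto $[M,\infty)$ is fine, since $\{\tilde u=\lambda_0\}$ then has infinite $\H^k$-measure and standard measure-isomorphism gives the required map.
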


\begin{proof}
Fix $z\in \R^{n-k}$. 
For $j\geq 1,$ define the set
\begin{align*}
 A_j(z) &= \big\{y \in \R^k : \tilde{u}(y,z) \leq \lambda_0 - \tfrac{1}{j} \big\}. 
\end{align*}
For simplicity, we drop the notation of $z$. We have that $\H^k(A_j) < \infty$, $A_j \subseteq A_{j+1}$, and 
$$A_\infty=\bigcup_{j=1}^\infty A_j =  \big\{y \in \R^k : \tilde{u}(y,z) < \lambda_0 \big\}.$$  

Observe that if $K\in \K_{k}^s$, then
\begin{align*}
\H^{k}\big(\{ y \in \R^k : K(y,z) > 0\}\big) = \lim_{r\to 0} \H^{k}\big(\{ y \in \R^k : K(y,z) > r\}\big)= \infty.
\end{align*}
Hence, we need to distinguish two cases:\medskip

\textbf{Case 2.1.} Assume that $\H^k(A_\infty) = \infty$. 
Let $K\in \K_{k}^s$ and $v_j = \tilde{u}\chi_{A_j}$. By Proposition~\ref{prop:HLI}, 
\begin{align*}
\int_{A_j} \tilde{u}(y,z) K(y,z)\, dy &= \int_{\R^k} v_j(y,z) K(y,z)\, dy \geq \int_0^\infty (v_j)_{*}(t) K^*(t) \, dt.
\end{align*}
By Lemma~\ref{lem:mp}, for any measure preserving $\sigma: \R^k \to [0,\infty)$, it follows that
$$
 \int_0^\infty (v_j)_{*}(t) K^*(t) \, dt =  \int_{\R^k} (v_j)_{*}(\sigma(y)) K^*(\sigma(y)) \, dy.
$$
By Ryff's theorem (Theorem~\ref{thm:ryff}), there exists $\sigma_j: A_j \to [0, \H^k(A_j)]$ measure preserving such that $v_j= (v_j)_{*} \circ \sigma_j$ in $A_j$. Therefore, 
\begin{equation} \label{eq:case1}
\int_{A_j} \tilde{u}(y,z) K(y,z)\, dy \geq \int_{A_j} \tilde{u}(y,z) K^*(\sigma_j(y)) \, dy.
\end{equation}
We claim that $\sigma_{j+1}(y) \leq \sigma_j(y)$, for all $y\in A_j$. Indeed, since $A_j \subseteq A_{j+1}$,  we have 
\begin{align*}
\begin{cases}
v_j(y) = v_{j+1}(y), & \hbox{for all}~y\in A_j \\
v_j(y) \leq v_{j+1}(y), & \hbox{for all}~y\in A_{j+1}\setminus A_j.
\end{cases}
\end{align*}
In particular, for all $y\in A_j$,
$$
 (v_{j+1})_*(\sigma_{j+1}(y)) = (v_j)_*(\sigma_j(y)) \leq (v_{j+1})_*(\sigma_j(y)). 
$$
Since $ (v_{j+1})_*$ is monotone increasing, we must have
$$\sigma_{j+1}(y) \leq \sigma_j(y), \quad\hbox{for all} ~y\in A_j.$$
Therefore, there exists $\sigma_\infty : A_\infty \to [0,\infty)$ measure preserving such that
$$
\sigma_\infty (y) = \lim_{j\to \infty} \sigma_j(y).
$$
Define the kernel $K_0$ as 
$$
K_0(y,z) =
\big( (\omega_k^{-1} \sigma_\infty (y))^{k/2} +|z|^2 \big)^{-\frac{n+2s}{2}}\chi_{A_\infty}(y).
$$
Since $\H^k(A_\infty)=\infty$, then $K_0\in \K_{k}^s$.
Furthermore, note that $\supp K_0( \cdot, z) = \overline{A_\infty}=\{y \in \R^k : \tilde{u}(y,z) \leq  \lambda_0 \}$ and
$K_0(y,z)=K_0^*(\sigma_\infty(y))$, for all $y\in A_\infty$.  Then by Fatou's lemma, Lemma~\ref{lem:repker}, and \eqref{eq:case1}, we get
\begin{align*}
\int_{\R^k} \tilde{u}(y,z) K_0(y,z)\, dy & =  \int_{A_\infty } \tilde{u}(y,z) K_0^*(\sigma_\infty(y))\, dy\\
&\leq \liminf_{j\to \infty} \int_{A_j } \tilde{u}(y,z) K_0^*(\sigma_j (y))\, dy\\
& = \liminf_{j\to \infty} \int_{A_j } \tilde{u}(y,z) K^*(\sigma_j (y))\, dy\\
& \leq \int_{\R^k}  \tilde{u}(y,z) K(y,z)\, dy,
\end{align*}
for any $K \in \K_{k}^s$. Integrating over $z$ and taking the infimum over all kernels $K$, we conclude the result.

\medskip
\textbf{Case 2.2.} Assume that $\H^k(A_\infty )< \infty$. Set $A=  \{y \in \R^k : \tilde{u}(y,z) =\lambda_0\}$. Then
\begin{equation} \label{eq:inftymeasure}
\H^k (A)= \infty,
\end{equation}
since 
$\{ y \in \R^k : \tilde u (y,z) \leq \lambda_0\} = A_\infty \cup A$.
Fix $\vep>0$ and define
\begin{align*}
 v_\vep(y,z) &= \tilde{u}(y,z)\chi_{A_\infty}(y) + \max\{ \lambda_0, (\lambda_0+\vep) \phi(y,z)\}\chi_{A}(y),
\end{align*}
with $\phi(y,z)=1- e^{-|y|^2-|z|^2}$.
Note that $0 < \phi\leq 1$, $\phi(y,z)\to 1$, as $|(y,z)|\to \infty$, and $\phi(y,z)\approx |y|^2+|z|^2$, as $|(y,z)|\to 0$. 
Also, $\{v_\vep\}_{\vep>0}$ is a monotone increasing sequence, and
\begin{align} \label{eq:limv}
\lim_{\vep\to 0} v_\vep(y,z) &= \tilde{u}(y,z)\chi_{A_\infty}(y) + \max\big\{ \lambda_0, \lim_{\vep\to 0}(\lambda_0+\vep) \phi(y,z)\big\}\chi_{A}(y)\\\nonumber
&= \tilde{u}(y,z)\chi_{A_\infty}(y) + \max\{ \lambda_0 , \lambda_0 \phi(y,z)\}\chi_{A}(y) 
=\tilde{u}(y,z) \chi_{A_\infty \cup A}(y). \nonumber
\end{align}

For any $j \in \N$, with $j > 1/\vep$, consider the set
$$
B_j^\vep (z) = \big\{ y \in \R^k : v_\vep(y,z) \leq \lambda_0+\vep -\tfrac{1}{j} \big\}.
$$
Then $B_{j}^\vep \subseteq B_{j+1}^\vep$ and $B_\infty^\vep= \bigcup_{j>1/\vep} B_j^\vep  = \{ y \in \R^k :  v_\vep(y,z) < \lambda_0+\vep \}$. 
Moreover, we have
\begin{align} \label{eq:measureBj}
\H^k(B_j^\vep)  & \leq \H^k(A_\infty) + \H^k \big(\big\{ y \in A : \max\{ \lambda_0, (\lambda_0+\vep) \phi(y,z)\} \leq \lambda_0+\vep -\tfrac{1}{j} \big\}\big).
\end{align}
Choose $R>0$ large enough (depending on $\vep$, $j$, $\lambda_0$, and $z$) so that 
$$
(\lambda_0+\vep) e^{-R^2-|z|^2} <  \tfrac{1}{j}.
$$
Then $(\lambda_0 + \vep) \phi(y,z) > \lambda_0 +\vep - \tfrac{1}{j}>\lambda_0$, for all $y \in B_R^c$, and thus,
\begin{align}\label{eq:meascompact}
\H^k \big(\big\{ y \in A \cap B_R^c  : \max\{ \lambda_0, (\lambda_0+\vep) \phi(y,z)\} \leq \lambda_0+\vep -\tfrac{1}{j} \big\}\big) = 0.
\end{align}
By \eqref{eq:measureBj} and \eqref{eq:meascompact}, we see that
$$
\H^k (B_j^\vep (z)) \leq \H^k(A_\infty) + \H^k(A\cap B_R) < \infty.
$$
Furthermore, $A \subseteq B_\infty^\vep$, and thus, by \eqref{eq:inftymeasure}, we get
\begin{align*}
\H^k(B_\infty^\vep)
&\geq   \H^k\big( A) = \infty.
\end{align*}
In particular, $v_\vep$ satisfies the assumptions of Case 2.1, so there exists $K_\vep \in \K_{k}^s$,
\begin{equation} \label{eq:kerapprox}
K_\vep(y,z) =\big( (\omega_k^{-1} \sigma_\vep (y))^{k/2} +|z|^2 \big)^{-\frac{n+2s}{2}}\chi_{B_\infty^\vep}(y),
\end{equation}
with $\sigma_\vep : B_\infty^\vep \to [0,\infty)$ measure preserving, depending on $v_\vep$, 
such that
\begin{equation} \label{eq:infapprox}
\inf_{K\in \K_{k}^s} \int_{\R^{n-k}} \int_{\R^k} v_\vep (y,z) K(y,z)\, dydz = \int_{\R^{n-k}} \int_{\R^k} v_\vep (y,z) K_\vep(y,z)\, dydz.
\end{equation}

Finally, we need to pass to the limit. First, we prove that  $\{\sigma_\vep\}_{\vep>0}$ is monotone decreasing. Indeed, let 
$
V_\vep = \{ y \in \R^k : v_\vep(y,z)=\tilde u(y,z)\}.
$
In particular, $A_\infty \subseteq V_\vep \subseteq A_\infty \cup A.$
Also, $V_{\vep_2}\subseteq V_{\vep_1}$, for any $\vep_1 \leq \vep_2$. By Ryff's theorem, recall that
\begin{align*}
v_{\vep_1}(y,z)= (v_{\vep_1})_* (\sigma_{\vep_1} (y)) \quad\hbox{and}\quad
v_{\vep_2}(y,z)= (v_{\vep_2})_* (\sigma_{\vep_2} (y)).
\end{align*}
Since $v_{\vep_2}(y,z)=v_{\vep_1}(y,z)$, for all $y\in V_{\vep_2}$, and $v_{\vep_1}(y,z)\leq v_{\vep_2}(y,z)$, for all $y\in \R^k$, we see that
\begin{align*}
(v_{\vep_2})_* (\sigma_{\vep_2} (y))=(v_{\vep_1})_* (\sigma_{\vep_1} (y)) \leq (v_{\vep_2})_* (\sigma_{\vep_1} (y)),
\quad \hbox{for all}~y\in V_{\vep_2}.
\end{align*}
Since $(v_{\vep_2})_*$ is monotone increasing, we must have that $\sigma_{\vep_2} (y)\leq \sigma_{\vep_1} (y)$, for all $y\in V_{\vep_2}$.
Hence, there exists $\sigma_0 : B_\infty \to[0,\infty) $ measure preserving such that
\begin{align*}
\sigma_0(y)=\lim_{\vep\to0} \sigma_\vep (y) ,
\end{align*}
where $B_\infty =\bigcap_{\vep>0} B_\infty^\vep = \{ y \in \R^k : \tilde u(y,z) \leq \lambda_0\}=A_\infty \cup A$. In particular, the sequence of kernels $\{K_\vep\}_{\vep>0}$ is monotone decreasing.
Define 
\begin{equation} \label{eq:kerlim}
K_0(y,z) = \lim_{\vep\to0} K_\vep(y,z).
\end{equation}
By \eqref{eq:kerapprox} and \eqref{eq:kerlim}, we have
$$
K_0(y,z)= \big( (\omega_k^{-1} \sigma_0 (y))^{k/2} +|z|^2 \big)^{-\frac{n+2s}{2}}\chi_{B_\infty}(y).
$$
Moreover, $K_0 \in \K_{k}^s$ since $K_\vep \in \K_{k}^s$, and for any $r>0$, it follows that
 $$
 \H^{k}\big(D_0(r)) = \lim_{\vep \to 0} \H^k(D_\vep(r)),
$$
where  $D_\vep(r) = \{ y \in \R^k : K_\vep (y,z) > r^{-(n+2s)}\}$.

 Finally, using \eqref{eq:limv}, \eqref{eq:infapprox}, \eqref{eq:kerlim}, and the monotone convergence theorem, we get
\begin{align*}
 \int_{\R^{n-k}}\int_{\R^k} \tilde u(y,z) K_0(y,z)\, dydz 
 & =  \int_{\R^{n-k}}\int_{\R^k} \lim_{\vep\to 0} \big( v_\vep (y,z) K_\vep(y,z)\big) \, dydz \\
 & = \lim_{\vep\to 0} \int_{\R^{n-k}}\int_{\R^k}  v_\vep (y,z) K_\vep(y,z) \, dydz \\
&= \lim_{\vep\to 0}\inf_{K\in \K_{k}^s} \int_{\R^{n-k}} \int_{\R^k} v_\vep (y,z) K(y,z)\, dydz \\
&\leq \inf_{K\in \K_{k}^s}  \int_{\R^{n-k}} \int_{\R^k} \Big( \lim_{\vep\to 0} v_\vep (y,z) \Big) K(y,z)\, dydz \\
&= \inf_{K\in \K_{k}^s}  \int_{\R^{n-k}} \int_{\R^k} \tilde u(y,z) \big( K(y,z) \chi_{A\infty \cup A}(y) \big)\, dydz\\
&= \inf_{K\in \K_{k}^s} \int_{\R^{n-k}}\int_{\R^k} \tilde u(y,z)  K(y,z) \, dydz.
\end{align*}
The last equality follows from the following observation: since 
$$
\tilde\K_{k}^s =\big \{ K \in \K_{k}^s : \supp K(\cdot,z) \subseteq A_\infty\cup A \big\} \subseteq  \K_{k}^s,
$$
then the infimum over all kernels in $ \K_{k}^s$ is less than or equal to the infimum over $\tilde  \K_{k}^s$.
Moreover, the reverse inequality holds trivially.
\end{proof}

Finally, we deal with the third case, that is, when all of the level sets of $\tilde u$ have infinite measure. 
In particular, notice that 
$$
\tilde{u}_{*,k}(x)=0, \quad \hbox{for all}~x\in \R^n.
$$   
This is the only case where the infimum is not attained. 
Indeed, we prove in the following theorem  that the infimum is equal to zero.

\begin{thm}
Suppose that for all $\lambda >0$ and $z\in \R^{n-k}$,
$$\H^k \big ( \{y \in \R^k : \tilde{u}(y,z) \leq \lambda \} \big) = \infty.$$
Then $\F_k^s u(x_0)=0$.
\end{thm}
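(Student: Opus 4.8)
The plan is to establish the two inequalities $\F_k^s u(x_0)\ge 0$ and $\F_k^s u(x_0)\le 0$ separately. The first is immediate: since $\tilde u\ge 0$ by $(P_2)$ and every admissible kernel is nonnegative, $\F_k^s u(x_0)\ge 0$ — this is already contained in Proposition~\ref{prop:finite}. For the reverse inequality I would approximate $u$ from above by functions falling into the previously treated cases and then pass to the limit.

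Concretely, for $\epsilon>0$ set $u_\epsilon(x)=u(x)+\epsilon\big(1-e^{-|x-x_0|^2}\big)$. Then $u_\epsilon\in C^0(\Rn)\cap C^{1,1}(x_0)$, it satisfies $(P_1)$ (the perturbation is bounded) and $(P_2)$, and $\widetilde{u_\epsilon}=\tilde u+\epsilon\phi=:\tilde u_\epsilon$ with $\phi(x)=1-e^{-|x|^2}$, because $\phi(0)=|\nabla\phi(0)|=0$. I would then check that $\tilde u_\epsilon$ satisfies the hypothesis of Theorem~\ref{thm:case2} with $\lambda_0=\epsilon$: for $0<\lambda<\epsilon$, $\{y:\tilde u_\epsilon(y,z)\le\lambda\}\subseteq\{\epsilon\phi(\cdot,z)\le\lambda\}$ is a ball (or empty), hence has finite $\H^k$-measure; whereas for $\lambda>\epsilon$, $\{y:\tilde u_\epsilon(y,z)\le\lambda\}\supseteq\{y:\tilde u(y,z)\le\lambda-\epsilon\}$ has infinite $\H^k$-measure by assumption. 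By Theorem~\ref{thm:case2} and the computation in the proof of Corollary~\ref{cor:inf} this gives
\begin{equation*}
\F_k^s u_\epsilon(x_0)=\Delta^s(\tilde u_\epsilon)_{*,k}(0)=c_{n,s}\int_{\Rn}\frac{(\tilde u_\epsilon)_{*,k}(x)}{|x|^{n+2s}}\,dx ,
\end{equation*}
where we used $(\tilde u_\epsilon)_{*,k}(0)=0$ (because $\tilde u_\epsilon(0)=0$). Since $\tilde u\le\tilde u_\epsilon$ pointwise and kernels are nonnegative, $0\le\F_k^su(x_0)\le\F_k^su_\epsilon(x_0)$.

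It then remains to let $\epsilon\to0$, which rests on two elementary bounds on the increasing rearrangement. First, $(\tilde u_\epsilon)_{*,k}\le 2\epsilon$ on all of $\Rn$, since $\{y:\tilde u_\epsilon(y,z)\le2\epsilon\}\supseteq\{y:\tilde u(y,z)\le\epsilon\}$ has infinite $\H^k$-measure for every $z$. Second, writing $L=[u]_{C^{1,1}(x_0)}$ and $\rho$ as in Definition~\ref{def:c11}, one has $\tilde u_\epsilon(x)\le (L+\epsilon)|x|^2$ on $B_\rho$ (using $1-e^{-t}\le t$), and a direct inspection of level sets upgrades this to $(\tilde u_\epsilon)_{*,k}(x)\le (L+\epsilon)|x|^2$ on $B_\rho$. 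Splitting $\int_{\Rn}(\tilde u_\epsilon)_{*,k}(x)|x|^{-n-2s}dx$ into the region $B_{r_\epsilon}$ with $r_\epsilon=(2\epsilon/(L+1))^{1/2}$, where the quadratic bound is used, and $\Rn\setminus B_{r_\epsilon}$, where the bound $2\epsilon$ is used, and invoking $\int_0^{r_\epsilon}r^{1-2s}\,dr<\infty$ and $\int_{r_\epsilon}^\infty r^{-1-2s}\,dr<\infty$ (valid for $s\in(0,1)$), one obtains $\F_k^su_\epsilon(x_0)\le C(n,s,\rho,L)\,\epsilon^{1-s}\to0$. Hence $\F_k^su(x_0)=0$.

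The step I expect to require the most care is the identity $\F_k^su_\epsilon(x_0)=\Delta^s(\tilde u_\epsilon)_{*,k}(0)$. Its lower bound $\F_k^su_\epsilon(x_0)\ge\Delta^s(\tilde u_\epsilon)_{*,k}(0)$ is cheap — apply the lower Hardy--Littlewood inequality of Proposition~\ref{prop:HLI} slicewise and use that $K^{*,k}(x)=|x|^{-n-2s}$ for every $K\in\K_k^s$ — but the matching upper bound needs the infimum for $\tilde u_\epsilon$ to be attained at a kernel realizing this resonant pairing, which is precisely the kernel constructed in Theorem~\ref{thm:case2}. Finally, in the borderline situation where the sublevel set of $\tilde u_\epsilon$ at the exact level $\lambda=\epsilon$ has finite $\H^k$-measure (so that $\tilde u_\epsilon$ lies strictly between Cases 1 and 2), one instead repeats the double-limit argument of Case~2.2 for $\tilde u_\epsilon$, which only uses that the sublevel sets below $\epsilon$ are finite while there is infinite room above; this covers all cases.
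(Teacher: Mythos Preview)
Your approach is correct and genuinely different from the paper's. The paper proves $\F_k^s u(x_0)\le 0$ by a direct, constructive argument: for each $\vep>0$ it builds an admissible kernel $K_\vep\in\K_k^s$ from scratch, using a dyadic covering to place each level set $\{2^{j(n+2s)}<K_\vep(\cdot,z)\le 2^{(j+1)(n+2s)}\}$ inside the (infinite-measure) sublevel set $\{\tilde u(\cdot,z)<\vep\,2^{-j(n+2s)}e^{-|z|^2}\}$, and then checks that $\int\tilde u\,K_\vep\lesssim\vep$. Your route instead reduces Case~3 to Case~2 by the perturbation $\tilde u_\epsilon=\tilde u+\epsilon(1-e^{-|x|^2})$, invokes Theorem~\ref{thm:case2} to realize the infimum, and finishes with the clean rearrangement estimate $(\tilde u_\epsilon)_{*,k}\le\min\{(L+\epsilon)|x|^2,\,2\epsilon\}$, yielding $\F_k^s u(x_0)\le\F_k^s u_\epsilon(x_0)\le C\epsilon^{1-s}$.

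Two technical points are worth flagging, both of which you already identified. First, the identity $\F_k^s u_\epsilon(x_0)=\Delta^s(\tilde u_\epsilon)_{*,k}(0)$ is not the literal statement of Corollary~\ref{cor:inf}, which is written under the hypotheses of Theorem~\ref{thm:inf1}; one must check that the kernel $K_0$ produced in Theorem~\ref{thm:case2} actually saturates the slice-wise Hardy--Littlewood lower bound. This does follow from the construction (in Case~2.1, $\sigma_\infty$ is the limit of Ryff maps and $(v_j)_*$ agrees with $(\tilde u_\epsilon)_{z,*}$ on $[0,\H^k(A_j))$, so one passes to the limit), but it deserves one explicit sentence. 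Second, the precise hypothesis of Theorem~\ref{thm:case2} asks that the sublevel set be infinite \emph{at} $\lambda_0$, which may fail for $\tilde u_\epsilon$ at $\lambda_0=\epsilon$; your remark that the Case~2.2 mechanism still applies (finite below, infinite strictly above) is correct and easily made rigorous. The trade-off: the paper's proof is self-contained and does not lean on Theorem~\ref{thm:case2}, whereas your argument is shorter and conceptually cleaner once the earlier machinery is in place.
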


\begin{proof}
From $(P_2)$, we have that $\F_k^s u(x_0)\geq 0$. To prove the reverse inequality, it is enough to find a sequence of  kernels $\{K_\vep\}_{\vep>0} \subset \K_{k}^s$ such that
\begin{equation} \label{eq:limit0}
\liminf_{\vep\to0}\int_{\R^{n-k}} \int_{\R^k}\tilde{u}(y,z) K_\vep(y,z) \, dydz = 0.
\end{equation}

Fix $\vep>0$ and $z\in \R^{n-k}$. For any $j\geq 0$, we define the set
\begin{align*}
U_j \equiv U_j (z) = \big\{ y \in \R^k : \tilde{u}(y,z) < \vep 2^{-j(n+2s)} e^{-|z|^2} \big\}.
\end{align*}
Note that $U_{j+1} \subseteq U_{j}$. Also, by assumption,  with $\lambda= \vep 2^{-j(1+2s)} e^{-|z|^2}$, we have that 
$$
\H^k(U_j) = \infty, \quad \hbox{for all}~ j\geq0.
$$ 
We will construct $K_\vep \in \K_{k}^s$ by describing first where to locate each level set of the form:
\begin{align*}
A_{-1} \equiv A_{-1}(z) &= \big\{ y \in \R^k : 0< K_\vep (y,z) \leq 1\big\}\\
A_j\equiv A_j(z) &= \big\{ y\in \R^k :  2^{j(n+2s)} < K_\vep (y,z) \leq 2^{(j+1)(n+2s)}\big\}, \quad \hbox{for}~j\geq0.
\end{align*}
Recall that $K\in \K_{k}^s $ if for all $r>0$, we have 
\begin{align*} \label{eq:kernels3}
\H^{k}\big(\{ y \in \R^k : K(y,z) > r^{-(n+2s)}\}\big) = \H^{k}\big(\{ y \in \R^k : (|y|^2+|z|^2)^{-\frac{n+2s}{2}} > r^{-(n+2s)}\}\big).
\end{align*}
In view of this definition, we define the sets
\begin{align*}
B_{-1} \equiv B_{-1}(z) &= \big\{ y \in \R^k : 0< (|y|^2+|z|^2)^{-\frac{n+2s}{2}}  \leq 1\big\}\\
B_j \equiv B_j(z) &= \big\{ y\in \R^k :  2^{j(n+2s)} < (|y|^2+|z|^2)^{-\frac{n+2s}{2}}  \leq 2^{(j+1)(n+2s)}\big\}, \quad \hbox{for}~j\geq0.
\end{align*}
Note that 
\begin{equation*}
\begin{cases}
\H^k(A_{-1}) = \H^k(B_{-1}) = \infty &\\
\H^k(A_j)   = \H^k(B_j)<\infty, & \hbox{for all}~j\geq0.
\end{cases}
\end{equation*}
More precisely, for $j\geq 0$, if $|z| < 2^{-(j+1)}< 2^{-j}$, then 
\begin{align*}
\H^k(A_j) & = \H^k\big(B_{(2^{-2j}-|z|^2)^{1/2}}\big) - \H^k\big(B_{(2^{-2(j+1)}-|z|^2)^{1/2}}\big)\\
&= \omega_k (2^{-2j}-|z|^2)^{k/2}- \omega_k (2^{-2(j+1)}-|z|^2)^{k/2}\leq \omega_k 2^{-kj}.
\end{align*}
If $2^{-(j+1)} \leq |z| < 2^{-j}$, then
\begin{align*}
\H^k(A_j) & = \H^k\big(B_{(2^{-2j}-|z|^2)^{1/2}}\big) =  \omega_k (2^{-2j}-|z|^2)^{k/2} \leq \omega_k (\tfrac{3}{4})^{k/2} 2^{-kj}.
\end{align*}
If $|z| \geq 2^{-j} > 2^{-(j+1)}$, then 
\begin{equation*} 
\H^k(A_j)=0.
\end{equation*}
Therefore, $\H^k(A_j)\leq c 2^{-kj}$, where $c>0$ only depends on $k$. It follows that
\begin{equation} \label{eq:finunion}
\H^k \Big( \bigcup_{j=0}^\infty A_j \Big) = \sum_{j=0}^\infty \H^k(A_j) \leq c \sum_{j=0}^\infty 2^{-jk} < \infty.
\end{equation}

For any $i \geq 0$, let $\D_i$ be the collection of all dyadic closed cubes of the form
$$
[m 2^{-i}, (m+1) 2^{-i}] ^k = [m 2^{-i}, (m+1) 2^{-i}] \times \cdots \times [m 2^{-i}, (m+1) 2^{-i}].
$$
Note that if $Q \in \D_i$, then $l(Q)=2^{-i}$, where $l(Q)$ denotes the side length of the cube $Q$.
For any $j\geq 0$, since $U_j$ is an open set, by a standard covering argument, we have that there exists a family of dyadic cubes $\F_j$ such that
$$
U_j = \bigcup_{Q\in \F_j} Q
$$
satisfying the following properties:
\begin{enumerate}
\item For any $Q\in \F_j$, there exists some $i\geq0$ such that $Q\in \D_i$.
\item $\Int(Q) \cap \Int(\tilde Q) = \emptyset$, for any  $ Q , \tilde Q \in \F_j$, with $Q \neq \tilde Q$.
\item If $x \in Q \in \F_j$, then $Q$ is the maximal dyadic cube contained in $U_j$ that contains $x$.
\end{enumerate} 
Analogously, for the sets $B_j$, with $j\geq -1$, there exists a family of dyadic cubes $\tilde \F_j$ satisfying properties $(1)-(3)$ such that
$$
\Int(B_j) = \bigcup_{Q\in \tilde \F_j} Q.
$$
Note that $\tilde\F_j \cap \tilde\F_{j+1} =\emptyset$ since $B_j \cap B_{j+1} = \emptyset$. 

We will construct the sets $A_j$ by properly translating the dyadic cubes partitioning the sets $B_j$ into $U_j$. 
In particular, we will  prove that 
$$
\begin{cases}
A_0 = T_0(B_0) \subset U_0\\
A_j = T_j(B_j)  \subset U_j \setminus \bigcup_{i=0}^{j-1} A_i, \quad \hbox{for all}~ j\geq 1\\
A_{-1} = T_{-1}(B_{-1}) \subset U_0  \setminus \bigcup_{i=0}^{\infty} A_i,
\end{cases}
$$
for some \textit{translation} mappings $T_j : \tilde \F_j  \to \F_j $ to be determined.

We start with the case $j=0$. For any $i\geq 0$, denote by 
\begin{align*}
m_i &=\H^0(\F_0 \cap \D_i) \quad \hbox{and} \quad n_i = \H^0( \tilde \F_0 \cap \D_i),
\end{align*}
where $\H^0(E)$ is equal to the cardinal of the set $E$. Note that $m_i, n_i \in \Z^+ \cup \{\infty\} $.

We will recursively place $B_0$ into $U_0$.
First, fix $i=0$. If $m_0 \geq n_0$, then for any $\tilde Q \in \tilde \F_0 \cap \D_0$, there exists some $\tau \in \R^k$ and some $Q\in
\F_0 \cap \D_0$, such that $Q = \tilde Q + \tau$. Then define 
\begin{equation} \label{eq:T0}
\begin{array}{rrcl} 
T_0 : & \tilde \F_0 \cap \D_0  & \to & \F_0 \cap \D_0\\
  & \tilde Q & \mapsto & Q.
\end{array}
\end{equation}
Moreover, we can define $T_0$  one-to-one since $m_0 \geq n_0$, and we can always choose a different $Q$ for each $ \tilde Q$. Note that there are $p_0$ cubes in $\F_0 \cap D_0$, with $p_0=m_0-n_0$, that have not been used. Hence, to all of these cubes,   
divide each side in half, so that each cube gives rise to $2^k$ cubes with side length $2^{-1}$. Call this collection of new cubes $\mathcal{Q} = \{Q_l\}_{l=1}^{2^{kp_0}} \subset \D_1$, and  add them to the family $\F_0 \cap \D_1$. Namely, we replace $ \F_0\cap \D_1$ by $(\F_0\cap \D_1) \cup \mathcal{Q}$.

If $m_0 < n_0$, then take $q_0$ cubes in $\tilde{\F}_0 \cap \D_0$, with  $q_0 = n_0 - m_0$, and divide each side in half. Call this collection of new cubes $\tilde{\mathcal{Q}} = \{\tilde Q_l\}_{l=1}^{2^{kq_0}} \subset \D_1$. 
Then, we replace $\tilde \F_0$ by $\hat{\F}_0$, where
\begin{align*}
\hat{\F}_0 \cap \D_0 & = ( \tilde \F_0 \setminus  \tilde{\mathcal{Q}}) \cap \D_0\\
\hat{\F}_0 \cap \D_1 & = ( \tilde \F_0 \cup \tilde{\mathcal{Q}}) \cap \D_1\\
\hat{\F}_0 \cap \D_i & = \tilde {\F}_0 \cap \D_i, \quad \hbox{for all}~ i \geq 2.
\end{align*}
If $\hat n_0 = \H^0( \hat \F_0 \cap \D_0)$, then  $m_0 = \hat n_0$. Hence, by the same argument as in the previous case, we find $T_0$ as in \eqref{eq:T0}. 
For $i\geq 1$, we can repeat the same process until we run out of cubes from $\tilde \F_0$ (or the modified family). We know the process will end since $\H^k(B_0) <\H^k(U_0)$.
When this happens, we will have constructed a one-to-one mapping $T_0 : \tilde \F_0 \to \F_0$, since $\tilde \F_0 = \bigcup_{i=0}^\infty \tilde\F_0\cap \D_i$ and $ \F_0 = \bigcup_{i=0}^\infty \F_0\cap \D_i$. Then define
$$
A_0 = T_0 (B_0) \subset U_0.
$$

Iterating this process, we find a sequence of translation mappings $\{T_j\}_{j= 0}^\infty$, with $T_j : \tilde F_j \to \F_j$, and a sequence of disjoint sets $\{A_j\}_{j=0}^\infty$ such that
$$
A_j = T_j(B_j)  \subset U_j \setminus {\textstyle \bigcup \limits_{i=0}^{j-1}} A_i.
$$

The case $j=-1$ is somewhat special since $\H^k(A_{-1})=\H^k(B_{-1})=\infty$.  
We will see that
$$
A_{-1} = T_{-1}(B_{-1}) \subset U_0  \setminus {\textstyle \bigcup \limits_{i=0}^{\infty}} A_i.
$$
This is possible because $\H^k(U_0  \setminus \bigcup_{i=0}^{\infty} A_i)=\infty$ using \eqref{eq:finunion}. Indeed, we can write
$$
\big\{ y \in \R^k : 0<K_\vep(y,z) \leq 1 \big\} = \bigcup_{j=0}^\infty \big\{ 2^{-(j+1)(n+2s)} < K_\vep(y,z) \leq 2^{-j(n+2s)}\big \}.
$$
Now call 
$$
C_j = \big\{ 2^{-(j+1)(n+2s)} < (|y|^2+|z|^2)^{-\frac{n+2s}{2}}  \leq 2^{-j(n+2s)} \big\}, \quad \hbox{for}~j\geq 0.
$$
Then  $B_{-1} =  \bigcup_{j=0}^\infty C_j$, with $\H^k(C_j)< \infty$, for all $j\geq 0$.
Hence, instead of partitioning all of $B_{-1}$ into dyadic cubes, we partition each of its disjoint components $C_j$. Arguing as before, we place them into  $U_0  \setminus \bigcup_{i=0}^{\infty} A_i$  recursively, according to the following scheme:
\begin{equation*}
\begin{cases}
T_{-1}^0(C_0) \subset U_0 \setminus {\textstyle \bigcup \limits_{i=0}^{\infty}} A_i & \\
T_{-1}^j (C_j) \subset U_0 \setminus \Big({\textstyle \bigcup \limits_{i=0}^{\infty}} A_i \cup  {\textstyle \bigcup \limits_{i=0}^{j-1}} C_i\Big), & \hbox{for}~j\geq 1,\\
\end{cases}
\end{equation*}
where $T_{-1}^j$ is defined as before.
 At the end of this process, we find a translation map $T_{-1}$ with $T_{-1}(Q) =  T_{-1}^j(Q)$, for $Q\in C_j$. Therefore, we define
 $$
 A_{-1} = T_{-1}(B_{-1}).
 $$

Lastly, let $y\in \R^k = A_{-1} \cup \big( \bigcup_{j=0}^\infty A_j \big)$. In particular, there exists some $j\geq{-1}$ such that $y \in A_j$. Furthermore, recall that $A_j = T_j(B_j)$, where $T_j$ is a one-to-one and onto translation map. Hence, there exists a unique $w\in B_j$  such that $y= T_j(w)=w+\tau$, for some $\tau \in \R^k$. Let $T_z: \R^k \to \R^k$ be given by $T_z(y)=w$. Note that $T_z$ is measure preserving. Then we define the kernel $K_\vep$ as
 $$
 K_{\vep}(y,z) = \big(|T_z(y)|^2 +|z|^2\big)^{-\frac{n+2s}{2}}. 
 $$

\medskip
We have that
\begin{align*}
\int_{\R^k}\tilde{u}(y,z) K_\vep(y,z) \, dy 
&= \int_{A_{-1}}\tilde{u}(y,z) K_\vep(y,z) \, dy + \sum_{j=0}^\infty \int_{A_j}\tilde{u}(y,z) K_\vep(y,z) \, dy \equiv {\rm I+II}.
\end{align*}
For I, we use that $ \tilde{u}(y,z)\leq \vep e^{-|z|^2}$, since $A_{-1} \subset U_0$. Then by Lemma~\ref{lem:repker} and Lemma~\ref{lem:mp}:
\begin{align*}
{\rm I} &\leq \vep e^{-|z|^2} \int_{\{  0<K_\vep(y,z)\leq 1\}} K_\vep(y,z)\, dy\\ 
&= \vep e^{-|z|^2} \int_{\{ 0<|\sigma_z(y)|^{-n-2s} \leq 1\}} |\sigma_z(y)|^{-n-2s}\, dy \\
& = \vep e^{-|z|^2} \int_{\{ |y|\geq 1\}} |y|^{-n-2s}\, dy = C\vep e^{-|z|^2},  
\end{align*}
where $C>0$ depends only on $n$ and $s$.
For II, we use that $ \tilde{u}(y,z)\leq \vep 2^{-j(n+2s)} e^{-|z|^2}$, since $A_j \subset U_j$ and $K_\vep (y,z) \leq 2^{(j+1)(n+2s)}$ in $A_j$, by definition. Then 
\begin{align*}
{\rm II}  &\leq   \vep e^{-|z|^2} \sum_{j=0}^\infty  2^{-j(n+2s)}  2^{(j+1)(n+2s)} \H^k(A_j)\\
& \leq c\vep e^{-|z|^2} 2^{n+2s} \sum_{j=0}^\infty  2^{-kj} \leq C \vep  e^{-|z|^2},
\end{align*}
where $C>0$ depends only on $n$, $s$, and $k$.

Integrating over $z$, we see that
\begin{align*}
\int_{\R^{n-k}} \int_{\R^k}\tilde{u}(y,z) K_\vep(y,z) \, dydz  
& \leq C \vep \int_{\R^{n-k}} e^{-|z|^2}\, dz \leq \tilde C\vep.
\end{align*}
Letting $\vep\to0$, we conclude \eqref{eq:limit0}.
\end{proof}

%%%%%%%%%%%%%%%%%%%%%%%%%%%%%%%%
\subsection{Limit as $s\to 1$} \label{sec:limit}
%%%%%%%%%%%%%%%%%%%%%%%%%%%%%%%%

Let $u\in C^2(\Rn)$.
We define $\MA_k u$ as the Monge-Amp\`{e}re operator acting on $u$, with respect to the first $k$ variables, that is,
\begin{equation*}\label{eq:kMA}
\MA_k u(x) = k\Big( \det\big((u_{ij}(x))_{1\leq i,j\leq k}\big) \Big)^{1/k},
\end{equation*}
with $D^2 u(x)=(u_{ij}(x))_{1\leq i,j\leq n}$. We define $\Delta_{n-k}u $ as the Laplacian of $u$, with respect to the last $n-k$ variables, that is,
\begin{equation*}\label{eq:kL}
\Delta_{n-k} u(x) = \sum_{i=k+1}^n  u_{ii}(x).
\end{equation*}
Then under some \textit{special} conditions, it holds that 
\begin{equation} \label{eq:limit}
\lim_{s\to1} \F_k^s u (x) =\MA_k u(x) + \Delta_{n-k} u(x). 
\end{equation}
In particular, the family $\{\F_k^s\}_{k=1}^{n-1}$ can be understood as nonlocal analogs of concave second order elliptic operators, which are decomposed into a Monge-Amp\`{e}re operator restricted to $\R^k$ and a Laplacian restricted to $\R^{n-k}$. 

Indeed, by Corollary~\ref{cor:inf}, we have $\F_k^s u (x) =  \Delta^s \tilde{u}_{*,k}(0)$.
Since the $k$-symmetric rearrangement does not depend on $s$ and $\Delta^s \to \Delta$, as $s\to 1,$ passing to the limit we see that
$$
\lim_{s\to1} \F_k^s u (x) =\Delta \tilde{u}_{*,k}(0).
$$
Suppose that
$
\tilde{u}_{*,k}(y,z)= \tilde u(\varphi_z^{-1}(y),z),
$
where $\varphi_z : \R^k \to \R^k$ is an invertible measure preserving transformation, with $\varphi_z(0)=0$, and
\begin{equation*} 
\omega_k |\varphi_z(y)|^{1/k} = \sigma_z(y).
\end{equation*}
Recall that $\sigma_z$ is given in Theorem~\ref{thm:inf1} (see also Remark~\ref{rem:mp}). In this case,
\begin{equation} \label{eq:decomp}
\Delta \tilde{u}_{*,k}(0)= \Delta_y \tilde u(\varphi_z^{-1}(y),z) + \Delta_z  \tilde u(\varphi_z^{-1}(y),z) \big|_{(y,z)=(0,0)}.
\end{equation}
For the first term, we use that
$$
\MA_k u(x) = \inf_{\psi \in \Psi} \Delta(\tilde u \circ \psi)(0)  , 
$$
where $\Psi = \{ \psi: \R^k \to \R^k~ \hbox{measure preserving such that}~\psi(0)=0 \big\}$,
and the fact that the infimum is attained when $\tilde{u}\circ \psi$ is a radially symmetric increasing function \cite{CS}. Hence,\begin{equation} \label{eq:first}
\Delta_y \tilde u(\varphi_z^{-1}(y),z) \big |_{(y,z)=(0,0)} = \MA_k u(x).
\end{equation}
For the second term, call $\phi(y,z)= (\varphi_z^{-1}(y),z)$ and compute:
\begin{align*}
 \Delta_z (\tilde{u}\circ \phi)(0) &= \tr\big(D_z\phi(0)^T D_z^2 \tilde{u}( \phi(0)) D_z\phi(0)  \big)
 + \nabla_z \tilde{u}(\phi(0))^T \cdot \Delta_z \phi(0).
\end{align*}
Recall that  $\phi(0)=0$ and
$
\tilde{u}(y,z) = u(x+(y,z)) -u(x) - \nabla_y u(x) \cdot y- \nabla_z u(x) \cdot z.$
 Then 
$$
\nabla_z \tilde{u}(\phi(0))=0,  \quad D_z^2 \tilde{u}(\phi(0)) = D_z^2{u}(x), \quad \hbox{and} \quad D_z\phi(0) = (0,I_{n-k}),
$$
where $I_{n-k}$ denotes the identity matrix in $M_{n-k}.$ Therefore,
\begin{equation}\label{eq:second}
\Delta_z  \tilde u(\varphi_z^{-1}(y),z) \big|_{(y,z)=(0,0)} = \Delta_z (\tilde{u}\circ \phi)(0) = \tr \big( D_z^2 u(x) \big) = \Delta_{n-k} u(x).
\end{equation}
Combining \eqref{eq:decomp}, \eqref{eq:first} and \eqref{eq:second}, we conclude \eqref{eq:limit}.

%%%%%%%%%%%%%%%%%%%%%%%%%%%%%%%%
\subsection{Connection to optimal transport} \label{sec:OT}
%%%%%%%%%%%%%%%%%%%%%%%%%%%%%%%%

In Corollary~\ref{cor:inf}, we obtained a representation of the function $\F_k^s u$ in terms of the $k$-symmetric increasing rearrangement. Using this representation, we find an equivalent expression of $\F_k^s u$ that can be understood from the viewpoint of optimal transport.

\begin{thm} \label{thm:repOT}
Suppose we are under the assumptions of Theorem~\ref{thm:inf1}. Then for any  $z\in \R^{n-k}$, $z\neq 0$, there exists an invertible map $\varphi_z : \R^k\to \R^k$ such that
\begin{equation}\label{eq:repOT}
\F_k^s u(x) = c_{n,s} \int_{\R^{n-k}} \int_{\R^k} \frac{\tilde{u}( \varphi_z^{-1}(y),z)}{\big(|y|^2+|z|^2\big)^\frac{n+2s}{2}}\, dydz.
\end{equation}
Moreover, if $\sigma_z: \R^k \to [0,\infty)$ is the Ryff's map given in Theorem~\ref{thm:inf1}, then $\varphi_z$ is measure preserving if and only if
\begin{equation} \label{eq:relation}
\omega_k |\varphi_z(y)|^k  =\sigma_z(y), \qquad \hbox{for}~ a.e.~y\in \R^k.
\end{equation}
\end{thm}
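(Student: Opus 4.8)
\emph{Proof strategy.} The plan is to reduce the statement to the rearrangement representation of $\F_k^s u$ from Corollary~\ref{cor:inf}, and then to realize that rearrangement, slice by slice, through an invertible map. Fix $x=x_0$ and, for $z\in\R^{n-k}$, write $f_z:=\tilde u(\cdot,z):\R^k\to[0,\infty)$ and $f_*:=(f_z)_*$ for its increasing rearrangement on $[0,\infty)$, as in the proof of Theorem~\ref{thm:inf1}, so that $\tilde u_{*,k}(y,z)=f_*(\omega_k|y|^k)$ by Definition~\ref{def:rearrangement}. Corollary~\ref{cor:inf} gives
\[
\F_k^s u(x)=c_{n,s}\int_{\R^{n-k}}\int_{\R^k}\frac{\tilde u_{*,k}(y,z)}{(|y|^2+|z|^2)^{\frac{n+2s}{2}}}\,dy\,dz,
\]
so it is enough to construct, for each fixed $z\neq0$, a Borel bijection $\varphi_z:\R^k\to\R^k$ (defined up to a null set) with
\[
\tilde u\big(\varphi_z^{-1}(y),z\big)=\tilde u_{*,k}(y,z)\qquad\text{for a.e.\ }y\in\R^k;
\]
substituting this pointwise identity into the previous display yields \eqref{eq:repOT}. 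No Jacobian factor enters, precisely because $\varphi_z$ is \emph{not} required to be measure preserving — only to carry $f_z$ onto its radial rearrangement.

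To build $\varphi_z$ I would transport the sublevel sets of $f_z$ onto concentric balls. Set $\mu_z(\lambda)=\H^k(\{y\in\R^k:f_z(y)\le\lambda\})$, finite for every $\lambda>0$ by the hypothesis of Theorem~\ref{thm:inf1}, nondecreasing and right continuous; then $\{\tilde u_{*,k}(\cdot,z)\le\lambda\}$ is the ball of radius $r_z(\lambda):=(\omega_k^{-1}\mu_z(\lambda))^{1/k}$, and $f_z$ is equimeasurable with $\tilde u_{*,k}(\cdot,z)$. For $\lambda_1<\lambda_2$, the shell $\{\lambda_1<f_z\le\lambda_2\}$ and the annulus $\{r_z(\lambda_1)<|y|\le r_z(\lambda_2)\}$ are nonatomic, $\sigma$-finite measure spaces with the same total mass $\mu_z(\lambda_2)-\mu_z(\lambda_1)$, hence isomorphic modulo null sets by a standard isomorphism theorem for measure spaces; I would moreover choose this isomorphism so that it respects the coarea foliation, mapping $\H^{k-1}$-a.e.\ level set $\{f_z=\lambda\}$ onto the sphere $\partial B_{r_z(\lambda)}$ measure preservingly. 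Patching these maps over a countable family of levels dense in the range of $f_z$, and using the nesting of sublevel sets to glue them consistently, produces an a.e.-invertible Borel map $\varphi_z$ with $\varphi_z(\{f_z\le\lambda\})=B_{r_z(\lambda)}$, hence $f_z(\varphi_z^{-1}(y))=\tilde u_{*,k}(y,z)$ a.e.\ after monotone approximation. I expect this construction to be the main obstacle: one must perform the patching measurably and consistently, the delicate point being the levels where $\mu_z$ jumps (equivalently, where $f_z$ is constant on a set of positive measure), at which the matching of a level set of $f_z$ with the corresponding thickened sphere is non-canonical and requires a measurable selection — exactly the freedom recorded in the remark following Theorem~\ref{thm:inf1}.

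For the final equivalence, recall from \eqref{eq:recoveru} that $\tilde u(y,z)=f_*(\sigma_z(y))$ for Ryff's measure preserving map $\sigma_z:\R^k\to[0,\infty)$; comparing with the identity $\tilde u(\varphi_z^{-1}(y),z)=\tilde u_{*,k}(y,z)=f_*(\omega_k|y|^k)$ just established, we get $f_*\big(\sigma_z(\varphi_z^{-1}(y))\big)=f_*(\omega_k|y|^k)$ for a.e.\ $y$. On the portion of $\R^k$ where $f_z$ is not locally constant, $f_*$ is injective on the relevant range, which forces $\sigma_z(\varphi_z^{-1}(y))=\omega_k|y|^k$, i.e.\ $\sigma_z(w)=\omega_k|\varphi_z(w)|^k$ with $w=\varphi_z^{-1}(y)$; this gives the ``only if'' direction. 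Conversely, if \eqref{eq:relation} holds then $\sigma_z=\Pi\circ\varphi_z$ with $\Pi(w):=\omega_k|w|^k$, and since $\Pi$ (whose preimage of $[0,t)$ is the ball of measure $t$) and $\sigma_z$ are both measure preserving, while $\varphi_z$ was built to be measure preserving along the coarea fibers, checking the defining identity of Definition~\ref{def:MP} on balls and annuli — where $\Pi\circ\varphi_z=\sigma_z$ transfers the measure — shows $\varphi_z$ is measure preserving. Throughout, the only genuinely hard step is the slicewise construction of $\varphi_z$ in the second paragraph; the rest is bookkeeping with rearrangements and with Lemma~\ref{lem:mp}.
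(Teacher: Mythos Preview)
Your route diverges from the paper's, and the divergence is where the gaps appear. The paper does \emph{not} try to build $\varphi_z$ by hand from level-set isomorphisms; it fixes $z\neq 0$, sets
\[
f_z(y)=\big(|y|^2+|z|^2\big)^{-\frac{n+2s}{2}},\qquad g_z(y)=\big((\omega_k^{-1}\sigma_z(y))^{2/k}+|z|^2\big)^{-\frac{n+2s}{2}},
\]
checks $\|f_z\|_{L^1}=\|g_z\|_{L^1}$ (since $\sigma_z$ is measure preserving), and invokes Brenier--McCann (Theorem~\ref{thm:MC}) to produce a convex $\psi_z$ with $\nabla\psi_z$ pushing $f_z\,dy$ forward to $g_z\,dy$; then $\varphi_z:=(\nabla\psi_z)^{-1}$. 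Testing the push-forward identity against $h(y)=\tilde u(y,z)$ and against $h(y)=(|\varphi_z(y)|^2+|z|^2)^{(n+2s)/2}\chi_E(y)$ gives, respectively, \eqref{eq:repOT} (via Theorem~\ref{thm:inf1}) and the formula
\[
\H^k\big(\varphi_z^{-1}(E)\big)=\int_E \frac{\big(|\varphi_z(y)|^2+|z|^2\big)^{\frac{n+2s}{2}}}{\big((\omega_k^{-1}\sigma_z(y))^{2/k}+|z|^2\big)^{\frac{n+2s}{2}}}\,dy,
\]
from which the equivalence with \eqref{eq:relation} is immediate. This is the ``connection to optimal transport'' the section advertises.

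Your argument has two concrete problems. First, your construction of $\varphi_z$ via measure-space isomorphisms on shells is, by design, measure preserving; but then the map $y\mapsto\omega_k|\varphi_z(y)|^k$ is \emph{a} Ryff map for $\tilde u(\cdot,z)$, not necessarily \emph{the} $\sigma_z$ fixed in Theorem~\ref{thm:inf1} (Ryff maps are not unique when $\tilde u$ has plateaus). So for your $\varphi_z$ the equivalence degenerates: $\varphi_z$ is always measure preserving, yet \eqref{eq:relation} need not hold for the given $\sigma_z$ --- the ``if and only if'' is lost. Relatedly, Remark~\ref{rem:mp} flags existence of such a measure preserving $\varphi_z$ tied to the given $\sigma_z$ as open, so your sketch cannot be producing it in general. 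Second, your equivalence argument is circular in the ``only if'' direction: the identity $f_*\big(\sigma_z(\varphi_z^{-1}(y))\big)=f_*(\omega_k|y|^k)$ follows from your construction alone and never uses the measure preserving hypothesis, and where $f_*$ is constant it does not force $\sigma_z\circ\varphi_z^{-1}=\omega_k|\cdot|^k$. In the ``if'' direction, $\sigma_z=\Pi\circ\varphi_z$ with $\Pi$ and $\sigma_z$ measure preserving only yields $\H^k(\varphi_z^{-1}(A))=\H^k(A)$ for radially symmetric $A=\Pi^{-1}(E)$, which is not enough. The Brenier--McCann route sidesteps all of this by giving a single push-forward identity that both produces $\varphi_z$ and encodes exactly the Jacobian information needed for the equivalence.
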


The key tool to prove Theorem~\ref{thm:repOT} is Brenier--McCann's theorem, a very well-known result in the theory of optimal transport  \cite{Brenier,MC}. We state it here in the form that we will use it.

\begin{thm} \label{thm:MC}
Let $f, g \in L^1(\R^k)$. Assume that 
$$\|f\|_{L^1(\R^k)}=\|g\|_{L^1(\R^k)}.$$ 
Then there exists a convex function $\psi: \R^k \to \R$ whose gradient $\nabla \psi$ pushes forward $f\, dy$ to $g \, dy$. Namely, for any measurable function $h$ in $\R^k$,
\begin{equation}\label{eq:MC3}
\int_{\R^k} h(y)g(y)\, dy = \int_{\R^k} h(\nabla \psi(y))f(y) \, dy.
\end{equation}
Moreover, $\nabla \psi: \R^k \to \R^k$ is invertible and unique.
\end{thm}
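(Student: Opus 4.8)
The plan is to prove this as the existence-and-uniqueness theorem for the quadratic-cost optimal transport map (Brenier \cite{Brenier}), in the form due to McCann \cite{MC} that dispenses with moment conditions. One normalizes first: if the common mass $m:=\|f\|_{L^1(\R^k)}=\|g\|_{L^1(\R^k)}$ vanishes the claim is trivial (take $\psi(y)=\tfrac12|y|^2$); otherwise, replacing $f,g$ by $f/m,g/m$ multiplies both sides of \eqref{eq:MC3} by the same constant, so we may assume $\mu:=f\,dy$ and $\nu:=g\,dy$ are Borel probability measures on $\R^k$. The only structural feature we use is that $\mu$, and likewise $\nu$, is absolutely continuous with respect to Lebesgue measure.

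\emph{Step 1: compactly supported case.} Assume in addition that $\mu$ and $\nu$ are supported in a common ball. Minimizing $\pi\mapsto\int_{\R^k\times\R^k}|x-y|^2\,d\pi(x,y)$ over the weak-$*$ compact set of couplings of $\mu$ and $\nu$ yields a minimizer $\pi_0$. A standard swapping argument shows $\supp\pi_0$ is cyclically monotone; by Rockafellar's theorem it is contained in the subdifferential graph of a lower semicontinuous convex function $\psi:\R^k\to\R\cup\{+\infty\}$. Since $\mu$ is absolutely continuous and convex functions are differentiable Lebesgue-a.e., $\partial\psi(y)=\{\nabla\psi(y)\}$ for $\mu$-a.e.\ $y$, so $\pi_0=(\mathrm{id}\times\nabla\psi)_\#\mu$; this is exactly $(\nabla\psi)_\#\mu=\nu$, that is, \eqref{eq:MC3} for nonnegative Borel $h$, and hence for general measurable $h$ by the usual decomposition.

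\emph{Step 2: general $L^1$ densities.} Without moment bounds the quadratic Kantorovich functional can be identically $+\infty$, so Step 1 must be supplemented by an approximation. Fix balls $B_j\uparrow\R^k$ and set $\mu_j:=\mu(B_j)^{-1}\mu|_{B_j}$, $\nu_j:=\nu(B_j)^{-1}\nu|_{B_j}$, compactly supported probability measures converging weakly to $\mu,\nu$; let $\psi_j$ be the convex potentials of Step 1, so $(\nabla\psi_j)_\#\mu_j=\nu_j$. Normalizing each $\psi_j$ by an additive constant (and, if needed, a fixed translation making their subdifferentials meet a common reference point) renders the family locally uniformly bounded, because the subgradients of $\psi_j$ eventually lie in a fixed ball meeting $\supp\nu$; by the compactness of uniformly bounded families of convex functions a subsequence converges locally uniformly to a finite convex $\psi$, with $\nabla\psi_j\to\nabla\psi$ Lebesgue-a.e., hence $\mu$-a.e. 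Testing $(\nabla\psi_j)_\#\mu_j=\nu_j$ against bounded continuous functions and passing to the limit — using the weak convergence of $\mu_j,\nu_j$ together with the a.e.\ convergence of $\nabla\psi_j$ — gives $(\nabla\psi)_\#\mu=\nu$, i.e.\ \eqref{eq:MC3}.

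\emph{Step 3: invertibility and uniqueness.} Let $\psi^*$ be the Legendre transform of $\psi$, which is convex. Since $\nu$ is absolutely continuous, $\nu$-a.e.\ point is a differentiability point of $\psi^*$, and $(\nabla\psi)_\#\mu=\nu$ then forces $\nabla\psi^*\bigl(\nabla\psi(y)\bigr)=y$ for $\mu$-a.e.\ $y$ and $\nabla\psi\bigl(\nabla\psi^*(x)\bigr)=x$ for $\nu$-a.e.\ $x$; repeating the push-forward argument with $\psi^*$ gives $(\nabla\psi^*)_\#\nu=\mu$, so $\nabla\psi$ is invertible with inverse $\nabla\psi^*$ in the stated sense. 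For uniqueness, if $\nabla\psi_1$ and $\nabla\psi_2$ both push $\mu$ forward to $\nu$, the graph of each lies in a single subdifferential and is therefore monotone, and a monotone map carrying an absolutely continuous measure to a prescribed target is determined outside a $\mu$-null set; hence $\nabla\psi_1=\nabla\psi_2$ $\mu$-a.e., i.e.\ $\psi$ is unique up to an additive constant. The delicate point of the whole argument is Step 2: the $L^1$ hypothesis does not guarantee a finite transport cost, so existence cannot be read off directly from the variational problem and must be extracted by the exhaustion-and-compactness scheme, the real work being to pass to the limit in $(\nabla\psi_j)_\#\mu_j\to(\nabla\psi)_\#\mu$ without moment control.
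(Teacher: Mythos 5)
The paper does not prove this statement at all: it is quoted as the classical Brenier--McCann theorem and justified by the citations \cite{Brenier,MC}, so there is no ``paper proof'' to compare against. Taken on its own terms, your outline follows the standard route (Kantorovich minimizer $\to$ cyclical monotonicity $\to$ Rockafellar in the compact case, then exhaustion), and Steps 1 and 3 are essentially fine modulo the usual caveats ($h$ nonnegative or integrable in \eqref{eq:MC3}; $\psi$ finite and $\nabla\psi$ invertible only up to null sets, which is all that is ever used in Theorem~\ref{thm:repOT}; $f,g\ge 0$ so that $f\,dy$, $g\,dy$ are measures, as they are in the application).

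The genuine gap is the compactness claim in Step 2. You assert that after normalization the $\psi_j$ are locally uniformly bounded ``because the subgradients of $\psi_j$ eventually lie in a fixed ball meeting $\supp\nu$.'' That is not true: $\partial\psi_j$ maps $\supp\mu_j$ into (essentially) $\supp\nu_j\subset B_j$, and $B_j\uparrow\R^k$, so nothing prevents $|\nabla\psi_j|$ from blowing up at a fixed point as $j\to\infty$; tightness of $\nu_j$ only controls the $\mu_j$-measure of the set where the gradient is large, not a pointwise bound on a fixed compact set, and a local equi-Lipschitz bound is exactly what you need for the Arzel\`a--Ascoli step. Without it, the claimed locally uniform limit $\psi$ and the a.e.\ convergence $\nabla\psi_j\to\nabla\psi$ are unjustified. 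The standard repair --- and McCann's actual argument in \cite{MC} --- is to pass to the limit in the \emph{plans} rather than the potentials: the couplings $\pi_j=(\mathrm{id}\times\nabla\psi_j)_{\#}\mu_j$ are tight, a weak-$*$ limit $\pi$ is a coupling of $\mu$ and $\nu$, cyclical monotonicity of the supports survives the limit, and a single application of Rockafellar's theorem to $\supp\pi$ then produces the convex potential $\psi$ for the limiting measures directly. If you want to keep your potential-based scheme, you must first prove the local gradient bound (e.g.\ by combining the push-forward identity with the monotonicity of $\partial\psi_j$ and the positivity of $\mu$ on suitable sets), which is precisely the ``real work'' your last sentence alludes to but does not carry out.
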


In the literature, $\nabla\psi$ is known as the (optimal) transport map.

\begin{proof}[Proof of Theorem~\ref{thm:repOT}]
Fix $z\in \R^{n-k}$, $z\neq 0$, and consider $f_z,g_z\in L^1(\R^k)$ given by
\begin{align*}
f_z(y) =  \big(|y|^{2}+|z|^2\big)^{-\frac{n+2s}{2}}\quad \hbox{and} \quad
g_z(y)& =  \big((\omega_k^{-1}\sigma_z(y))^{2/k}+|z|^2\big)^{-\frac{n+2s}{2}} ,
\end{align*}
where $\sigma_z: \R^k \to [0,\infty)$ is given in Theorem~\ref{thm:inf1}. 
Note that
\begin{align*}
\|f\|_{L^1(\R^k)} &= \int_{\R^k} \big((\omega_k^{-1}\sigma_z(y))^{2/k}+|z|^2\big)^{-\frac{n+2s}{2}} \, dy\\
&= k \omega_k \int_0^\infty \big( r^2 + |z|^2\big)^{-\frac{n+2s}{2}} r^{k-1} \, dr\\
&= \int_{\R^k} \big(|y|^{2}+|z|^2\big)^{-\frac{n+2s}{2}} \, dy = \|g\|_{L^1(\R^k)},
\end{align*}
since $\sigma_z$ is measure preserving.
By Theorem~\ref{thm:MC},
 there exists a convex function $\psi_z: \R^k \to \R$ (depending on $z$) whose gradient $\nabla \psi_z$ pushes forward $f_z\, dy$ to $g_z \, dy$.  Moreover, $\nabla\psi_z$ is invertible and unique. Call $\varphi_z = (\nabla \psi_z)^{-1}$.
Using \eqref{eq:MC3}, with $h(y)=\tilde u(y,z)$, we see that
 \begin{equation} \label{eq:MC2}
\int_{\R^k} \frac{\tilde u(y,z)}{\big((\omega_k^{-1}\sigma_z(y))^{2/k}+|z|^2\big)^{\frac{n+2s}{2}}}\, dy
=\int_{\R^k} \frac{\tilde u(\varphi^{-1}_z(y),z)}{\big(|y|^{2}+|z|^2\big)^{\frac{n+2s}{2}}}\, dy.
\end{equation}
Integrating over $z\in \R^{n-k}$, we obtain \eqref{eq:repOT}.

It remains to show that $\varphi_z$ is measure preserving if and only if \eqref{eq:relation} holds.
Indeed, for any measurable set $E \subset \R^k$, we have
\begin{align*}
\H^k \big( \varphi_z^{-1}(E)\big) 
&= \int_{\varphi_z^{-1}(E)}  dy
= \int_{\varphi_z^{-1}(E)}\frac{\big(|y|^2+|z|^2\big)^\frac{n+2s}{2}}{\big(|y|^2+|z|^2\big)^\frac{n+2s}{2}}\, dy\\
&= \int_{\varphi_z^{-1}(E)} \frac{\big(|\varphi_z(\varphi^{-1}_z (y))|^2+|z|^2\big)^\frac{n+2s}{2}}{\big(|y|^2+|z|^2\big)^\frac{n+2s}{2}}\, dy\\
&= \int_{E } \frac{\big(|\varphi_z(y)|^2+|z|^2\big)^\frac{n+2s}{2}}{\big(\omega_k^{-1}\sigma_z(y)\big)^{2/k}+|z|^2\big)^\frac{n+2s}{2}}\, dy,
\end{align*}
where the last equality follows from \eqref{eq:MC2} with $h(y)=\big(|\varphi_z(y)|^2+|z|^2\big)^\frac{n+2s}{2} \chi_E(y)$. Therefore, 
$$
\H^k \big( \varphi_z^{-1}(E)\big) = \H^{k}(E)
$$
if and only if
$\omega_k |\varphi_z(y)|^k  =\sigma_z(y),$  for $a.e.~y\in \R^k.$

\end{proof}

%%%%%%%%%%%%%%%%%%%%%%%%%%%%%%%%%%%%%
\section{Regularity of $\F_k^s u$} \label{sec:reg}
%%%%%%%%%%%%%%%%%%%%%%%%%%%%%%%%%%%%%

Given $x_0\in \R^n$,  we define the sections 
$$
D_{x_0} u(t) = \big\{ x \in \R^n : u(x)-u(x_0) - (x-x_0) \cdot \nabla u (x_0) \leq t\big\},  \quad \hbox{for}~ t>0.
$$
 Our main regularity result is the following.

\begin{thm} \label{thm:regularity}
Let $s\in (1/2,1)$ and $1\leq k <n$. Let $u\in C^{1,1}(\R^n)$ be convex.
Fix $x_0 \in \R^n$ and $r_0, \vep>0$.  Suppose that $\Lambda=\sup_{x\in B_{r_0}(x_0)} \diam(D_{x} u(\vep)) <\infty$ and
 $M=\sup_{x\in B_{r_0}(x_0)} \F_k^s u(x)<\infty$.
 Then $\F_k^s u \in C^{0,1-s}(\overline{B_{r}(x_0)})$ with $r< \min\{r_0/4, \Lambda, \vep/ (8\Lambda) \}$, and
$$
[\F_k^s]_{C^{0,1-s}(\overline{B_{r}(x_0)})} \leq C_0  [u]_{C^{1,1}(\R^n)}
$$
for some constant $C_0>0$ depending only on $n$, $k$,  $s$, $\vep$, $\Lambda$, and $M$.
\end{thm}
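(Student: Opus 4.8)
The plan is to exploit the representation formula $\F_k^s u(x) = \Delta^s \tilde u_{*,k}^{\,x}(0)$ from Corollary~\ref{cor:inf} (writing $\tilde u^{\,x}$ for the function built from $u$ centered at $x$, and $\tilde u_{*,k}^{\,x}$ for its $k$-symmetric increasing rearrangement), and to estimate the difference $\F_k^s u(x_1) - \F_k^s u(x_2)$ for $x_1, x_2 \in \overline{B_r(x_0)}$ by splitting the fractional-Laplacian integral into a near piece (inside a small ball $B_\delta$) and a far piece (outside $B_\delta$), choosing $\delta \sim |x_1 - x_2|$ at the end. First I would record the basic bounds that make the splitting work: since $u \in C^{1,1}(\R^n)$ is convex, $0 \le \tilde u^{\,x}(y) \le [u]_{C^{1,1}} |y|^2$ for all $y$, and this quadratic bound is inherited by the rearrangement, i.e. $0 \le \tilde u_{*,k}^{\,x}(y) \le C[u]_{C^{1,1}}|y|^2$ on a controlled ball, because the rearrangement preserves the distribution function and hence the sup over level sets; the section hypothesis $\diam(D_x u(\vep)) \le \Lambda$ together with $r < \vep/(8\Lambda)$ guarantees that $\tilde u^{\,x}(y) \le \vep$ for $|y| \le \Lambda$, which is exactly Case~1 of Section~\ref{sec:inf} on the relevant range and lets us use Corollary~\ref{cor:inf}.

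Next I would treat the near piece. For $|y| \le \delta$ we bound $\tilde u_{*,k}^{\,x}(y) \le C[u]_{C^{1,1}}|y|^2$, so
\[
\Big| c_{n,s}\int_{B_\delta} \frac{\tilde u_{*,k}^{\,x}(y)}{|y|^{n+2s}}\,dy \Big| \le C[u]_{C^{1,1}}\, c_{n,s}\int_{B_\delta}\frac{dy}{|y|^{n+2s-2}} = C'[u]_{C^{1,1}}\,\delta^{2-2s},
\]
and this bound holds uniformly for $x = x_1$ and $x = x_2$; hence the contribution of the near piece to the difference is $\lesssim [u]_{C^{1,1}}\delta^{2-2s}$. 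The far piece is where the geometric argument from \cite{CS} enters: I want to compare $\tilde u_{*,k}^{\,x_1}$ and $\tilde u_{*,k}^{\,x_2}$ pointwise on $|y| \ge \delta$. The point is that the level sets $D_{x_i}u(t)$, for $t$ up to $\vep$, have $k$-dimensional slices whose $\H^k$-measures determine the rearrangements, and as $x$ moves a distance $|x_1-x_2|$ these sections move by a comparable amount (using $u \in C^{1,1}$ to control the linearization term $u(x) - u(x_i) - (x-x_i)\cdot\nabla u(x_i)$, whose error in switching base point $x_1 \leftrightarrow x_2$ is $O([u]_{C^{1,1}}|x_1-x_2|\,|y| )$ on the range $|y| \le \Lambda$). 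Translating this into a bound on the difference of the measures of the slices — and then, via monotonicity of the generalized inverse defining the increasing rearrangement, into a pointwise bound $|\tilde u_{*,k}^{\,x_1}(y) - \tilde u_{*,k}^{\,x_2}(y)| \le C[u]_{C^{1,1}}|x_1-x_2|\,(1+|y|)$ on $B_\Lambda$, with the far tail $|y|\ge \Lambda$ handled by $(P_1)$-type integrability of $|u|$ at infinity — gives
\[
\Big| c_{n,s}\int_{\R^n \setminus B_\delta} \frac{\tilde u_{*,k}^{\,x_1}(y) - \tilde u_{*,k}^{\,x_2}(y)}{|y|^{n+2s}}\,dy \Big| \le C[u]_{C^{1,1}}|x_1-x_2|\int_{\R^n\setminus B_\delta}\frac{1+|y|}{|y|^{n+2s}}\,dy \le C''[u]_{C^{1,1}}\,|x_1-x_2|\,\delta^{1-2s}.
\]

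Finally I would optimize: adding the two contributions gives $|\F_k^s u(x_1) - \F_k^s u(x_2)| \le C[u]_{C^{1,1}}(\delta^{2-2s} + |x_1-x_2|\delta^{1-2s})$, and choosing $\delta = |x_1-x_2|$ yields the bound $C[u]_{C^{1,1}}|x_1-x_2|^{2-2s}$. Since $s \in (1/2,1)$ we have $2-2s \in (0,1)$, but we only claim the weaker exponent $1-s$; one can either note $2-2s = 2(1-s) \ge 1-s$ and pass to the smaller exponent on the bounded set $\overline{B_r(x_0)}$, or absorb the improvement. The constant $C_0$ comes out depending on $n,k,s,\vep,\Lambda$ (through the ranges of integration and the switching-base-point estimate) and on $M$ (needed to control the rearrangement, since $\F_k^s u(x) = \Delta^s\tilde u_{*,k}^{\,x}(0) \le M$ bounds the relevant far integral). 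I expect the main obstacle to be the comparison of the two rearrangements on the far region: one must show that a small translation of the base point produces a controlled change in the $\H^k$-measures of \emph{all} the slices of \emph{all} the sublevel sets simultaneously, and then convert that into a pointwise Lipschitz-in-$x$ bound on $\tilde u_{*,k}^{\,x}$ — this is the step that genuinely uses the convexity of $u$, the $C^{1,1}$ bound, and the uniform control on the sections, and it is the nonlocal-geometry heart of the argument, following \cite{CS}.
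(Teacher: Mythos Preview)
Your far-piece estimate breaks down. The integral $\int_{\R^n\setminus B_\delta}\frac{1+|y|}{|y|^{n+2s}}\,dy$ is of order $\delta^{-2s}$, not $\delta^{1-2s}$: the contribution of the ``$1$'' dominates as $\delta\to 0$, so with your choice $\delta=|x_1-x_2|$ the far piece is $\sim|x_1-x_2|^{1-2s}\to\infty$ (recall $s>1/2$). More fundamentally, the pointwise bound $|\tilde u_{*,k}^{\,x_1}(y)-\tilde u_{*,k}^{\,x_2}(y)|\le C[u]_{C^{1,1}}|x_1-x_2|\,(1+|y|)$ is neither proved nor correct in that form; the factor $|y|$ is not available from convexity alone. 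What the section comparison actually gives (this is the paper's Lemma~\ref{lem:lem4.5}, and it is precisely the step you flag as the obstacle) is $D_{x_0}u(t-2\Lambda d)\subset D_{x_1}u(t)$ for $t\le\vep$ and a dilated version for $t>\vep$; after inverting the distribution function this yields only an \emph{additive} shift $\tilde u_{*,k}^{\,x_1}\le \tilde u_{*,k}^{\,x_0}+2\Lambda d$ on $\{\tilde u_{*,k}^{\,x_0}\le\vep\}$ and a \emph{multiplicative} one $\tilde u_{*,k}^{\,x_1}\le(1-2\Lambda d/\vep)^{-1}\tilde u_{*,k}^{\,x_0}$ beyond. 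Integrating the additive $O(\Lambda d)$ bound against $|y|^{-n-2s}$ on $\R^n\setminus B_\delta$ still produces $\Lambda d\,\delta^{-2s}$; balancing against your near piece $[u]_{C^{1,1}}\delta^{2-2s}$ forces $\delta\sim(\Lambda d/[u]_{C^{1,1}})^{1/2}$, not $\delta=d$, and only then do both pieces combine to $O\big([u]_{C^{1,1}}^{s}(\Lambda d)^{1-s}\big)$, with the multiplicative tail contributing the extra term $(\Lambda d/\vep)\F_k^s u(x_0)$ that accounts for the dependence on $M$.

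The paper sidesteps this bookkeeping by passing first to a layer-cake representation (Proposition~\ref{prop:repW}):
\[
\F_k^s u(x)=c_{n,s}\int_0^\infty\!\!\int_{\R^{n-k}}\frac{1}{|z|^{n-k+2s}}\,W\!\Big(\frac{\mu_{x}u(t,z)^{1/k}}{|z|}\Big)\,dz\,dt,
\]
with $W$ decreasing and $\mu_x u(t,z)=\omega_k^{-1}\H^k(\{y:\tilde u_x(y,z)\le t\})$. The section inclusion then reads $\mu_{x_0}u(t-2\Lambda d,z)\le\mu_{x_1}u(t,z)$, which is simply a shift of the outer $t$-integral: the strip $t\in(0,2\Lambda d]$ is bounded using the $C^{1,1}$ lower bound $\mu_x u(t,z)\ge(t-|z|^2)_+^{k/2}$ and yields the $(\Lambda d)^{1-s}$ term, while the rest is absorbed into $\big(1+O(\Lambda d/\vep)\big)\F_k^s u(x_0)$ (Proposition~\ref{prop:lem4.6}). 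The missing idea in your argument is to shift in the \emph{level} variable $t$ rather than attempt a pointwise comparison of the rearrangements in the \emph{space} variable.
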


This theorem will be a consequence of the next proposition.

\begin{prop} \label{prop:lem4.6}
Fix $x_0 \in \R^n$ and $\vep>0$. 
Suppose that $\Lambda= \diam(D_{x_0} u(\vep)) <\infty$ and $[u]_{C^{1,1}(\R^n)}\leq 1$. Then for any $x_1 \in {B_{r}(x_0)}$, with $r\leq \vep/(4\Lambda)$, it holds that
$$
\F_k^s u(x_1) - \F_k^s u(x_0) \leq C \Lambda^{1-s} |x_1-x_0|^{1-s} +  \tfrac{4\Lambda}{\vep} |x_1-x_0| \F_k^s u(x_0)
$$
for some $C>0$ depending only on $n$, $k$, and $s$.
\end{prop}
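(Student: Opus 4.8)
The plan is to compare $\F_k^s u(x_1)$ and $\F_k^s u(x_0)$ by using the representation formula of Theorem~\ref{thm:inf1} (equivalently Corollary~\ref{cor:inf}), which reduces the estimate to a comparison of $\Delta^s$ of two $k$-symmetric rearrangements at the origin. First I would set $\tilde u_i(x) = u(x_i + x) - u(x_i) - x\cdot\nabla u(x_i)$ for $i=0,1$, so that $\F_k^s u(x_i) = \Delta^s (\tilde u_i)_{*,k}(0)$. The key geometric observation is that the second-order remainders $\tilde u_0$ and $\tilde u_1$ are close: since $[u]_{C^{1,1}(\R^n)}\le 1$, a Taylor-type estimate gives $|\tilde u_1(x) - \tilde u_0(x)| \le C(|x_1-x_0|^2 + |x_1-x_0||x|)$, and more importantly the sections satisfy $D_{x_1}u(\vep') \subseteq D_{x_0}u(\vep)$ for an appropriate $\vep'$ comparable to $\vep$, once $|x_1-x_0|\le \vep/(4\Lambda)$. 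This controls the measure of level sets of $\tilde u_1$ in terms of those of $\tilde u_0$, uniformly, which is exactly what is needed to compare the increasing rearrangements.

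Next I would split the integral defining $\Delta^s(\tilde u_1)_{*,k}(0)$ into a near region $B_\rho$ and a far region $\R^n\setminus B_\rho$, with $\rho$ of order $\Lambda$. On the far region, the kernel $|x|^{-n-2s}$ is bounded and smooth, and the difference of the two rearranged functions can be estimated directly using the $L^1$-type control coming from $(P_1)$ together with the measure comparison of sections; this produces the term proportional to $|x_1-x_0|\,\F_k^s u(x_0)$ (the factor $\F_k^s u(x_0)$ enters because the far-field mass of $(\tilde u_0)_{*,k}$ is itself controlled by $\F_k^s u(x_0)$). On the near region, I would use the $C^{1,1}$ bound to get $(\tilde u_i)_{*,k}(x)\le C|x|^2$ near the origin, so that $\int_{B_\rho}|x|^2|x|^{-n-2s}\,dx \le C\rho^{2-2s} = C\Lambda^{2-2s}$; a more careful interpolation between this quadratic bound and the diameter constraint $\diam(D_{x_0}u(\vep))=\Lambda$ yields the sharper exponent $\Lambda^{1-s}|x_1-x_0|^{1-s}$. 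The mechanism here is the same as in \cite{CS}: controlling how much the rearranged graph can tilt when the base point moves, using that level sets live inside a set of diameter $\Lambda$.

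The main obstacle I anticipate is the rearrangement is a nonlinear and nonlocal operation, so $(\tilde u_1 - \tilde u_0)_{*,k}$ is \emph{not} comparable to $(\tilde u_1)_{*,k} - (\tilde u_0)_{*,k}$ in any naive way; one cannot simply subtract. The fix is to avoid differencing the rearrangements and instead difference the \emph{kernels}: by Theorem~\ref{thm:inf1} the infimum for $\tilde u_0$ is attained at an explicit kernel $K_0\in\K_k^s$ built from the Ryff map $\sigma_z$ of $\tilde u_0$, and then $\F_k^s u(x_1) \le c_{n,s}\int \tilde u_1(x) K_0(x)\,dx$ since $K_0$ is admissible. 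Thus $\F_k^s u(x_1) - \F_k^s u(x_0) \le c_{n,s}\int (\tilde u_1 - \tilde u_0)(x) K_0(x)\,dx$, and now $\tilde u_1 - \tilde u_0$ is a genuine pointwise-small function that can be estimated on $B_\rho$ (via $C^{1,1}$) and off $B_\rho$ (via $(P_1)$ and the explicit decay $K_0(x)\le |x|^{-n-2s}$), with the geometry of $D_{x_0}u(\vep)$ entering to bound the support behaviour of $K_0$ and hence to extract $\Lambda^{1-s}|x_1-x_0|^{1-s}$. Keeping track of the constants so they depend only on $n,k,s$ (and not on $\Lambda$ or $\vep$ at this stage, as stated) is the remaining bookkeeping point.
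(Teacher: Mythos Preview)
Your idea of fixing the optimal kernel $K_0\in\K_k^s$ for $\tilde u_0$ and bounding $\F_k^s u(x_1)-\F_k^s u(x_0)\le c_{n,s}\int(\tilde u_1-\tilde u_0)K_0$ is a legitimate first step, but the execution breaks at the point where you assert ``the explicit decay $K_0(x)\le|x|^{-n-2s}$''. This is false: by Lemma~\ref{lem:repker} the kernel $K_0$ only shares the \emph{distribution} of $|x|^{-n-2s}$; there is no pointwise comparison, and in fact $K_0$ is concentrated where $\tilde u_0$ is small, which need not be near the origin in the $y$-variables. Your estimate $|\tilde u_1(x)-\tilde u_0(x)|\le C|d|\,|x|$ then forces you to control $\int |x|\,K_0(x)\,dx$, and already for the radial model $\int_{B_\rho}|x|\,|x|^{-n-2s}\,dx$ diverges when $s>1/2$. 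A near/far split in the $x$-variable is misaligned with the (unknown) support of $K_0$, so you do not recover the exponent $1-s$ in $|x_1-x_0|$, nor the precise multiplicative term $\tfrac{4\Lambda}{\vep}|x_1-x_0|\,\F_k^s u(x_0)$.

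The paper takes a different route: it first passes to the layer-cake representation of Proposition~\ref{prop:repW},
\[
\F_k^s u(x)=c_{n,s}\int_0^\infty\!\!\int_{\R^{n-k}}|z|^{-(n-k+2s)}\,W\!\Big(\tfrac{\mu_x u(t,z)^{1/k}}{|z|}\Big)\,dz\,dt,
\]
with $W$ monotone decreasing and $\mu_x u(t,z)$ the $\H^k$-measure of the $t$-sublevel set of $\tilde u_x(\cdot,z)$. The integral is then split in the \emph{level variable} $t$ into $(0,2\Lambda d]$, $(2\Lambda d,\vep]$, and $(\vep,\infty)$, where $d=|x_1-x_0|$. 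On the first piece one uses only $[u]_{C^{1,1}}\le1$ to get $\mu_{x_1}u(t,z)\ge(t-|z|^2)_+^{k/2}$, and a direct computation gives $C(\Lambda d)^{1-s}$. On the remaining pieces one invokes the section inclusions of Lemma~\ref{lem:lem4.5}: $D_{x_0}u(t-2\Lambda d)\subset D_{x_1}u(t)$ for $t\le\vep$, and $D_{x_0}u\big(t(1-2\Lambda d/\vep)\big)\subset D_{x_1}u(t)$ for $t>\vep$. Since $W$ is decreasing, these inclusions yield an inequality between the integrands at $x_1$ and $x_0$ after a shift or rescaling in $t$, and the rescaling on $(\vep,\infty)$ produces exactly the factor $(1-2\Lambda d/\vep)^{-1}\le 1+\tfrac{4\Lambda d}{\vep}$ in front of $\F_k^s u(x_0)$. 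The essential point you are missing is that comparing \emph{measures of level sets} via Lemma~\ref{lem:lem4.5} replaces any need for pointwise control of the optimal kernel.
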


First, we prove Theorem~\ref{thm:regularity}.

\begin{proof}[Proof of Theorem~\ref{thm:regularity}]
Without loss of generality, we may assume that $[u]_{C^{1,1}(\R^n)} \leq 1$. Otherwise, we consider $u/[u]_{C^{1,1}(\R^n)}$. 
Let $r< \min\{r_0/4, \Lambda, \vep/ (8\Lambda) \}$.
It is enough to show that
\begin{equation}\label{eq:estimate}
[ \F_k^s]_{C^{0,1-s}(\overline{B_{r}(x_0)})}  \leq C_0,
\end{equation}
for some constant $C_0>0$ depending only on $n$, $k$,  $s$, $\vep$, $\Lambda$, and $M$. 

Let $x_1,x_2 \in \overline{B_{r}(x_0)}$. Then $x_2 \in \overline{B_{2r}(x_1)} \subset B_{r_0}(x_0)$, since $4r<r_0$. Moreover, $\diam(D_{x_1}u(\vep))\leq \Lambda <\infty$. Hence, applying Proposition~\ref{prop:lem4.6} to $u$, replacing $B_{r}(x_0)$ by $B_{2r}(x_1)$, we get
\begin{align*}
\F_k^s u(x_2) - \F_k^s u(x_1) 
& \leq C \Lambda^{1-s} |x_2-x_1|^{1-s} +  \tfrac{4\Lambda}{\vep} |x_2-x_1| \F_k^s u(x_1) \leq C_0  |x_2-x_1|^{1-s},
\end{align*}
where $C_0=C \Lambda^{1-s} +  {4\Lambda^{1+s} M}/({\vep 2^s})$. Since $x_1$ and $x_2$ are arbitrary, we conclude \eqref{eq:estimate}.
\end{proof}

Before we prove Proposition~\ref{prop:lem4.6}, we need several preliminary results.

\begin{lem}\label{lem:layercake}
If $f$ is monotone increasing, then
$$
\int_0^\infty f(r) \omega(r) \, dr = \int_0^\infty \int_{\mu_f(t)}^\infty \omega (r)\, dr dt,
$$
with $\mu_f(t) = |\{r>0 : f(r) \leq t\}|$. 
\end{lem}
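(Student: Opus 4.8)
The statement is a layer-cake (Fubini) identity, so the plan is to rewrite the left-hand side using the fundamental theorem of calculus applied to $f$, and then swap the order of integration. First I would assume, as is standard for such identities, that $f:[0,\infty)\to[0,\infty)$ is monotone increasing with $f(0)=0$ (or handle the general case by a translation in $t$), so that for each $r>0$ we may write $f(r)=\int_0^{f(r)} dt = \int_0^\infty \chi_{\{t < f(r)\}}\, dt$. Substituting this into the left-hand side gives
\[
\int_0^\infty f(r)\,\omega(r)\,dr = \int_0^\infty \Big(\int_0^\infty \chi_{\{t<f(r)\}}\,dt\Big)\omega(r)\,dr.
\]

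Next I would apply Tonelli's theorem to interchange the two integrals, which is justified since the integrand $\chi_{\{t<f(r)\}}\,\omega(r)$ is nonnegative and measurable on $(0,\infty)\times(0,\infty)$. This yields
\[
\int_0^\infty f(r)\,\omega(r)\,dr = \int_0^\infty \Big(\int_0^\infty \chi_{\{t<f(r)\}}\,\omega(r)\,dr\Big)dt.
\]
The remaining step is to identify the inner region of integration: for fixed $t$, the set $\{r>0 : t < f(r)\}$ is, by monotonicity of $f$, an interval of the form $(\mu_f(t),\infty)$ (up to a single endpoint of measure zero), where $\mu_f(t) = |\{r>0 : f(r)\le t\}|$ is precisely the left endpoint. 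Indeed, since $f$ is increasing, $\{r : f(r)\le t\}$ is an interval starting at $0$, hence equals $[0,\mu_f(t)]$ or $[0,\mu_f(t))$, and its complement in $(0,\infty)$ is $(\mu_f(t),\infty)$ up to a null set. Therefore $\int_0^\infty \chi_{\{t<f(r)\}}\,\omega(r)\,dr = \int_{\mu_f(t)}^\infty \omega(r)\,dr$, and substituting this back gives the claimed formula.

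The only genuine subtlety — and the step I would be most careful about — is the identification $\{r : f(r) > t\} = (\mu_f(t),\infty)$ modulo a null set; this uses monotonicity crucially (it can fail badly for non-monotone $f$) and requires a small argument about whether the endpoint $r=\mu_f(t)$ satisfies $f(\mu_f(t)) \le t$ or $> t$, which is immaterial for the integral. Everything else is a routine application of Tonelli. I would also note in passing that the identity should be read as holding in $[0,\infty]$, with no integrability hypotheses needed beyond nonnegativity of $\omega$, which is exactly what is used when it is later invoked.
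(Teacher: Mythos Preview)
Your proof is correct and follows essentially the same approach as the paper: both arguments reduce to Fubini/Tonelli together with the equivalence $r>\mu_f(t)\iff t<f(r)$, which is where monotonicity of $f$ enters. The only cosmetic difference is that the paper starts from the right-hand side and swaps the order of integration to reach the left, whereas you go in the opposite direction.
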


\begin{proof}
By Fubini's theorem, we have
$$
\int_0^\infty \int_{\mu_f(t)}^\infty \omega (r)\, dr dt = 
\int_0^\infty \omega(r)  \int_{\{r>\mu_f(t)\}} \, dt dr.
$$
Since $f$ is monotone increasing, then $r>\mu_f(t)$ if and only if $t< f(r)$. Therefore,
$$
\int_{\{r>\mu_f(t)\}} \, dt = \int_0^{f(r)} \, dt = f(r).
$$
\end{proof}

\begin{prop} \label{prop:repW}
Let $x\in \R^n$. Under the assumptions of Corollary~\ref{cor:inf} it holds that
$$
\F_k^s u(x) = c_{n,s} \int_0^\infty \int_{\R^{n-k}} \frac{1}{|z|^{n-k+2s}} W \Big( \frac{\mu_{x} u(t,z)^{1/k}}{|z|}\Big) \, dz dt ,
$$
where $\mu_{x} u (t,z)=\omega_k^{-1} \H^k \big( \{ y \in \R^k :  \tilde{u}_x (y,z) \leq t \} \big)$ and 
\begin{equation} \label{eq:W}
W(\rho) = k \omega_k \int_\rho^\infty \frac{r^{k-1}}{(1+r^2)^{\frac{n+2s}{2}}}\, dr.
\end{equation}
\end{prop}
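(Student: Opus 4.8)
The plan is to start from the representation $\F_k^s u(x) = c_{n,s}\int_{\R^{n-k}}\int_{\R^k}\tilde u_{*,k}(y,z)\,(|y|^2+|z|^2)^{-(n+2s)/2}\,dy\,dz$ furnished by Corollary~\ref{cor:inf}, and to transform the inner integral over $y\in\R^k$ (for a fixed $z\neq 0$) into the stated one-dimensional integral in $t$, using polar coordinates together with the layer-cake identity of Lemma~\ref{lem:layercake}. Since $\{z=0\}$ has measure zero in $\R^{n-k}$, it can be discarded.

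First I would fix $z\neq 0$ and write, by Definition~\ref{def:rearrangement}, $\tilde u_{*,k}(y,z)=(\tilde u_z)_*(\omega_k|y|^k)$ where $\tilde u_z(\cdot)=\tilde u_x(\cdot,z)$; by Remark~\ref{rem:rearrange} this is a radially symmetric, monotone increasing function of $y$. Setting $h(r)=(\tilde u_z)_*(\omega_k r^k)$ and passing to polar coordinates in $\R^k$,
\begin{equation*}
\int_{\R^k}\frac{\tilde u_{*,k}(y,z)}{(|y|^2+|z|^2)^{\frac{n+2s}{2}}}\,dy = \int_0^\infty h(r)\,\omega(r)\,dr, \qquad \omega(r):=\frac{k\omega_k\,r^{k-1}}{(r^2+|z|^2)^{\frac{n+2s}{2}}}.
\end{equation*}
Since $h$ is monotone increasing and nonnegative (this is where $(P_2)$ enters), Lemma~\ref{lem:layercake} applies and yields $\int_0^\infty h(r)\omega(r)\,dr = \int_0^\infty\bigl(\int_{\mu_h(t)}^\infty\omega(r)\,dr\bigr)\,dt$ with $\mu_h(t)=|\{r>0:h(r)\le t\}|$.

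Next I would identify $\mu_h(t)=\mu_x u(t,z)^{1/k}$. Because the one-dimensional increasing rearrangement $(\tilde u_z)_*$ is equimeasurable with the slice $\tilde u_x(\cdot,z)$ on $\R^k$ and is nondecreasing, the set $\{\tau>0:(\tilde u_z)_*(\tau)\le t\}$ is an interval starting at $0$ of length $\H^k(\{y\in\R^k:\tilde u_x(y,z)\le t\})=\omega_k\,\mu_x u(t,z)$; substituting $\tau=\omega_k r^k$ turns it into $\{r>0:h(r)\le t\}=(0,\mu_x u(t,z)^{1/k})$, giving the claim. Then, writing $\rho_0=\mu_x u(t,z)^{1/k}$ and rescaling $r=|z|\rho$,
\begin{equation*}
\int_{\rho_0}^\infty\omega(r)\,dr = \frac{1}{|z|^{n-k+2s}}\,k\omega_k\int_{\rho_0/|z|}^\infty\frac{\rho^{k-1}}{(1+\rho^2)^{\frac{n+2s}{2}}}\,d\rho = \frac{1}{|z|^{n-k+2s}}\,W\!\Big(\frac{\mu_x u(t,z)^{1/k}}{|z|}\Big),
\end{equation*}
by the definition \eqref{eq:W} of $W$. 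Substituting this back, integrating over $z\in\R^{n-k}$, and interchanging the $t$- and $z$-integrals by Tonelli's theorem (the integrand is nonnegative) produces exactly the asserted formula.

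The step I expect to carry the real content is the identification $\mu_h(t)=\mu_x u(t,z)^{1/k}$: it relies on the equimeasurability of $(\tilde u_z)_*$ with $\tilde u_x(\cdot,z)$, which is precisely what forces the sublevel sets of the radially increasing function $\tilde u_{*,k}(\cdot,z)$ to be genuine balls of radius $\mu_x u(t,z)^{1/k}$. Once this is in hand, everything else — the polar-coordinate rewriting, the application of Lemma~\ref{lem:layercake}, the rescaling $r=|z|\rho$ in the kernel, and the final Tonelli interchange — is routine bookkeeping.
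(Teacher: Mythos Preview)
Your proposal is correct and follows essentially the same route as the paper: start from Corollary~\ref{cor:inf}, pass to polar coordinates in $y$, apply the layer-cake identity of Lemma~\ref{lem:layercake} to the radial increasing rearrangement, identify the sublevel measure via equimeasurability, and finish with Fubini/Tonelli. The only cosmetic difference is that the paper performs the rescaling $r\mapsto |z|r$ \emph{before} applying Lemma~\ref{lem:layercake} (so that $W$ appears directly), whereas you apply the lemma first and rescale afterward; the two orderings are interchangeable.
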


\begin{proof}
By Corollary~\ref{cor:inf}, we have that
\begin{align*}
\F_k^s u(x) &= \Delta^s \tilde u_{*,k}(0)
= c_{n,s} \int_{\R^{n-k}} \frac{1}{|z|^{n+2s}} \Big ( \int_{\R^k} \frac{\tilde u_{*,k}(y,z)}{\big( ||z|^{-1} y|^2+1\big)^{\frac{n+2s}{2}}} \, dy\Big) \, dz \\
&=  c_{n,s} \int_{\R^{n-k}} \frac{1}{|z|^{n-k+2s}} \Big(k \omega_k \int_0^\infty  v(|z| r,z) \frac{r^{k-1}}{( r^2+1)^{\frac{n+2s}{2}}} \, dr\Big) \, dz, 
\end{align*}
where $v(r,z)= \tilde u_{*,k} (y,z)$ for $|y|=r$. 

Next we apply Lemma~\ref{lem:layercake} to $f(r)=v(|z| r,z)$ and $\omega(r)=  k \omega_k r^{k-1}(r^2+1)^{-\frac{n+2s}{2}}$. Note that since $v$ is the $k$-symmetric increasing rearrangement of $\tilde u$, we have
\begin{align*}
\mu_f(t)
&=  \tfrac{1}{|z|} |\{ r>0: v(r,z)<t\}| = \tfrac{\omega_k^{-1/k}}{|z|} \H^k \big( \{ y \in \R^k : \tilde u (y,z) < t\}\big)^{1/k}= \tfrac{1}{|z|}\mu_{x} u(t,z)^{1/k}.
\end{align*}
Therefore,
\begin{align*}
 k \omega_k \int_0^\infty  v(|z| r,z) \frac{r^{k-1}}{( r^2+1)^{\frac{n+2s}{2}}} \, dr
 &=  \int_0^\infty  \Big(k \omega_k \int_{\mu_{x} u(t,z)^{1/k}/|z|}^\infty \frac{r^{k-1}}{( r^2+1)^{\frac{n+2s}{2}}} \, dr\Big)dt\\
& = \int_0^\infty W \Big( \frac{\mu_{x} u(t,z)^{1/k}}{|z|}\Big) \, dt, 
\end{align*}
where $W$ is given in \eqref{eq:W}.
By Fubini's theorem, we conclude that
$$
\F_k^s u(x) = c_{n,s} \int_0^\infty \int_{\R^{n-k}} \frac{1}{|z|^{n-k+2s}} W \Big( \frac{\mu_{x} u(t,z)^{1/k}}{|z|}\Big) \, dz dt. 
$$
\end{proof}

\begin{lem} \label{lem:lem4.5}
Suppose we are under the assumptions of Proposition~\ref{prop:lem4.6}. Let $x_1 \in \overline{B_r(x_0)}$ and  $d=|x_1-x_0|$.
The following holds:
\begin{enumerate}[$(a)$]
\item If $t \in (2\Lambda d, \vep]$, then
$
D_{x_0}u (t- 2\Lambda d) \subset D_{x_1} u (t). 
$
\item If $ t\in (\vep, \infty)$, then 
$
D_{x_0}u \big (t - 2\Lambda d{t}/{\vep} \big ) \subset D_{x_1} u (t). 
$
\end{enumerate}
\end{lem}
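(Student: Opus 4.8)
The plan is to compare the two ``paraboloid-subtracted'' functions $P_i(x):=u(x)-u(x_i)-(x-x_i)\cdot\nabla u(x_i)$, $i=0,1$, for which $D_{x_i}u(t)=\{P_i\le t\}$. An elementary computation shows that $P_1-P_0$ is affine and, more precisely, $P_1(x)-P_0(x)=-P_0(x_1)+(x-x_1)\cdot\big(\nabla u(x_0)-\nabla u(x_1)\big)$; since $P_0(x_1)\ge0$ by convexity of $u$, this gives the pointwise bound
\begin{equation*}
P_1(x)\le P_0(x)+(x-x_1)\cdot\big(\nabla u(x_0)-\nabla u(x_1)\big),\qquad x\in\R^n.
\end{equation*}
Since $[u]_{C^{1,1}(\R^n)}\le1$ and $u$ is convex, the pointwise $C^{1,1}$ estimate forces $0\le D^2u\le 2I$ a.e., so $\nabla u$ is Lipschitz and $|\nabla u(x_0)-\nabla u(x_1)|\le 2d$; hence $P_1(x)\le P_0(x)+2d\,|x-x_1|$, and the lemma reduces to controlling $|x-x_1|$ on the relevant sublevel set of $P_0$.

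Two elementary observations do this. First, $[u]_{C^{1,1}(\R^n)}\le1$ and convexity give $0\le P_0(y)\le|y-x_0|^2$ for all $y$, whence $B_{\sqrt{\vep}}(x_0)\subseteq D_{x_0}u(\vep)$ and so $\Lambda\ge2\sqrt{\vep}$; together with $d\le r\le\vep/(4\Lambda)$ this forces $d\le\sqrt{\vep}/8$, and therefore $P_0(x_1)\le d^2<\vep$, i.e. $x_1\in D_{x_0}u(\vep)$. Second, $P_0$ is convex with $P_0(x_0)=0$, so $P_0\big(x_0+\theta(y-x_0)\big)\le\theta\,P_0(y)$ for all $y$ and all $\theta\in[0,1]$.

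For $(a)$, let $x\in D_{x_0}u(t-2\Lambda d)$ with $t\in(2\Lambda d,\vep]$. Since $t-2\Lambda d\le\vep$, we have $x\in D_{x_0}u(\vep)$, and as $x_1\in D_{x_0}u(\vep)$ as well, the diameter hypothesis gives $|x-x_1|\le\Lambda$; hence $P_1(x)\le P_0(x)+2\Lambda d\le (t-2\Lambda d)+2\Lambda d=t$, i.e. $x\in D_{x_1}u(t)$. For $(b)$, let $t>\vep$ and $x\in D_{x_0}u\big(t-2\Lambda d\,t/\vep\big)$, so $P_0(x)\le t$, and set $\theta=\vep/t\in(0,1)$. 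By the second observation, both $x_0+\theta(x-x_0)$ and $x_0+\theta(x_1-x_0)$ satisfy $P_0\le\vep$, hence lie in $D_{x_0}u(\vep)$, so $\theta\,|x-x_1|\le\Lambda$, i.e. $|x-x_1|\le(t/\vep)\Lambda$. Therefore $P_1(x)\le P_0(x)+2\Lambda d\,t/\vep\le\big(t-2\Lambda d\,t/\vep\big)+2\Lambda d\,t/\vep=t$, i.e. $x\in D_{x_1}u(t)$.

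The step I expect to be the main obstacle is the regime $t>\vep$ in $(b)$: the diameter of $D_{x_0}u(\vep)$ cannot be invoked directly, and the correct device is the radial contraction by $\theta=\vep/t$, applied \emph{simultaneously} to $x$ and to $x_1$ --- this is exactly what turns $\Lambda$ into $(t/\vep)\Lambda$ and explains the factor $t/\vep$ in the statement. A second delicate point is the preliminary placement of $x_1$ inside the reference section $D_{x_0}u(\vep)$, which rests on the lower bound $\Lambda\ge2\sqrt\vep$ and on $r\le\vep/(4\Lambda)$; it is precisely this inclusion $\{x,x_1\}\subset D_{x_0}u(\vep)$ (rather than a cruder triangle-inequality bound on $|x-x_1|$) that produces the sharp constant $2\Lambda d$.
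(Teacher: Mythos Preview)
Your proof is correct and follows essentially the same route as the paper's: the same algebraic identity $P_1(x)=P_0(x)-P_0(x_1)+(x-x_1)\cdot(\nabla u(x_0)-\nabla u(x_1))$, and for $(b)$ the same radial dilation by $t/\vep$ reducing to the diameter of $D_{x_0}u(\vep)$. The only cosmetic difference is that you first place $x_1$ itself inside $D_{x_0}u(\vep)$ (via $P_0(x_1)\le d^2<\vep$) and bound $|x-x_1|\le\Lambda$ directly from the diameter, whereas the paper bounds $|x-x_0|\le\Lambda$ and reaches $|x-x_1|\le\Lambda+d\le2\Lambda$ through the triangle inequality; this is why your factor $2$ in $|\nabla u(x_0)-\nabla u(x_1)|\le 2d$ causes no loss.
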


\begin{proof}
First we prove $(a)$. Fix $t \in  (2\Lambda d, \vep]$ and let $x\in D_{x_0}u(t-2\Lambda d)$. Then
\begin{equation} \label{eq1}
u(x)-u(x_0) - (x-x_0) \cdot \nabla u (x_0) \leq t-2\Lambda d.
\end{equation}
Using \eqref{eq1}, convexity, and $[u]_{C^{1,1}(\R^n)}\leq 1$, we see that
\begin{align*}
u(x)-u(x_1) - (x-x_1) \cdot \nabla u (x_1) 
&= u(x)-u(x_0) - (x-x_0) \cdot \nabla u (x_0) \\
& \quad - \big( u(x_1) - u(x_0) -(x_1-x_0) \cdot \nabla u(x_0) \big)\\
& \quad + (x-x_1) \cdot (\nabla u(x_0) - \nabla u(x_1))\\
&\leq t - 2\Lambda d + |x-x_1| d.
\end{align*}
Moreover, $x\in D_{x_0} u(\vep)$, since $t \leq \vep$, and thus,
$$
|x-x_1| \leq |x-x_0| + |x_0 - x_1| \leq \Lambda + d \leq 2\Lambda .
$$
Therefore, $x\in D_{x_1} u(t)$.

Next we prove $(b)$. Fix $t\in (\vep,\infty)$ and let $x\in D_{x_0}u \big (t - 2\Lambda dt/\vep \big )$. By the previous computation, we have that
\begin{align} \label{eq2}
u(x)-u(x_1) - (x-x_1) \cdot \nabla u (x_1) & \leq t - 2\Lambda dt/\vep + (|x-x_0|+\Lambda) d.
\end{align}
To control the distance from $x$ to $x_0$, we need to estimate the diameter of $D_{x_0} u(t)$. 
Take $y\in D_{x_0}u(t) \setminus D_{x_0}u(\vep)$ and let $z$ be in the intersection between $\partial D_{x_0}u(\vep)$ and the line segment joining $x_0$ and $y$. Then there is some $\lambda>1$ such that $y-x_0=\lambda (z-x_0)$. 
By convexity of $u$, 
$$
u(z) \leq \tfrac{\lambda-1}{\lambda} u(x_0) + \tfrac{1}{\lambda} u(y).
$$
Therefore,
\begin{align*}
\lambda \vep &= \lambda\big (u(z) - u(x_0) -(z-x_0) \cdot \nabla u(x_0) \big)\\
& \leq (\lambda-1) u(x_0) + u(y) -\lambda u(x_0) -(y-x_0)\cdot \nabla u(x_0)\\
& = u(y) - u(x_0) - (y-x_0) \cdot \nabla u(x_0) \leq t,
\end{align*}
so $\lambda \leq {t}/{\vep}$. By convexity, we have that $D_{x_0}u(t) \subset  x_0 + \tfrac{t}{\vep} (D_{x_0}u(\vep)-x_0)$.
It follows that
$$
\diam D_{x_0} u(t) \leq {t}/{\vep} \diam D_{x_0} u(\vep) = \Lambda t / \vep.
$$
Hence, $|x-x_0| \leq \Lambda t/\vep$, and by \eqref{eq2}, we get
$$
u(x)-u(x_1) - (x-x_1) \cdot \nabla u (x_1) \leq t - 2\Lambda d{t}/{\vep} + (\Lambda {t}/{\vep}+\Lambda) d \leq t,
$$
which means that $x \in D_{x_1} u (t). $
\end{proof}

We are ready to give the proof of Proposition~\ref{prop:lem4.6}.

\begin{proof}[Proof of Proposition~\ref{prop:lem4.6}]
Let $x_1 \in {B_{r}(x_0)}$, with $r\leq \vep/(4\Lambda)$, and call $d=|x_0-x_1|$. We will estimate $\F_k^s u(x_1)$ using Proposition~\ref{prop:repW}:
$$
\F_k^s u(x_1) 
= c_{n,s} \int_0^\infty \int_{\R^{n-k}} \frac{1}{|z|^{n-k+2s}} W \Big( \frac{\mu_{x_1} u(t,z)^{1/k}}{|z|}\Big) \, dz dt. 
$$
In view of Lemma~\ref{lem:lem4.5}, we divide the integral with respect to $t$ in three parts:
 $$
 {\rm I.}\  t \in (0,2\Lambda d], \quad 
 {\rm  II.} \ t\in (2\Lambda d, \vep],\quad
 {\rm  III.}\  t\in (\vep, \infty).
$$
Let us start with I. Since $u\in C^{1,1}(\R^n)$ with $[u]_{C^{1,1}(\R^n)} \leq 1$, then
$$
\mu_{x_1} u(t,z) \geq (t-|z|^2)_+^{k/2}.
$$
Hence, using that $W(\rho)$ is monotone decreasing, we get
$$
W \Big( \frac{\mu_{x_1} u(t,z)^{1/k}}{|z|}\Big) \leq W \Big( \big(\tfrac{t}{|z|^2}-1\big)_+^\frac{1}{2}\Big).
$$
Therefore,
\begin{align*}
\int_{\R^{n-k}} \frac{1}{|z|^{n-k+2s}} W \Big( \frac{\mu_{x_1} u(t,z)^{1/k}}{|z|}\Big) \, dz
&\leq  \int_{\{|z|< t^{1/2}\}} \frac{1}{|z|^{n-k+2s}} W \Big( \big(\tfrac{t}{|z|^2}-1\big)^\frac{1}{2}\Big) \, dz\\
&\qquad + W(0) \int_{\{|z|> t^{1/2}\}} \frac{1}{|z|^{n-k+2s}}  \, dz \equiv I_1+I_2.
\end{align*}
Note that $W(0)=C(n,k,s)<\infty$. Then
$$
I_2 \lesssim \int_{t^{1/2}}^{\infty} \frac{1}{\rho^{n-k+2s}} \rho^{n-k-1} \, d\rho \eqsim t^{-s}.
$$
For $I_1$, we make the change of variables, $w=z/t^{1/2}$. We see that
\begin{align*}
 I_1 &= \int_{\{|w|< 1\}} \frac{1}{t^\frac{n-k+2s}{2} |w|^{n-k+2s}} W \big( \big(\tfrac{1}{|w|^2}-1\big)^\frac{1}{2}\big) t^\frac{n-k}{2} \, dw
 \eqsim \frac{1}{t^s} \int_0^1 \frac{1}{\rho^{1+2s}} W\big(\big( \tfrac{1}{\rho^2}-1\big)^\frac{1}{2}\big) \, d\rho.
\end{align*}
Note that if $0<\rho \leq 1/2$, then $\big( \tfrac{1}{\rho^2}-1\big)^\frac{1}{2}\geq \tfrac{1}{\sqrt 2\rho}$. Hence,
\begin{align*}
W\big(\big( \tfrac{1}{\rho^2}-1\big)^\frac{1}{2}\big) \leq W\big( \tfrac{1}{\sqrt{2}\rho}\big) = \int_{\tfrac{1}{\sqrt{2}\rho}}^\infty \frac{r^{k-1}}{(1+r^2)^\frac{n+2s}{2}}\, dr \lesssim \rho^{n-k+2s}.
\end{align*}
Therefore,
$$
I_1 \lesssim  t^{-s} \int_0^{1/2} \frac{1}{\rho^{1+2s}} \rho^{n-k+2s} \, d\rho + t^{-s} W(0) \int_{1/2}^1  \frac{1}{\rho^{1+2s}} \, d\rho \eqsim t^{-s},
$$
since $n-k > 0$. We conclude that
\begin{align*}
{\rm I} &= c_{n,s} \int_0^{2\Lambda d} \int_{\R^{n-k}} \frac{1}{|z|^{n-k+2s}}W \Big( \frac{\mu_{x_1} u(t,z)^{1/k}}{|z|}\Big) \, dz dt\\ 
&\lesssim \int_0^{2\Lambda d} t^{-s}\, dt 
\eqsim (2\Lambda d)^{1-s} = (2\Lambda)^{1-s} |x_1-x_0|^{1-s}.
\end{align*}

Next we estimate the integral for $t\in(2\Lambda d, \vep]$. To this end, we use Lemma~\ref{lem:lem4.5}, part~$(a)$:
$$
D_{x_0}u (t- 2\Lambda d) \subset D_{x_1} u (t). 
$$
In particular, for any $z\in \R^{n-k}$ fixed, we have 
$$
\{ y \in \R^k : \tilde u_{x_0}(y,z) \leq t-2\Lambda d \} \subset \{ y \in \R^k : \tilde u_{x_1}(y,z) \leq t \} .
$$
Hence, $\mu_{x_0}(t-2\Lambda d, z) \leq \mu_{x_1}(t,z)$, which yields
\begin{align*}
{\rm II} &= c_{n,s} \int_{2\Lambda d}^\vep  \int_{\R^{n-k}}\frac{1}{|z|^{n-k+2s}} W \Big( \frac{\mu_{x_1} u(t,z)^{1/k}}{|z|}\Big) \, dz dt \\
&\leq c_{n,s} \int_{0}^{\vep-2\Lambda d}  \int_{\R^{n-k}}\frac{1}{|z|^{n-k+2s}} W \Big( \frac{\mu_{x_0} u(t,z)^{1/k}}{|z|}\Big) \, dz dt.
\end{align*}
Finally, we estimate the integral for $t\in[\vep,\infty)$. By  Lemma~\ref{lem:lem4.5}, part~$(b)$:
$$
D_{x_0}u \big (t - 2\Lambda d{t}/{\vep} \big ) \subset D_{x_1} u (t). 
$$
Hence, $\mu_{x_0}u(t-2\Lambda d{t}/{\vep}, z) \leq \mu_{x_1}u(t,z)$, and
\begin{align*}
{\rm III} &= c_{n,s} \int_\vep^{\infty} \int_{\R^{n-k}} \frac{1}{|z|^{n-k+2s}}W \Big( \frac{\mu_{x_1}u(t,z)^{1/k}}{|z|}\Big) \, dz dt\\ 
& \lesssim  \int_\vep^{\infty} \int_{\R^{n-k}} \frac{1}{|z|^{n-k+2s}}W \Big( \frac{\mu_{x_0}u(t-2\Lambda d{t}/{\vep}, z)^{1/k}}{|z|}\Big) \, dz dt\\
& = \frac{1}{1-2\Lambda d/\vep} \int_{\vep-2\Lambda d}^{\infty}  \int_{\R^{n-k}}\frac{1}{|z|^{n-k+2s}} W \Big( \frac{\mu_{x_0} u(t,z)^{1/k}}{|z|}\Big) \, dz dt.
\end{align*}
Note that
$$
{\rm II+III} \leq  \frac{c_{n,s}}{1-2\Lambda d/\vep}\int_{0}^{\infty}  \int_{\R^{n-k}}\frac{1}{|z|^{n-k+2s}} W \Big( \frac{\mu_{x_0} u(t,z)^{1/k}}{|z|}\Big) \, dz dt = \frac{\vep}{\vep-2\Lambda d} \F_k^s u(x_0).
$$
Therefore, we conclude that
\begin{align*}
\F_k^s u(x_1)- \F_k^s u(x_0) 
&\leq C \Lambda^{1-s} |x_1-x_0|^{1-s} 
+ \big(\tfrac{\vep}{\vep-2\Lambda d}-1\big) \F_k^s u(x_0)\\
& \leq  C \Lambda^{1-s} |x_1-x_0|^{1-s} 
+ \tfrac{4\Lambda}{\vep} |x_1-x_0| \F_k^s u(x_0)
\end{align*}
since $d < r \leq \vep/(4\Lambda)$, and thus, $\vep - 2\Lambda d \geq \vep /2$.
\end{proof}

 %%%%%%%%%%%%%%%%%%%%%%%%%%%%%%%%%%%%%
\section{A Global Poisson Problem} \label{sec:GPP}
%%%%%%%%%%%%%%%%%%%%%%%%%%%%%%%%%%%%%

We consider the following Poisson problem in the full space:
 \begin{align} \label{eq:PP}
 \begin{cases}
 \F_k^s u = u - \varphi & \hbox{in}~\Rn\\
 (u-\varphi)(x) \to 0 & \hbox{as}~|x|\to \infty,
 \end{cases}
\end{align}
where $\varphi:\R^n \to \R$ is nonnegative, smooth, and strictly convex. Furthermore, we ask that $\varphi$ behaves asymptotically at infinity as a cone $\phi$, that is,  
\begin{equation} \label{eq:cone}
\lim_{|x|\to\infty} (\varphi-\phi)(x) =0.
\end{equation}
Similar problems have been studied for nonlocal Monge-Amp\`{e}re operators in \cite{CC,CS}.

We will prove the following theorem.
\begin{thm} \label{thm:mainPP}
There exists a unique solution $u$ to \eqref{eq:PP} such that $u \in C^{1,1}(\R^n)$ with
$$
[u]_{C^{1,1}(\R^n)} \leq [\varphi]_{C^{1,1}(\R^n)}.
$$
\end{thm}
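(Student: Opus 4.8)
\emph{Strategy.} I would establish three things, in order: a comparison principle (which gives uniqueness and is the engine for the rest), existence via Perron's method with suitable barriers, and the estimate $[u]_{C^{1,1}(\R^n)}\le[\varphi]_{C^{1,1}(\R^n)}$ by a translation--averaging device. Throughout I use that $\F_k^s$ is translation invariant, that it is an \emph{infimum} of the linear nonlocal operators $L_K$ and hence concave, and that $|x|^{-n-2s}\in\K_k^s$, so that $\F_k^s w\le\Delta^s w$ for every admissible $w$.

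\textbf{Comparison principle.} Suppose $u$ is a subsolution and $v$ a supersolution of \eqref{eq:PP} (in the sense introduced in this section) with $\limsup_{|x|\to\infty}(u-v)\le0$; I claim $u\le v$. If not, $M:=\sup_{\R^n}(u-v)>0$, and by the behavior at infinity the supremum is attained at some $x_0\in\R^n$. Since $x_0$ is a global maximum of $u-v$, for every $K\in\K_k^s$ the increment $(u-v)(x_0+x)-(u-v)(x_0)-x\cdot\nabla(u-v)(x_0)$ is nonpositive, hence $L_Ku(x_0)\le L_Kv(x_0)$, and taking the infimum over $K$ gives $\F_k^su(x_0)\le\F_k^sv(x_0)$. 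Therefore
\[
u(x_0)-\varphi(x_0)\le\F_k^su(x_0)\le\F_k^sv(x_0)\le v(x_0)-\varphi(x_0),
\]
so $(u-v)(x_0)\le0$, a contradiction. (Making this rigorous at points where $u$ or $v$ is not twice differentiable is handled within the viscosity framework of this section, via sup-/inf-convolutions, as for $\MA^s$ in \cite{CS}.) In particular \eqref{eq:PP} has at most one solution.

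\textbf{Existence.} The function $\varphi$ is a subsolution: being convex, $\tilde\varphi\ge0$ and so $\F_k^s\varphi\ge0=\varphi-\varphi$. For an upper barrier, solve the \emph{linear} problem $\Delta^s\bar u=\bar u-\varphi$ in $\R^n$ with $\bar u-\varphi\to0$ at infinity; writing $w=\bar u-\varphi$ this reads $(I-\Delta^s)w=\Delta^s\varphi$, and since $\Delta^s\varphi\ge0$ is bounded and tends to $0$ at infinity (here one uses $s>1/2$ together with \eqref{eq:cone}), the Bessel potential $w=(I-\Delta^s)^{-1}(\Delta^s\varphi)$ is nonnegative, bounded, smooth, and vanishes at infinity. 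Then $\bar u=\varphi+w$ satisfies $\bar u\ge\varphi$, $\bar u-\varphi\to0$, and $\F_k^s\bar u\le\Delta^s\bar u=\bar u-\varphi$, so $\bar u$ is a supersolution. Setting
\[
u=\sup\big\{\,w:\ w\ \text{a subsolution of \eqref{eq:PP} with}\ \varphi\le w\le\bar u\,\big\},
\]
a nonempty class containing $\varphi$, the standard Perron argument — using the comparison principle to bump $u$ up wherever it would fail to be a supersolution — shows $\F_k^su=u-\varphi$, while the squeeze $\varphi\le u\le\bar u$ forces $(u-\varphi)(x)\to0$. Convexity of $u$, which is part of the solution concept here, is obtained separately (e.g.\ by replacing subsolutions with their convex envelopes, which stay admissible below $\bar u$).

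\textbf{The $C^{1,1}$ bound.} Fix $h\in\R^n$ and write $\Lambda_0=[\varphi]_{C^{1,1}(\R^n)}$. By translation invariance $u(\cdot\pm h)$ solves $\F_k^sv(x)=v(x)-\varphi(x\pm h)$; averaging and using concavity of $\F_k^s$, the function $w(x)=\tfrac12\big(u(x+h)+u(x-h)\big)$ satisfies
\[
\F_k^sw\ \ge\ \tfrac12\big(u(\cdot+h)-\varphi(\cdot+h)\big)+\tfrac12\big(u(\cdot-h)-\varphi(\cdot-h)\big)\ =\ w-\tfrac12\big(\varphi(\cdot+h)+\varphi(\cdot-h)\big)\ \ge\ w-\varphi-\Lambda_0|h|^2,
\]
the last step being the $C^{1,1}$ bound on $\varphi$. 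Thus $w$ is a subsolution, and $\bar w:=u+\Lambda_0|h|^2$ an exact solution, of $\F_k^sV=V-(\varphi+\Lambda_0|h|^2)$. Since $u-\varphi\to0$, we have $\limsup_{|x|\to\infty}\big(u(x+h)+u(x-h)-2u(x)\big)\le2\Lambda_0|h|^2$, hence $\limsup_{|x|\to\infty}(w-\bar w)\le0$, and the comparison principle gives $w\le\bar w$, i.e.
\[
u(x+h)+u(x-h)-2u(x)\le 2\Lambda_0|h|^2\qquad\text{for all }x,h\in\R^n.
\]
Together with convexity of $u$ (giving the reverse inequality $\ge0$), this is precisely $u\in C^{1,1}(\R^n)$ with $[u]_{C^{1,1}(\R^n)}\le\Lambda_0=[\varphi]_{C^{1,1}(\R^n)}$.

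\textbf{Main obstacle.} The conceptual skeleton above is short; the real work is in making the comparison principle rigorous for the weak notion of solution adapted to merely continuous convex functions — handling non-smooth touching points and the possibility that $\F_k^s$ takes the value $-\infty$ — and in verifying the elementary but fiddly boundedness and decay at infinity of $\Delta^s\varphi$ needed for the upper barrier.
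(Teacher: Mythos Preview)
Your overall strategy matches the paper's closely: the same comparison principle, the same barriers ($\varphi$ below, $\varphi+(I-\Delta^s)^{-1}\Delta^s\varphi$ above), Perron's method, and the same translation--averaging device for the $C^{1,1}$ bound. The difference is in the ordering and in where you locate the difficulty.

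You flag the comparison principle as the ``main obstacle'' and dismiss the step that the Perron supremum is a subsolution as ``standard''. In the paper this is reversed: comparison is essentially imported from \cite{CS}, while showing that $u=\sup_{v\in\S}v$ satisfies $\F_k^s u\ge u-\varphi$ is the most technical step of the section (Proposition~\ref{prop:subsolution}). The argument is \emph{not} the usual viscosity one: because $\F_k^s$ is defined pointwise through the subdifferential and is an infimum over an unbounded, non-uniformly-elliptic family of kernels, one cannot simply pass to the limit along a maximizing sequence of subsolutions. The paper's proof needs (i) $u\in C^{1,1}$ already available, (ii) the $C^{0,1-s}$ estimate for $\F_k^s u$ from Proposition~\ref{prop:lem4.6}, and (iii) the layer-cake representation of Proposition~\ref{prop:repW}, which allows one to compare $\F_k^s u(x_2)$ with $\F_k^s v(x_1)$ through inclusions of sections $D_{x}u(t)$, $D_{x}v(t)$. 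This is precisely why the paper proves the $C^{1,1}$ bound \emph{first} (Proposition~\ref{prop:regu}), working entirely inside the admissible class $\S$ rather than with the solution itself, and only then uses that regularity together with the Section~\ref{sec:reg} machinery to verify the subsolution property. Your ordering---existence first, $C^{1,1}$ afterwards via comparison applied to the solution---does not sidestep this work; it is hidden in the phrase ``standard Perron argument''.
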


To define the notion of solution, we introduce a natural pointwise definition of $\F_k^s u$ for functions $u$ that are merely continuous. 

\begin{defn}
Let $u \in C^0(\Rn)$.
\begin{enumerate}[$(a)$]
\item We say that a linear function $l(y)= y \cdot p + b$, with $p\in \Rn$, and $b\in \R$, is a supporting plane of $u$ at a point $x$ if $l(x)=u(x)$ and $l(y)\leq u(y)$, for all $y\in \Rn$. 
\item We define the subdifferential of $u$ at a point $x$ as the set $\partial u (x)$ of all vectors $p\in \Rn$ such that $l(y)= y \cdot p +b$ is a supporting plane of $u$ at $x$, for some $b\in \R$.
\end{enumerate}
\end{defn}

\begin{defn}
Let $u \in C^0(\Rn)$ be a convex function. For $x_0\in \Rn$, we define
\begin{equation*}
\F_k^s u(x_0) = c_{n,s} \sup_{p \in \partial u(x_0)} \inf_{K\in \K_{k}^s} \int_{\Rn}  (u(x_0+x)-u(x_0)-x\cdot p) K(x)\, dx.
\end{equation*}
\end{defn}

\begin{rem}
Note that if $u\in C^{1,1}(x_0)$, then $\partial u(x_0) = \{\nabla u(x_0)\}$, and the previous definition coincides with Definition~\ref{def:IO}.
\end{rem}

The following properties of $\F_k^s u$ will be useful for our purposes. The proof is analogous to the one in \cite{CS}, so we omit it here.

\begin{lem} \label{lem:properties}
Let $u,v \in C^0(\Rn)$ be convex functions.The following holds:
\begin{enumerate}[$(a)$]
\item (Homogeneity). For any $\lambda >0$, 
$$
\F_k^s (\lambda u) = \lambda \F_k^s u.
$$ 

\item (Monotonicity).  Assume that $u(x_0)=v(x_0)$ and $u(x)\geq v(x)$ for all $x\in \Rn$. Then
$$
\F_k^s u(x_0) \geq \F_k^s v(x_0).
$$
\item (Concavity). For any $x\in \Rn$,
$$
\F_k^s \Big ( \frac{u+v}{2}\Big) (x) \geq \frac{\F_k^s u(x) + \F_k^s v (x)}{2}.
$$
 \item (Lower semicontinuity). Assume that $u\in C^{1,1}(\Rn)$. Then
$$
\F_k^s u(x_0) \leq \liminf_{x\to x_0} \F_k^s u(x).
$$
\end{enumerate}
\end{lem}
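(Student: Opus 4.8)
The plan is to read off (a), (b), (c) directly from the inf--sup structure of the definition together with three elementary facts about subdifferentials. For \emph{(a)}, $\partial(\lambda u)(x_0)=\lambda\,\partial u(x_0)$ for $\lambda>0$, and under $p\mapsto\lambda p$ the integrand satisfies $\lambda u(x_0+x)-\lambda u(x_0)-x\cdot(\lambda p)=\lambda\big(u(x_0+x)-u(x_0)-x\cdot p\big)$; since multiplication by a positive constant commutes with $\inf_{K}$ and $\sup_{p}$, homogeneity follows. For \emph{(b)}, the key point is $\partial v(x_0)\subseteq\partial u(x_0)$: a supporting plane $l(y)=y\cdot p+b$ of $v$ at $x_0$ satisfies $l(x_0)=v(x_0)=u(x_0)$ and $l\le v\le u$, so it supports $u$ at $x_0$; for such $p$ the $u$-integrand dominates the $v$-integrand pointwise (their difference is $u(x_0+x)-v(x_0+x)\ge0$), and since every $K\in\K_{k}^s$ is nonnegative, $\inf_K$ of the $u$-integral is $\ge\inf_K$ of the $v$-integral; taking $\sup$ over $p\in\partial v(x_0)\subseteq\partial u(x_0)$ and multiplying by $c_{n,s}>0$ gives $\F_k^su(x_0)\ge\F_k^sv(x_0)$. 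For \emph{(c)}, if $p\in\partial u(x)$ and $q\in\partial v(x)$ then $\tfrac12(p+q)\in\partial\big(\tfrac{u+v}{2}\big)(x)$ (the half-sum of the two supporting planes supports the half-sum), and for this covector the integrand for $\tfrac{u+v}{2}$ is exactly the average of the $u$-integrand at $p$ and the $v$-integrand at $q$; hence, for every fixed $K$, the corresponding integrals average, and superadditivity of the infimum gives $\inf_K(\cdots\tfrac{u+v}{2}\cdots)\ge\tfrac12\inf_K(\cdots u,p)+\tfrac12\inf_K(\cdots v,q)$. Taking the supremum over the independent pair $(p,q)$, which splits as the sum of the two separate suprema, yields (c). All integrands are nonnegative once $p\in\partial u$, $q\in\partial v$, so these manipulations are legitimate in $[0,+\infty]$.

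The substantive part is \emph{(d)}, lower semicontinuity, which for $u\in C^{1,1}(\Rn)$ reduces to $\F_k^su(x_0)\le\liminf_{x\to x_0}\F_k^su(x)$, since then $\partial u(x)=\{\nabla u(x)\}$ and $\F_k^su(x)=c_{n,s}\inf_{K\in\K_{k}^s}\int_{\Rn}\tilde{u}_x(\xi)K(\xi)\,d\xi$ with $\tilde{u}_x(\xi)=u(x+\xi)-u(x)-\xi\cdot\nabla u(x)$. I would argue through the representation of Proposition~\ref{prop:repW}, expressing $\F_k^su(x)$ via the section--measures $\mu_x u(t,z)=\omega_k^{-1}\H^k\big(\{y\in\R^k:\tilde{u}_x(y,z)\le t\}\big)$ and the decreasing profile $W$. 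As $x\to x_0$ one has $\tilde{u}_x\to\tilde{u}_{x_0}$ locally uniformly; since each sublevel set $\{\tilde{u}_x(\cdot,z)\le t\}$ is convex, and a sequence of convex sets converging locally in Hausdorff distance to a \emph{bounded} convex set is eventually uniformly bounded, no mass escapes to infinity and $\mu_x u(t,z)\to\mu_{x_0}u(t,z)$ for a.e.\ $t$. Continuity and monotonicity of $W$ together with Fatou's lemma then give $\F_k^su(x_0)\le\liminf_{x\to x_0}\F_k^su(x)$; in fact, under the uniform section bound of Theorem~\ref{thm:regularity} one gets the stronger $C^{0,1-s}$-conclusion. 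The degenerate cases, where some $z$-slice of some sublevel set of $\tilde{u}_{x_0}$ has infinite $\H^k$-measure, fall under Case~2 or Case~3 of Section~\ref{sec:inf}; there $\F_k^su$ coincides locally either with $0$ (Case~3, which by convexity means $u$ is affine in a direction of $\langle e_1,\dots,e_k\rangle$, a property stable in $x$) or with the value of a flatter lower-dimensional operator that is itself continuous, so lower semicontinuity again holds.

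The main obstacle is precisely the reconciliation, in (d), of the convergence of sections with the lack of compactness of $\K_{k}^s$: one cannot prove (d) by the naive route of showing that $x\mapsto\int\tilde{u}_x K$ is equicontinuous uniformly in $K\in\K_{k}^s$, because $\K_{k}^s$ is not compact and $|\xi|K(\xi)$ is not uniformly integrable over it. If one prefers to argue directly at the level of kernels, as in \cite{CS}, the estimate $|\tilde{u}_{x_1}(\xi)-\tilde{u}_{x_0}(\xi)|\le [u]_{C^{1,1}(\Rn)}\min\{\,|\xi|^2,\ 2|\xi|\,|x_1-x_0|\,\}$ (from $C^{1,1}$ and the fundamental theorem of calculus) is integrable against any admissible kernel near the origin with a gain $O(|x_1-x_0|^{2(1-s)})$ \emph{precisely because} $s>1/2$, but the far field must be controlled separately by observing that a near-minimizing kernel for $\F_k^su(x_1)$ cannot place much mass on the region where $\tilde{u}_{x_1}$ is large — which is exactly the structural point exploited in \cite{CS}, and the reason the paper is content to invoke that argument.
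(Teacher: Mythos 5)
The paper itself gives no proof of this lemma -- it states that ``the proof is analogous to the one in \cite{CS}'' and omits it -- so there is nothing to compare line by line; I can only assess your reconstruction on its merits. Parts (a)--(c) are correct and complete: the subdifferential identities $\partial(\lambda u)(x_0)=\lambda\,\partial u(x_0)$, $\partial v(x_0)\subseteq\partial u(x_0)$ under the hypotheses of (b), and $\tfrac12(p+q)\in\partial\big(\tfrac{u+v}{2}\big)(x)$ are all valid, the pointwise comparison of integrands is right, and the bookkeeping with $\inf_K$ and the sup over (pairs of) slopes is handled correctly; this is surely the intended argument.

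Part (d) is where the substance lies, and your proposal is a sketch rather than a proof, though it identifies the correct strategy (the representation of Proposition~\ref{prop:repW} plus Fatou, or equivalently the section-measure monotonicity of Lemma~\ref{lem:lem4.5}) and the genuine obstacle (non-compactness of $\K_k^s$, so one cannot pass to the limit uniformly in $K$). Two gaps remain. First, Proposition~\ref{prop:repW} is established in the paper only under the Case~1 hypothesis that every slice of every sublevel set of $\tilde u_{x}$ has finite $\H^k$-measure; invoking it for arbitrary convex $u\in C^{1,1}(\Rn)$ requires checking it (or a one-sided version of it) in Cases~2 and~3 as well. Second, your dispatch of the degenerate cases contains a false claim: in Case~3 all sublevel slices have infinite $\H^k$-measure, but this does \emph{not} force $u$ to be affine in a direction of $\langle e_1,\dots,e_k\rangle$ (a convex function can have sublevel sets of infinite measure that contain no line, e.g.\ parabolic regions in $\R^2$). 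Fortunately this misstep is harmless for the conclusion you actually need: lower semicontinuity at a point $x_0$ with $\F_k^s u(x_0)=0$ is trivial because $\F_k^s u\ge 0$ for convex $u$, and in the Fatou step the inequality $\liminf_{x\to x_0}W\big(\mu_xu(t,z)^{1/k}/|z|\big)\ge W\big(\mu_{x_0}u(t,z)^{1/k}/|z|\big)$ is automatic whenever $\mu_{x_0}u(t,z)=\infty$, so the only case requiring your convex-set compactness argument is the one where the limiting slice is bounded, where it does work. With these points repaired (and using that a convex function on $\R^k$ with one bounded nonempty sublevel set has all sublevel sets bounded), your outline for (d) closes.
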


\begin{defn}
Let $u \in C^0(\Rn)$ be a convex function. We say that $u$ is a subsolution to $ \F_k^s u = u - \varphi$ in $\Rn$ if 
$$
\F_k^s u (x_0) \geq u(x_0) - \varphi(x_0), \quad \hbox{for all}~x_0 \in \Rn.
$$
Similarly, $u$ is a supersolution if 
$$
\F_k^s u (x_0) \leq u(x_0) - \varphi(x_0),  \quad \hbox{for all}~x_0 \in \Rn.
$$
We say that $u$ is a solution if it is both a subsolution and a supersolution.
\end{defn}

\begin{lem} \label{lem:maxsub}
If $u$ and $v$ are subsolutions, then $\max\{u,v\}$ is a subsolution. 
\end{lem}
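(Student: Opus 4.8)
The plan is to reduce the statement to the monotonicity property recorded in Lemma~\ref{lem:properties}(b). First I would note that $w := \max\{u,v\}$ is again a convex function in $C^0(\Rn)$, since the pointwise maximum of two convex (respectively, continuous) functions is convex (respectively, continuous); in particular $\F_k^s w$ is defined pointwise and it makes sense to ask whether $w$ is a subsolution.

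Next, fix an arbitrary $x_0 \in \Rn$. Because $w(x_0) = \max\{u(x_0), v(x_0)\}$, at least one of the two functions attains this maximum at $x_0$; after relabeling $u$ and $v$ if necessary, assume $w(x_0) = u(x_0)$. Then $w(x) \geq u(x)$ for all $x \in \Rn$ while $w(x_0) = u(x_0)$, so Lemma~\ref{lem:properties}(b) applied to the ordered pair $(w,u)$ gives $\F_k^s w(x_0) \geq \F_k^s u(x_0)$.

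Finally, since $u$ is a subsolution we have $\F_k^s u(x_0) \geq u(x_0) - \varphi(x_0)$, and combining this with the previous inequality and the equality $u(x_0) = w(x_0)$ yields
$$
\F_k^s w(x_0) \geq \F_k^s u(x_0) \geq u(x_0) - \varphi(x_0) = w(x_0) - \varphi(x_0).
$$
As $x_0 \in \Rn$ was arbitrary, $w = \max\{u,v\}$ is a subsolution. There is essentially no obstacle in this argument: the only point requiring (routine) care is that $w$ remains in the class of convex continuous functions on which $\F_k^s$ is defined, and the heart of the proof is simply the monotonicity of $\F_k^s$ under the ordering of the competitor functions when their values agree at the base point.
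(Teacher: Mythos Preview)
Your proof is correct and essentially identical to the paper's own argument: both reduce to the monotonicity property in Lemma~\ref{lem:properties}(b) after noting that $w=\max\{u,v\}$ is convex and continuous, assuming without loss of generality that $w(x_0)=u(x_0)$, and then chaining $\F_k^s w(x_0)\geq \F_k^s u(x_0)\geq u(x_0)-\varphi(x_0)=w(x_0)-\varphi(x_0)$.
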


\begin{proof}
Let  $w=\max\{u,v\}$. Then $w$ is continuous and convex. Fix $x_0 \in \Rn$.  Without loss of generality, we may assume that $u(x_0) \geq v(x_0)$. Then $w(x_0)=u(x_0)$ and $w(x) \geq u(x)$, for any $x\in \Rn$. By monotonicity (see Lemma~\ref{lem:properties}), we have
$$
\F_k^s w(x_0) \geq \F_k^s u (x_0) \geq u(x_0) - \varphi(x_0) = w(x_0) - \varphi(x_0).
$$
Hence, $w$ is a subsolution.
\end{proof}

We will show existence and uniqueness of solutions to \eqref{eq:PP} using Perron's method. The key ingredients are the comparison principle, and the existence of a subsolution (lower barrier) and a supersolution (upper barrier). We state this in the following proposition. We omit the proof since it is similar to that in \cite{CS}.

\begin{prop} \label{prop:tools}
Consider the equation $\F_k^s u = u - \varphi$ in $\Rn$.
The following holds:
\begin{enumerate}[$(a)$]
\item (Comparison principle).
Let $u$ and $v$ be a subsolution and supersolution, respectively.  Assume that $u\leq v$ in $\R^n\setminus \Omega$, for some bounded domain $\Omega \subset \R^n$. Then $u\leq v$ in $\Rn$.\medskip

\item (Lower-barrier). The function $\varphi$ is a subsolution.\medskip

\item (Upper-barrier). The function $\varphi+w$ is a supersolution, where $w= (I-\Delta^s)^{-1}\Delta^s \varphi$. In particular, $w(x) \leq C(1+|x|)^{1-2s}$, for some $C>0$.
\end{enumerate}
\end{prop}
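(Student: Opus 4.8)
The plan is to establish the three items in turn; (a) and (b) are soft consequences of the structural properties in Lemma~\ref{lem:properties}, while (c) carries all the analytic content. For the \textbf{comparison principle (a)}, I argue by contradiction. Since $u\le v$ on $\Rn\setminus\Omega$ with $\overline\Omega$ compact and $u-v$ continuous, $M:=\sup_{\Rn}(u-v)$ is finite and, if positive, is attained at an interior point $x_0\in\Omega$. Set $\bar v:=v+M$. Then $\bar v$ is convex, $\bar v(x_0)=u(x_0)$, $\bar v\ge u$ on all of $\Rn$, and $\F_k^s\bar v(x_0)=\F_k^s v(x_0)$ since the operator is invariant under addition of constants (immediate from the definition, as $\partial(v+M)(x_0)=\partial v(x_0)$ and the increments of $v+M$ equal those of $v$). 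By monotonicity (Lemma~\ref{lem:properties}(b)) applied to the pair $(\bar v,u)$ we get $\F_k^s u(x_0)\le \F_k^s\bar v(x_0)=\F_k^s v(x_0)$. Combining this with the subsolution inequality $\F_k^s u(x_0)\ge u(x_0)-\varphi(x_0)$ and the supersolution inequality $\F_k^s v(x_0)\le v(x_0)-\varphi(x_0)$ forces $u(x_0)\le v(x_0)$, i.e. $M\le 0$, a contradiction; hence $u\le v$ in $\Rn$.

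For the \textbf{lower barrier (b)}, the right-hand side of the equation at $\varphi$ is $\varphi-\varphi=0$, so it suffices to show $\F_k^s\varphi(x_0)\ge 0$ for every $x_0$. Since $\varphi$ is convex, $\tilde\varphi_{x_0}(x)=\varphi(x_0+x)-\varphi(x_0)-x\cdot\nabla\varphi(x_0)\ge 0$ pointwise, so every integrand defining $\F_k^s\varphi(x_0)$ is nonnegative; hence $\int_{\Rn}\tilde\varphi_{x_0}K\ge0$ for each $K\in\K_k^s$ and $\F_k^s\varphi(x_0)\ge0$. That the quantity is also finite (so that $\F_k^s\varphi$ makes classical sense) is Proposition~\ref{prop:finite}: property $(P_2)$ is precisely global convexity of $\varphi$, and $(P_1)$ holds because \eqref{eq:cone} gives $\varphi(x)\le C(1+|x|)$ and $s>1/2$ makes $\int_{\Rn}(1+|x|)^{1-n-2s}\,dx<\infty$. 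Thus $\varphi$ is a subsolution.

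For the \textbf{upper barrier (c)}, first set $f:=\Delta^s\varphi$. Then $f\ge0$ (convexity of $\varphi$), $f$ is smooth, and the cone asymptotics give the growth bound $0\le f(x)\le C(1+|x|)^{1-2s}$: writing $\varphi=\phi+(\varphi-\phi)$, the $\Delta^s$ of the $1$-homogeneous part $\phi$ is $(1-2s)$-homogeneous, while $\varphi-\phi\to0$ contributes a faster-decaying correction. Since $w=(I-\Delta^s)^{-1}f=G_{2s}*f$, where $G_{2s}\ge0$, $\int_{\Rn}G_{2s}=1$, and $G_{2s}(x)\lesssim(1+|x|)^{-n-2s}$ (the positive Bessel-type kernel of $I-\Delta^s$, represented as $\int_0^\infty e^{-t}p_t^{(s)}\,dt$ over the fractional heat kernels $p_t^{(s)}$), convolution against the weight $(1+|y|)^{1-2s}$ together with $1+|x|\le(1+|y|)(1+|x-y|)$ yields $0\le w(x)\le C(1+|x|)^{1-2s}$; equivalently one compares $w$ with the explicit supersolution $A(1+|x|^2)^{(1-2s)/2}$ of $I-\Delta^s$. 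In particular $w\to0$ at infinity, $w$ is smooth, and $\varphi+w$ grows like a cone. From $(I-\Delta^s)w=f$ one gets the identity $\Delta^s(\varphi+w)=\Delta^s\varphi+\Delta^s w=f+(w-f)=w$. Finally, using that $\varphi+w$ is convex and $C^{1,1}$ (so $\partial(\varphi+w)(x_0)=\{\nabla(\varphi+w)(x_0)\}$, Definition~\ref{def:IO} applies, and $(P_1),(P_2)$ hold) together with the ordering $\F_k^s\le\Delta^s$ of Corollary~\ref{cor:order}, we obtain $\F_k^s(\varphi+w)(x_0)\le\Delta^s(\varphi+w)(x_0)=w(x_0)=(\varphi+w)(x_0)-\varphi(x_0)$, i.e. $\varphi+w$ is a supersolution.

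The main obstacle is entirely inside part (c): proving the sharp decay $w(x)\le C(1+|x|)^{1-2s}$ requires a uniform control of $\Delta^s\varphi$ derived from the cone asymptotics and the positivity/decay properties of the Bessel-type potential of $I-\Delta^s$, and one must also verify that $\varphi+w$ stays convex so that the sub/supersolution definition is the one actually used — this rests on playing the strict convexity of $\varphi$ against the decay of $D^2w$ inherited from the decay of $\Delta^s\varphi$. By contrast, (a) uses only monotonicity and constant-invariance of $\F_k^s$, and (b) uses only the trivial sign of $\tilde\varphi$ and Proposition~\ref{prop:finite}.
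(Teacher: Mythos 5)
Your argument is correct, and it is essentially the proof the paper has in mind: the paper omits the proof of Proposition~\ref{prop:tools} and defers to \cite{CS}, and your three steps — sliding the supersolution up by $\sup(u-v)$ and using monotonicity at the touching point for (a), the sign of $\tilde\varphi$ under convexity for (b), and the identity $\Delta^s(\varphi+w)=w$ combined with $\F_k^s\le\Delta^s$ (Corollary~\ref{cor:order}) for (c) — are exactly the ones used there and in \cite{CC}. The two points you leave as sketches, namely the decay $0\le\Delta^s\varphi(x)\le C(1+|x|)^{1-2s}$ feeding the bound on $w$, and the convexity of $\varphi+w$ (relevant because the paper's notion of supersolution and Lemma~\ref{lem:properties}(b) are stated for convex functions), are precisely the points the cited references work out; for the latter, observe that your comparison argument in (a) survives for a merely $C^{1,1}$ supersolution, since at the touching point the gradient of $v+M$ is the unique subgradient of $u$ and the integrands are then ordered kernel by kernel.
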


An immediate consequence of the comparison principle is the uniqueness of solutions.

 \begin{lem}[Uniqueness]
There exists at most one solution to \eqref{eq:PP}. 
 \end{lem}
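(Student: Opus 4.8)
The plan is to reduce everything to the comparison principle of Proposition~\ref{prop:tools}(a). Let $u_1$ and $u_2$ be two solutions to \eqref{eq:PP}. By definition each is convex and continuous, and each is simultaneously a subsolution and a supersolution, so the comparison principle is applicable to the pair $(u_1,u_2)$ and, symmetrically, to $(u_2,u_1)$. The only gap to bridge is that we do not a priori know that $u_1$ and $u_2$ are ordered outside a fixed compact set; we only know that their difference tends to $0$ at infinity.

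First I would record the (easy but crucial) fact that $\F_k^s$ is invariant under adding constants: for any convex $v\in C^0(\Rn)$ and any $c\in\R$ one has $\partial(v+c)(x_0)=\partial v(x_0)$, and the integrand $(v+c)(x_0+x)-(v+c)(x_0)-x\cdot p$ coincides with $v(x_0+x)-v(x_0)-x\cdot p$, whence $\F_k^s(v+c)=\F_k^s v$. Consequently, for every $\vep>0$ the function $u_2+\vep$ is still a supersolution, since $\F_k^s(u_2+\vep)=\F_k^s u_2\leq u_2-\varphi\leq (u_2+\vep)-\varphi$.

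Then, fixing $\vep>0$, I would invoke the behaviour at infinity: since $(u_1-\varphi)(x)\to 0$ and $(u_2-\varphi)(x)\to 0$ as $|x|\to\infty$, there is $R=R(\vep)>0$ with $|u_1-u_2|<\vep$ on $\Rn\setminus B_R$, so in particular $u_1\leq u_2+\vep$ there. Applying the comparison principle with the subsolution $u_1$, the supersolution $u_2+\vep$, and $\Omega=B_R$ gives $u_1\leq u_2+\vep$ on all of $\Rn$; letting $\vep\to 0$ yields $u_1\leq u_2$. Exchanging the roles of $u_1$ and $u_2$ gives the reverse inequality, hence $u_1=u_2$. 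With the comparison principle already in hand there is essentially no real obstacle; the single point deserving explicit justification is the constant-shift invariance of $\F_k^s$, which is precisely what turns the qualitative statement ``$u_1-u_2\to 0$ at infinity'' into a genuine ordering on the complement of a ball, to which Proposition~\ref{prop:tools}(a) applies.
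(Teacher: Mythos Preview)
Your proof is correct and follows essentially the same approach as the paper: shift one solution by $\vep$ so that the constant-invariance of $\F_k^s$ preserves the supersolution property, use the decay at infinity to get an ordering outside a bounded set, apply the comparison principle, and let $\vep\to 0$. The paper's argument is phrased as a contradiction and uses both $v+\vep$ and $v-\vep$, but the substance is identical.
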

 
 \begin{proof}
 Suppose by means of contradiction that there exist two functions $u,v \in C^0(\R^n)$ with $u\neq v$, satisfying \eqref{eq:PP}. Then
 $|u(x) -v(x)|\to 0$, as $|x|\to \infty$. Hence, for any $\vep>0$, there exists a compact set $\Omega_\vep \in \R^n$, depending on $\vep$, such that
 $$
 v(x)-\vep \leq u(x) \leq v(x)+ \vep \quad \hbox{for all}~x\in \R^n\setminus \Omega_\vep.
 $$
 Moreover,   for any $x_0 \in \R^n$, the function $v+\vep$ satisfies
 $$
 \F_k^s (v+\vep)(x_0) = v(x_0)- \varphi(x_0) <(v(x_0)+\vep) - \varphi(x_0).
 $$
Therefore, $v$ is a supersolution and by the comparison principle, it follows that 
 $u \leq v+\vep$  in $\Rn.$
 Similarly, we see that $v-\vep$ is a subsolution and $u \geq v - \vep$ in $\Rn.$ Hence,
 $$\|u-v\|_{L^\infty(\Rn)}\leq \vep,$$ 
 and letting $\vep\to0$, we get $u=v$ in $\Rn$, which is a contradiction.
 \end{proof}

To prove existence of a solution, we define
\begin{equation} \label{eq:perron}
u(x) = \sup_{v \in \S} v(x),
\end{equation}
where $\S$ is the set of admissible subsolutions given by
$$
\S =\big \{ v \in C^{0,1}(\Rn) : v~\hbox{subsolution}, \  \varphi \leq v \leq \varphi + w, \ \hbox{and} \ [v]_{C^{0,1}(\Rn)}\leq [\varphi]_{C^{0,1}(\R^n)} \big \}.
$$
Note that $\S \neq \emptyset$ since $\varphi \in \S$, and the supremum is finite since $v\leq \varphi+w$, for any $v\in \S$. Moreover, $u$ is convex and Lipschitz, with 
$$
[u]_{C^{0,1}(\Rn)} \leq [\varphi]_{C^{0,1}(\Rn)}.
$$
From $\varphi \leq u \leq \varphi+w$, and the upper bound for $w$ in Proposition~\ref{prop:tools}, it follows that
$$
0\leq (u- \varphi)(x) \leq w(x) \leq C(1+|x|)^{1-2s} \to 0,
$$
as $|x|\to \infty $ since $1-2s<0$.

\begin{prop} \label{prop:regu}
The function $u$ given in \eqref{eq:perron} is $C^{1,1}(\Rn)$ with 
$$
[u]_{C^{1,1}(\R^n)} \leq [\varphi]_{C^{1,1}(\R^n)}.
$$
\end{prop}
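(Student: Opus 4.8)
The plan is to control the pure second-order differences of $u$ and then invoke convexity. Writing $\Lambda=[\varphi]_{C^{1,1}(\R^n)}$, I would aim to show that
\begin{equation*}
u(x+he)+u(x-he)-2u(x)\le 2\Lambda h^2 \qquad\text{for all } x\in\R^n,\ |e|=1,\ h>0.
\end{equation*}
Once this is known, the statement follows from a standard fact about convex functions: mollifying, the same bound passes to $u*\eta_\rho$, so $0\le D^2(u*\eta_\rho)\le 2\Lambda I$, and Taylor's formula with integral remainder gives $0\le (u*\eta_\rho)(x_0+x)-(u*\eta_\rho)(x_0)-x\cdot\nabla(u*\eta_\rho)(x_0)\le\Lambda|x|^2$; letting $\rho\to0$ yields $[u]_{C^{1,1}(\R^n)}\le\Lambda$.

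The core step is a closedness property of the admissible class $\S$ under ``second-difference averaging''. Fixing $|e|=1$ and $h>0$, I would associate to each $v\in\S$ the functions $v_h(x)=\tfrac12\big(v(x+he)+v(x-he)\big)-\Lambda h^2$ and $\tilde v_h=\max\{\varphi,v_h\}$, and prove $\tilde v_h\in\S$. Here the structure of $\F_k^s$ enters: $v_h$ is continuous and convex, $\F_k^s$ is invariant under adding constants and under translating the argument, hence $\F_k^s\big(v(\cdot\pm he)\big)(x)=\F_k^s v(x\pm he)\ge v(x\pm he)-\varphi(x\pm he)$, and concavity of $\F_k^s$ (Lemma~\ref{lem:properties}$(c)$) gives $\F_k^s v_h(x)\ge\tfrac12\big(v(x+he)+v(x-he)\big)-\tfrac12\big(\varphi(x+he)+\varphi(x-he)\big)$; since $\tfrac12\big(\varphi(x+he)+\varphi(x-he)\big)\le\varphi(x)+\Lambda h^2$ by Definition~\ref{def:c11}, this is $\ge v_h(x)-\varphi(x)$, so $v_h$ is a subsolution, and $\tilde v_h$ is then a subsolution by Lemma~\ref{lem:maxsub}. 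Moreover $\tilde v_h$ is convex, $\varphi\le\tilde v_h$, and $[\tilde v_h]_{C^{0,1}(\R^n)}\le[\varphi]_{C^{0,1}(\R^n)}$ (Lipschitz constants of maxima, and of averages of translates, do not increase). For the remaining bound $\tilde v_h\le\varphi+w$ I would use $v\le\varphi+w$ and $w\ge0$ to obtain $v_h(x)\le\varphi(x)+\tfrac12\big(w(x+he)+w(x-he)\big)$, whose right-hand side tends to $\varphi(x)$ at infinity since $w(x)\le C(1+|x|)^{1-2s}\to0$ (Proposition~\ref{prop:tools}$(c)$); hence for every $\eta>0$, $\tilde v_h\le\varphi+w+\eta$ outside a large ball, and comparing $\tilde v_h$ with the supersolution $\varphi+w+\eta$ via the comparison principle (Proposition~\ref{prop:tools}$(a)$) gives $\tilde v_h\le\varphi+w+\eta$ on $\R^n$, so $\tilde v_h\le\varphi+w$ after $\eta\to0$. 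Thus $\tilde v_h\in\S$, whence $\tilde v_h\le u$ by \eqref{eq:perron}, and in particular $v_h\le u$, i.e. $\tfrac12\big(v(x+he)+v(x-he)\big)\le u(x)+\Lambda h^2$ for every $v\in\S$.

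Finally I would transfer this to $u$ itself using that $\S$ is stable under pairwise maxima (every defining property of $\S$ survives passing to the maximum of two members, by the same observations on Lipschitz constants and barriers and by Lemma~\ref{lem:maxsub}). Given $x$, $e$, $h$ and $\delta>0$, choose $v_1,v_2\in\S$ with $v_i(x\pm he)>u(x\pm he)-\delta$ — possible since $u=\sup_{v\in\S}v$ — and apply the previous inequality to $v=\max\{v_1,v_2\}\in\S$, which satisfies $v(x\pm he)>u(x\pm he)-\delta$; this yields $\tfrac12\big(u(x+he)+u(x-he)\big)-\delta<u(x)+\Lambda h^2$, and $\delta\to0$ gives the second-difference bound. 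The step I expect to be the main obstacle is precisely verifying $\tilde v_h\in\S$: the subsolution property of $v_h$ rests on combining concavity of $\F_k^s$ with its invariance under translations and additive constants and the quantitative $C^{1,1}$ bound on $\varphi$, while the pinching $\varphi\le\tilde v_h\le\varphi+w$ requires a comparison-at-infinity argument built on the decay of $w$; the rest is routine.
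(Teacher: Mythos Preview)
Your proposal is correct and follows essentially the same approach as the paper: both construct the second-difference average $\tfrac12\big(v(\cdot+x_1)+v(\cdot-x_1)\big)-c|x_1|^2$, show it is a subsolution via concavity and translation invariance of $\F_k^s$ together with the $C^{1,1}$ bound on $\varphi$, force it back into $\S$ by taking a maximum with $\varphi$ and applying the comparison principle (you introduce the small parameter $\eta$ on the supersolution side, the paper introduces an $\vep$ before taking the max---equivalent devices), and finish with the pairwise-maximum trick. Your added mollification step to pass from the second-difference bound to the $C^{1,1}$ seminorm is a detail the paper leaves implicit.
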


\begin{proof}
We will show that for any $x_0,x_1\in \Rn$,
$$
0\leq u(x_0+x_1) - u(x_0-x_1) - 2 u(x_0) \leq [\varphi]_{C^{1,1}(\R^n)} |x_1|^2.
$$
 Indeed, the lower bound follows from convexity of $u$. Hence, we only need to prove the upper bound. Call $M=[\varphi]_{C^{1,1}(\R^n)}$. Then
\begin{equation}\label{eq:barrier}
 \varphi(x_0+x_1) - \varphi(x_0-x_1) -M|x_1|^2 \leq 2 \varphi(x_0).
\end{equation}
 Take any $v \in \S$ and fix $x_1\in \Rn$. Define
 $$
 \hat{v}(x_0)= \tfrac{1}{2} \big( v(x_0+x_1)+v(x_0-x_1) - M|x_1|^2\big), \quad \hbox{for}~x_0\in \Rn.
 $$
We claim that $\hat v$ is a subsolution to  $\F_k^s u = u-\varphi$ in $\Rn$.
Indeed, since $\F_k^s$ is homogeneous of degree 1, concave, and translation invariant (see Lemma~\ref{lem:properties}), we have
\begin{align*}
\F_k^s \hat{v}(x_0) &= \F_k^s \Big (\tfrac{1}{2}v(x_0+x_1)+\tfrac{1}{2}v(x_0-x_1)\Big)\\
&\geq \tfrac{1}{2} \F_k^s v(x_0+x_1)+ \tfrac{1}{2} \F_k^s v(x_0-x_1)\\
&\geq \tfrac{1}{2} \big(v(x_0+x_1)- \varphi(x_0+x_1) + v(x_0-x_1)-\varphi(x_0-x_1) \big)\\
&= \tfrac{1}{2}\big(v(x_0+x_1)-v(x_0-x_1)- M |x_1|^2\big)- \tfrac{1}{2}\big( \varphi(x_0+x_1)+\varphi(x_0-x_1)-M|x_1|^2\big)\\
& \geq \hat v(x_0)- \varphi(x_0).
\end{align*}
Moreover, using that $v\leq\varphi+w$, we get
\begin{align*}
\hat{v}(x_0)
&\leq \tfrac{1}{2} \big(\varphi(x_0+x_1) +  \varphi(x_0-x_1) - M |x_1|^2\big) + \tfrac{1}{2}\big( w(x_0+x_1)+ w(x_0-x_1) \big).
\end{align*}
By \eqref{eq:barrier} and the upper bound of $w$ in Proposition~\ref{prop:tools}, part (c), we see that
$$
\hat{v}(x_0) - \varphi(x_0) \leq  \tfrac{C}{2}( 1+|x_0+x_1|^{1-2s}) + \tfrac{C}{2}(1+ |x_0-x_1|)^{1-2s}) \to 0,
$$
as $|x_0|\to \infty$ and $x_1$ fixed, since $1-2s<0$. Then for all $\vep>0$, there is some compact set $\Omega_\vep$, depending on $\vep$ and $x_1$, such that
$$
\hat{v}(x_0) - \vep \leq \varphi(x_0), \quad \hbox{for all}~x_0 \in \Rn \setminus \Omega_\vep.
$$
Consider $\hat v_\vep= \max\{ \hat v - \vep, \varphi\}$. Then $\hat v_\vep$ is a subsolution, since the maximum of subsolutions is a subsolution (see Lemma~\ref{lem:maxsub}). Also, $\hat{v}_\vep = \varphi \leq \varphi+w$ in $\R^n\setminus \Omega_\vep$, and $\varphi+w$ is a supersolution by Proposition~\ref{prop:tools}, part (c). Applying the comparison principle, we get $\varphi\leq \hat{v}_\vep \leq \varphi + w$. Moreover,  $[\hat v_\vep]_{C^{0,1}(\R^n)} \leq [\varphi]_{C^{0,1}(\R^n)}$. Therefore, $\hat v_\vep \in \S$. 

Since $u(x_0)= \sup_{v\in \S} v(x_0)$, it follows that 
$
u(x_0) \geq \hat v_\vep(x_0) \geq \hat v (x_0) - \vep.
$
Letting $\vep \to 0$, we conclude that for any $v\in \S$ and $x_0, x_1 \in \Rn$,
\begin{equation}\label{eq:anysub}
u(x_0) \geq \tfrac{1}{2}\big( v(x_0+x_1)+v(x_0-x_1) - M |x_1|^2 \big) .
\end{equation}
Finally, by definition of supremum, for any $\delta>0$, and $x_0,x_1\in \Rn$, there exist $v_1,v_2 \in \S$ such that $u(x_0+x_1) -\delta < v_1(x_0+x_1)$ and $u(x_0-x_1) - \delta < v_2 (x_0-x_1)$. Let $v=\max \{v_1, v_2\}$. Then using \eqref{eq:anysub} for this $v$, we get
$$
u(x_0) \geq \tfrac{1}{2}\big ( u(x_0+x_1)-\delta+u(x_0-x_1)-\delta - M |x_1|^2 \big).
$$
Letting $\delta \to 0$, we conclude that
$$
 u(x_0+x_1) - u(x_0-x_1) - 2 u(x_0) \leq [\varphi]_{C^{1,1}(\R^n)} |x_1|^2.
$$
\end{proof}

To complete the proof of Theorem~\ref{thm:mainPP}, it remains to see that $u$ is a solution. Hence, we need to show that $u$ is both a subsolution and a supersolution. We will prove these results in the next two propositions. 

\begin{lem} \label{lem:compact}
For any $x_0\in \Rn$ and  $\vep>0$, the set 
$$D_{x_0} u(\vep)=\big\{ x \in \R^n : u(x)-u(x_0) - (x-x_0) \cdot \nabla u (x_0) \leq \vep\big\}$$ 
is compact.
\end{lem}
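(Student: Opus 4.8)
The plan is to show that $D:=D_{x_0}u(\vep)$ is closed and bounded. Closedness is immediate, since $u$ is continuous (indeed $u\in C^{1,1}(\R^n)$ by Proposition~\ref{prop:regu}), so $D$ is a sublevel set of a continuous function. Moreover $D$ is convex, being a sublevel set of the convex function $x\mapsto u(x)-u(x_0)-(x-x_0)\cdot\nabla u(x_0)$, and $x_0\in D$. A nonempty closed convex set is compact precisely when it contains no half-line, and since $x_0\in D$ an unbounded $D$ would contain a half-line based at $x_0$; hence it suffices to rule out the existence of a unit vector $\omega$ with $x_0+[0,\infty)\omega\subset D$.

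Suppose such an $\omega$ exists; write $p_0=\nabla u(x_0)$ and $g(r)=u(x_0+r\omega)-u(x_0)-r\,\omega\cdot p_0$. Convexity of $u$ gives $g\geq0$, while $x_0+r\omega\in D$ gives $g(r)\leq\vep$ for all $r\geq0$. The idea is to sandwich $\varphi$ along the ray: from $\varphi\leq u$ (established before Proposition~\ref{prop:regu}) one gets $\varphi(x_0+r\omega)\leq u(x_0)+r\,\omega\cdot p_0+\vep$, and from the upper barrier $u\leq\varphi+w$ of Proposition~\ref{prop:tools}(c) together with $g\geq0$ one gets $\varphi(x_0+r\omega)\geq u(x_0)+r\,\omega\cdot p_0-w(x_0+r\omega)$. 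Setting $\psi(r)=\varphi(x_0+r\omega)-r\,\phi(\omega)$ and recalling $0\leq w(x)\leq C(1+|x|)^{1-2s}\to0$, these read
\begin{equation*}
u(x_0)-w(x_0+r\omega)+r\bigl(\omega\cdot p_0-\phi(\omega)\bigr)\ \leq\ \psi(r)\ \leq\ u(x_0)+\vep+r\bigl(\omega\cdot p_0-\phi(\omega)\bigr),\qquad r\geq0.
\end{equation*}
The crucial input is strict convexity of $\varphi$: along the line through $x_0$ in direction $\omega$, the map $r\mapsto\nabla\varphi(x_0+r\omega)\cdot\omega$ is strictly increasing and bounded (since $\varphi$ is Lipschitz), hence has a finite limit; averaging the identity $\varphi(x_0+r\omega)-\varphi(x_0)=\int_0^r\nabla\varphi(x_0+t\omega)\cdot\omega\,dt$ and using $\varphi-\phi\to0$ identifies that limit with $\phi(\omega)=\lim_{r\to\infty}\varphi(x_0+r\omega)/r$. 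Thus $\nabla\varphi(x_0+r\omega)\cdot\omega<\phi(\omega)$ for all $r$, so $\psi'(r)<0$: $\psi$ is strictly decreasing. It is also bounded below, because $\varphi\geq\phi-M$ for some constant $M$ (as $\varphi-\phi$ is continuous and vanishes at infinity) and $\phi(x_0+r\omega)\geq r\phi(\omega)-\phi(-x_0)$ by subadditivity of the cone $\phi$, whence $\psi(r)\geq -M-\phi(-x_0)$.

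To finish: a strictly decreasing, bounded-below function on $[0,\infty)$ converges to a finite limit, so it tends to neither $+\infty$ nor $-\infty$; inspecting the displayed inequalities, this forces $\omega\cdot p_0=\phi(\omega)$ (otherwise $\psi$ would be squeezed to $\pm\infty$). With this equality, the left inequality and $w\to0$ give $\lim_{r\to\infty}\psi(r)=\liminf_{r\to\infty}\psi(r)\geq u(x_0)$, while strict monotonicity gives $\psi(0)>\lim_{r\to\infty}\psi(r)$; hence $\varphi(x_0)=\psi(0)>u(x_0)\geq\varphi(x_0)$, a contradiction. Therefore $D$ contains no half-line, so it is bounded, and being closed it is compact. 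The main obstacle in writing this out is the middle step — turning strict convexity of $\varphi$ into the strict monotonicity of $\psi$ with limit exactly $\phi(\omega)$ — and keeping the two asymptotic inputs $\varphi-\phi\to0$ and $w\to0$ properly coordinated; everything else is bookkeeping.
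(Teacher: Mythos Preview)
Your proof is correct. Both arguments obtain a contradiction by producing a unit direction $\omega$ with $\omega\cdot\nabla u(x_0)=\phi(\omega)$ and then invoking the strict convexity of $\varphi$, but the paper organizes this differently. The paper first encloses $D_{x_0}u(\vep)\subset\{\phi<l+\vep\}$ (with $l$ the tangent plane of $u$ at $x_0$) and then proves $\phi-l\to\infty$ via a sequential argument using $(u-\phi)\to0$; this forces a case split according to whether $u(x_0)=0$ or $u(x_0)>0$. Your route---assuming a half-line in $D$, sandwiching $\varphi$ along it between $u-w$ and $u+\vep$, and showing that $\psi(r)=\varphi(x_0+r\omega)-r\phi(\omega)$ is strictly decreasing with limit $\geq u(x_0)$---avoids that case split and uses strict convexity of $\varphi$ more directly (through the strict monotonicity of $\nabla\varphi\cdot\omega$ along the ray rather than through the strict inequality $\phi<\varphi$). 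The underlying ingredients (the barrier $\varphi\le u\le\varphi+w$, the asymptotics $\varphi-\phi\to 0$ and $w\to0$, and strict convexity of $\varphi$) are the same in both proofs.
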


\begin{proof}
Let $x_0\in \Rn$ and $\vep >0$. Without loss of generality, we may assume that $x_0=0$. Let $l$ be the supporting plane of $u$ at $0$, that is, 
$l(x)=u(0)+x \cdot \nabla u(0).$
Clearly, $D_{x_0} u(\vep)$ is closed. Hence, we only need to show that it is bounded. Recall that 
\begin{equation}\label{eq:strictsep}
\phi(x) < \varphi(x) \leq u(x), \quad \hbox{for all}~x\in \Rn,
\end{equation}
where $\phi$ is a cone. 
Note that the strict inequality in \eqref{eq:strictsep} follows from the strict convexity of $\varphi$. 
Moreover, by \eqref{eq:PP} and \eqref{eq:cone}, we have
$$
\lim_{|x|\to\infty} (u-\phi)(x) = 0.
$$
Therefore, $D_{x_0} u(\vep) \subset \{ \phi < l +\vep \}.$
We claim that
\begin{equation}\label{eq:limitcond}
\lim_{|x|\to\infty} (\phi-l)(x) =\infty.
\end{equation}
If this condition holds, then for all $M>0$, there exists $R>0$, such that 
$$
\phi(x) - l(x) > M, \quad \hbox{for all}~|x|>R.
$$
Choosing $M=\vep$, we see that $\{\phi < l+\vep\}\subset B_R$, for some $R$ depending on $\vep$. Hence, the set $D_{x_0} u(\vep)$  is  bounded.

To prove the claim, we distinguish two cases. If $u(0)=0$, then $u$ attains an absolute minimum at $0$, so $\nabla u(0)=0$. In particular, $l(x)=0$, for all $x\in \Rn$, and thus, \eqref{eq:limitcond} is clearly satisfied.
Hence, it remains to show the claim when 
$$
u(0)>0.
$$
We will prove it by contradiction. If \eqref{eq:limitcond} is not true, then there exists a sequence of points $\{x_j\}_{j=1}^\infty \subset \Rn$  such that $|x_j|\to\infty$, as $j\to\infty$, and
$$
\lim_{j\to\infty} ( \phi-l ) (x_j)  <\infty. 
$$
Using that $\phi$ is continuous and homogeneous of degree 1, and letting $j\to \infty$, we get
$$
\tfrac{\phi(x_j)}{|x_j|} - \tfrac{l(x_j)}{|x_j|} = \phi \big( \tfrac{x_j}{|x_j|}\big) - \tfrac{u(0)}{|x_j|} -   \tfrac{x_j}{|x_j|} \cdot \nabla u(0) \to \phi(e) - D_e u (0) =0,
$$
where $x_j/|x_j|\to e$, up to a subsequence. Therefore, $\phi(e)=D_e u(0)$. For any $\lambda>0$, we have
$$
l(\lambda e) = u(0) + \lambda e \cdot \nabla u(0)   =u(0) + \lambda   \phi( e) = u(0) +    \phi(\lambda e).
$$
Since $l$ is a supporting plane of $u$, we know that $u(x) \geq l(x)$, for all $x\in \Rn$, and thus,
$$
u(\lambda e) \geq l(\lambda e) = \phi(\lambda e) + u(0).
$$
Letting $\lambda\to \infty$,  we see that
$$
0 = \lim_{\lambda \to \infty} (u- \phi) (\lambda e)  \geq u(0) >0,
$$
which is a contradiction.

\end{proof}

\begin{prop}[$u$ is a subsolution] \label{prop:subsolution}
The function $u$ given in \eqref{eq:perron} satisfies
$$
\F_k^s u (x_0) \geq u(x_0) - \varphi(x_0), \quad \hbox{for all}~x_0 \in \Rn.
$$
\end{prop}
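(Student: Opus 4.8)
The plan is to reduce the pointwise inequality to a statement about limits of subsolutions and then exploit the regularity of $u$ just obtained. By Proposition~\ref{prop:regu} we have $u\in C^{1,1}(\R^n)$, so $\F_k^s u(x_0)$ is defined classically (Definition~\ref{def:IO}) and we only need the bound $\F_k^s u(x_0)\ge u(x_0)-\varphi(x_0)$. If the supremum in \eqref{eq:perron} is attained at $x_0$ --- that is, $v(x_0)=u(x_0)$ for some $v\in\S$ --- we finish at once: since $u\ge v$ on $\R^n$ and $u(x_0)=v(x_0)$, the monotonicity in Lemma~\ref{lem:properties}(b) gives $\F_k^s u(x_0)\ge \F_k^s v(x_0)\ge v(x_0)-\varphi(x_0)=u(x_0)-\varphi(x_0)$.

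For the general case I would pick $v_j\in\S$ with $v_j(x_0)\to u(x_0)$ and, replacing $v_j$ by $\max\{v_1,\dots,v_j\}$ (still in $\S$ by Lemma~\ref{lem:maxsub} and preserving the constraints), assume the sequence nondecreasing. The $v_j$ are equi-Lipschitz, with constant at most $[\varphi]_{C^{0,1}(\R^n)}$, and locally uniformly bounded since $\varphi\le v_j\le\varphi+w$; by Arzel\`a--Ascoli, $v_j\to\bar v$ locally uniformly, where $\bar v$ is convex, $[\bar v]_{C^{0,1}(\R^n)}\le[\varphi]_{C^{0,1}(\R^n)}$, $\varphi\le\bar v\le\varphi+w$, $\bar v\le u$ on $\R^n$, and $\bar v(x_0)=u(x_0)$. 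Because $\bar v\le u$ with equality at $x_0$ and $u$ is differentiable there, one gets $\partial\bar v(x_0)=\{\nabla u(x_0)\}$ and $\bar v(x_0+y)-\bar v(x_0)-y\cdot\nabla u(x_0)\le u(x_0+y)-u(x_0)-y\cdot\nabla u(x_0)\le[u]_{C^{1,1}(\R^n)}|y|^2$ for small $|y|$; hence $\bar v\in C^{1,1}(x_0)$ and $\F_k^s\bar v(x_0)$ is finite. Once we know $\F_k^s\bar v(x_0)\ge\bar v(x_0)-\varphi(x_0)$, a second use of monotonicity ($\bar v\le u$, $\bar v(x_0)=u(x_0)$) yields $\F_k^s u(x_0)\ge\F_k^s\bar v(x_0)\ge\bar v(x_0)-\varphi(x_0)=u(x_0)-\varphi(x_0)$, which is what we want.

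Everything therefore reduces to: the monotone locally uniform limit $\bar v$ of the subsolutions $v_j$ is a subsolution at $x_0$, and this is the main obstacle. The difficulty is that $\F_k^s v_j(x_0)$ need not stay bounded --- if the $v_j$ develop a corner at $x_0$, the quantity $\F_k^s v_j(x_0)$ may be $+\infty$ --- so one cannot naively pass $\F_k^s v_j(x_0)\ge v_j(x_0)-\varphi(x_0)$ to the limit. To get around this I would fix $K\in\K_k^s$ and, using that $\partial\bar v(x_0)$ is a singleton, aim at showing $\int_{\Rn}\tilde{\bar v}(y)K(y)\,dy\ge (\bar v(x_0)-\varphi(x_0))/c_{n,s}$, with $\tilde{\bar v}(y)=\bar v(x_0+y)-\bar v(x_0)-y\cdot\nabla u(x_0)$. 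For each $j$ one selects a near-optimal $q_j\in\partial v_j(x_0)$ with $\int(v_j(x_0+y)-v_j(x_0)-y\cdot q_j)K(y)\,dy\ge (v_j(x_0)-\varphi(x_0))/c_{n,s}-1/j$; one has $q_j\to\nabla u(x_0)$ and $0\le v_j(x_0+y)-v_j(x_0)-y\cdot q_j\le \tilde u(y)+\vep_j(1+|y|)$ with $\vep_j\to0$. Splitting $\int_{\Rn}=\int_{B_\rho}+\int_{\Rn\setminus B_\rho}$: on $\Rn\setminus B_\rho$ the bound $\tilde u(y)+\vep_j(1+|y|)$ is dominated, uniformly in $j$ for $j$ large, by an element of $L^1(K\,dy)$ --- this is exactly where $s>1/2$ enters --- so dominated convergence gives $\int_{\Rn\setminus B_\rho}(v_j(x_0+\cdot)-v_j(x_0)-(\cdot)\cdot q_j)K\to\int_{\Rn\setminus B_\rho}\tilde{\bar v}\,K$; on $B_\rho$ the integrands are nonnegative and $\int_{B_\rho}\tilde{\bar v}\,K\le\int_{B_\rho}\tilde u\,K\to 0$ as $\rho\to0$ (here one uses $\tilde{\bar v}\lesssim|y|^2$ near $0$ and $s<1$), and the delicate point is to control the $B_\rho$-contribution of the $v_j$ against the singular part of $K$ uniformly along the sequence. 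Carrying out this last estimate (following the geometric ideas of \cite{CS}) and then letting $j\to\infty$ and $\rho\to0$ gives $\F_k^s\bar v(x_0)\ge\bar v(x_0)-\varphi(x_0)$.

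An alternative route, arguing by contradiction directly on $u$, may be cleaner in practice. Suppose $\F_k^s u(x_1)<u(x_1)-\varphi(x_1)$ for some $x_1$. First one checks that $\F_k^s u$ is continuous near $x_1$: the hypotheses of Theorem~\ref{thm:regularity} hold, since $\Lambda=\sup_{x\in\overline{B_{r_0}(x_1)}}\diam D_x u(\vep)<\infty$ by Lemma~\ref{lem:compact} (its proof shows $D_x u(\vep)\subset\{\phi<l_x+\vep\}$, the supporting planes $l_x$ having locally bounded slope and height and $\phi-l_x\to\infty$), and $M=\sup_{\overline{B_{r_0}(x_1)}}\F_k^s u<\infty$ by the estimate in the proof of Proposition~\ref{prop:finite} (convexity gives $(P_2)$, and the at-most-linear growth of $u$, trapped between $\varphi$ and $\varphi+w$, gives $(P_1)$, with constants uniform on compacts). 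Hence $\F_k^s u\le u-\varphi-\delta$ on a ball $B_\eta(x_1)$, and one seeks a contradiction with $u=\sup_{\S}v$ by producing a competitor $\bar u\in\S$ with $\bar u(x_1)>u(x_1)$, built by lifting $u$ inside $B_\eta(x_1)$: the slack $\delta$ is what allows the upward perturbation to keep the subsolution inequality in $B_\eta(x_1)$, while the comparison principle (Proposition~\ref{prop:tools}(a)) against the supersolution $\varphi+w$ restores the upper constraint globally. In this approach the crux is the construction of the lifted competitor while keeping it convex, Lipschitz with the same constant, and a genuine subsolution --- which is again a near-origin/localization issue and is the step I expect to be hardest.
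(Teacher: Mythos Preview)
Your reduction in the first paragraph---touching $u$ from below by the limit $\bar v$ of subsolutions and invoking monotonicity---is sound, and the identification of the obstacle (controlling $\int_{B_\rho}(v_j(x_0+y)-v_j(x_0)-y\cdot q_j)\,K(y)\,dy$ uniformly in $j$) is exactly right. But the step you leave to ``the geometric ideas of \cite{CS}'' is not the integral--splitting argument you outline, and that argument cannot be completed as written. On $B_\rho$ your only pointwise bound is $g_j(y)\le \tilde u(y)+\vep_j(1+|y|)$; since the $v_j$ are merely Lipschitz at $x_0$, the term $\vep_j|y|$ is unavoidable, and for $s>1/2$ one has $\int_{B_\rho}|y|\,|y|^{-n-2s}\,dy=\infty$. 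Thus no dominated--convergence or reverse--Fatou argument at $x_0$ can close the gap: the subsolution inequality $\F_k^s v_j(x_0)\ge v_j(x_0)-\varphi(x_0)$ may be vacuous ($\F_k^s v_j(x_0)=+\infty$) along the whole sequence.

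The paper avoids this by never evaluating $\F_k^s v$ at $x_0$. Instead it slides the touching paraboloid $P(x)=u(x_0)+\nabla u(x_0)\cdot(x-x_0)+|x-x_0|^2$ downward until $P-d$ touches $v$ from above at a nearby point $x_1$; there $v$ is genuinely $C^{1,1}$ with a unique supporting plane $l$. One then takes $x_2$ where $l+\tau$ supports $u$, uses the level--set representation (Proposition~\ref{prop:repW}) together with $D_{x_2}u(t)\subset D_{x_1}v(t+\tau)$ to get $\F_k^s u(x_2)\ge \F_k^s v(x_1)-C\tau^{1-s}$, and finally the H\"older estimate (Proposition~\ref{prop:lem4.6}) to return from $x_2$ to $x_0$. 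The key idea you are missing is this \emph{shift to a nearby touching point where the approximating subsolution has quadratic behavior}.

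Your second route is miswired: the ``contradiction plus upward lift'' pattern belongs to the \emph{supersolution} half of Perron's method, not the subsolution half. The hypothesis $\F_k^s u\le u-\varphi-\delta$ on $B_\eta(x_1)$ says $u$ is a strict supersolution there; it gives you no mechanism to produce a \emph{larger} subsolution, because outside $B_\eta(x_1)$ you would still need $\F_k^s\bar u\ge\bar u-\varphi$, and you have not yet established that $u$ itself is a subsolution anywhere. The slack $\delta$ is on the wrong side of the inequality to absorb an upward bump.
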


\begin{proof}
By Proposition~\ref{prop:regu}, we know that $u\in C^{1,1}(\Rn)$. 
Without loss of generality, we may assume that $[u]_{C^{1,1}(\Rn)}=1$. Otherwise, consider $u/[u]_{C^{1,1}(\Rn)}$.

Let $x_0 \in \Rn$. Then the quadratic polynomial
$$
P(x)=u(x_0)+\nabla u(x_0) \cdot (x-x_0) + |x-x_0|^2
$$
touches $u$ from above at $x_0$. Moreover, we may assume that $P$ touches $u$ strictly from above at $x_0$. If not, we replace $P$ by $P+\vep|x-x_0|^2$ with $\vep>0$ small. 

Fix $\delta >0$. Then there exists $h>0$, with $h\to 0$ as $\delta\to 0$, such that
$$
P(x) - u(x) \geq h >0, \quad \hbox{for all}~x\in \Rn \setminus B_\delta (x_0).
$$
Since $u(x)=\sup_{v\in \S} v(x)$ and $v\in \S$ is uniformly continuous, 
there is a monotone sequence $\{v_j\}_{j=1}^\infty \subset \S$ such that $v_j \to u$ uniformly in compact subsets of $\Rn$. In particular, there exists $j_0\geq 1$, depending on $h$, such that for all $j> j_0$,
\begin{equation} \label{eq:uniformconv}
u(x) - h < v_j(x), \quad \hbox{for all}~x\in \overline{B_\delta(x_0)}.
\end{equation}
Call $v=v_j$ for some $j>j_0$. It follows that  
$$
\begin{cases}
P- v\geq h &  \hbox{in}~\Rn\setminus B_\delta(x_0)\\
P-v < P-u + h & \hbox{in}~B_\delta(x_0).
\end{cases}
$$
Let $d= \inf_{\Rn} (P-v)$. Then $d=P(x_1)-v(x_1)$, for some $x_1 \in \overline{B_h(x_0)}$, with $0\leq d<h$, and
$$
\begin{cases}
P(x_1) - d = v(x_1) &\\
P(x) - d \geq v(x), & \hbox{for all}~x\in \Rn.
\end{cases}
$$
Hence, $P-d$ is a quadratic polynomial that touches $v$ from above at $x_1$. In particular, since $v$ is convex, then $v$ has a unique supporting plane $l$ at $x_1$, so $\partial v (x_1) = \{ \nabla l\}$.

Let $\tau \geq 0$ be such that $l+\tau$ is the supporting plane of $u$ at some point $x_2$. Note that $x_2$ approaches $x_0$ as $h$ goes to 0, and thus, there exists some $r=r(h)>0$ such that $r\to 0$, as $h\to0$, and $x_2 \in B_{r}(x_0)$. Furthermore, since
$
l(x_1)+d=v(x_1)+d = P(x_1) \geq u(x_1),
$
then $\tau \leq  d<h$ (see Figure~\ref{fig:parabolas}).
\begin{figure}[h]
\begin{tikzpicture}[domain=-4:4, scale = 2]
  \draw[black, line width = 0.05mm, dash pattern=on 1pt off 1pt]   plot[smooth,domain=0.5:1.5] (\x, {5*(\x-1.002)^2-0.515}); %P-d
    \draw[black, line width = 0.1mm]   plot[smooth,domain=-1:2.5] (\x, {0.3*(\x-0.5)^2-0.6}); %v
       \draw[black, line width = 0.1mm]   plot[smooth,domain=-1:2] (\x, {0.37*(\x-0.25)^2-0.25}); %u
     \draw[black, line width = 0.05mm]   plot[smooth,domain=0.5:1.5] (\x, {5*(\x-1.002)^2-0.017}); %P
       \draw[black, line width = 0.05mm]   plot[smooth,domain=-1:3] (\x,{0.306*(\x-1.01)-0.52}); %l
           \draw[black, line width = 0.05mm, dash pattern=on 1pt off 1pt]   plot[smooth,domain=-1:3] (\x,{0.306*(\x-1.01)-0.085}); %l+tau
      \draw [fill=black] (1.06,0) circle (0.5pt); %x_0
         \draw [fill=black] (1.015,-0.52) circle (0.5pt); %x_1
            \draw [fill=black] (0.7,-0.175) circle (0.5pt); %x_2
            
            \draw[color=black] (2,1) node {\small $u$};
             \draw[color=black] (2.5,0.7) node {\small $v$};
              \draw[color=black] (0.37,1.2) node {\small $P$};
		\draw[color=black] (0.2,0.6) node {\small $P-d$};
		\draw[color=black] (-0.9,-0.97) node {\small $l$};
			\draw[color=black] (-0.8,-0.5) node {\small $l+\tau$};
			\draw[color=black] (2.8,-2) node {\small $\mathbb{R}^n$};
			
%%%%PROJECTION%%%%

\draw[black, line width = 0.1mm]  (-1,-2.4) -- (2,-2.4);
\draw[black, line width = 0.1mm]  (0,-1.5) -- (3,-1.5);	
\draw[black, line width = 0.1mm]  (-1,-2.4) -- (0,-1.5);		
\draw[black, line width = 0.1mm]  (2,-2.4) -- (3,-1.5);	
\draw[black, line width = 0.1mm, dash pattern=on 0.5pt off 0.5pt]  (1.06,0)  --(1.06,-1.9);	
      \draw [fill=black] (1.06,-1.9) circle (0.4pt); %x_0
                  \draw[color=black] (1.2,-1.8) node {\footnotesize $x_0$};
         \draw [fill=black] (1.015,-0.52-1.6) circle (0.4pt); %x_1
                           \draw[color=black] (1.18,-0.52-1.55) node {\footnotesize $x_1$};
            \draw [fill=black] (0.7,-0.175-1.8) circle (0.4pt); %x_2	
                                       \draw[color=black] (0.7,-0.175-1.7) node {\footnotesize $x_2$};                               
\draw[black, line width = 0.1mm, dash pattern=on 1pt off 1pt, fill=black,fill opacity=0.15]   (1.06,-1.9)  ellipse (0.5cm and 0.3cm);
 \draw[color=black] (1.85,-1.8) node {\footnotesize $B_r(x_0)$};         
\end{tikzpicture}
\caption{Geometry involved in the proof of Proposition~\ref{prop:subsolution}.}
\label{fig:parabolas}
\end{figure}
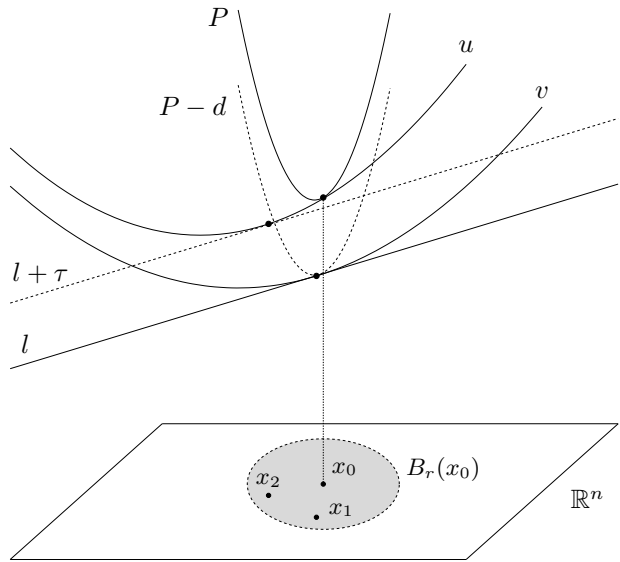

Fix $\vep>0$. By Lemma~\ref{lem:compact}, we have that $D_{x_0} u(\vep)$ is bounded, so $\Lambda =\diam D_{x_0} u(\vep) <\infty$. 
Choose $\delta$ sufficiently small, so that $r<\vep/(4\Lambda)$. Then by Proposition~\ref{prop:lem4.6}, it holds that
\begin{equation}\label{eq:bound1}
\F_k^s u(x_2)  \leq \F_k^s u(x_0)+  C \Lambda^{1-s} |x_2-x_0|^{1-s} + \tfrac{4\Lambda}{\vep}  \F_k^s u(x_0) |x_2-x_0| 
\leq \F_k^s u(x_0) + C(r),
\end{equation}
where $C(r)\to 0$, as $r\to 0$.
Next we will show that
\begin{equation}\label{eq:bound2}
 \F_k^s v(x_1) - C \tau^{1-s} \leq \F_k^s u(x_2)
\end{equation}
for some constant $C>0$ depending only on $n$, $k$, and $s$.
Since $\partial v(x_1) = \{\nabla l\}$, then $v \in C^{1,1}(x_1)$, and using Proposition~\ref{prop:repW}, we get
$$
 \F_k^s v(x_1) = c_{n,s} \int_0^\infty \int_{\R^{n-k}} \frac{1}{|z|^{n-k+2s}} W \Big( \frac{\mu_{x_1} v (t,z)^{1/k}}{|z|}\Big) \, dz dt ,
$$
where $\mu_{x} v (t,z)=\omega_k^{-1} \H^k \big( \{ y \in \R^k :  \tilde{v}_x (y,z) \leq t \} \big)$, and $W$ is the monotone decreasing function given in \eqref{eq:W}.
Observe that since $v\leq u$, $l$ is the supporting plane of $v$ at $x_1$, and $l+\tau$ is the supporting plane of $u$ at $x_2$, then for any $t>0$, it follows that
$$
D_{x_2} u(t) = \{ u-(l+\tau) \leq t\} \subseteq \{ v -l \leq t+\tau\}= D_{x_1} v(t+\tau).
$$
In particular, $\mu_{x_2} u(t,z) \leq \mu_{x_1} v(t+\tau,z)$, for any $z\in \R^{n-k}$. Therefore,
$$
W(\mu_{x_2} u(t,z)) \geq W(\mu_{x_1} v(t+\tau,z)),
$$
which yields
\begin{align*}
 \F_k^s u(x_2) & \geq c_{n,s} \int_\tau^\infty \int_{\R^{n-k}} \frac{1}{|z|^{n-k+2s}} W \Big( \frac{\mu_{x_1} v (t,z)^{1/k}}{|z|}\Big) \, dz dt\\
 &=  \F_k^s v(x_1) - c_{n,s} \int_0^\tau \int_{\R^{n-k}} \frac{1}{|z|^{n-k+2s}} W \Big( \frac{\mu_{x_1} v (t,z)^{1/k}}{|z|}\Big) \, dz dt\\
& \geq  \F_k^s v(x_1) - C \tau^{1-s},
\end{align*}
where the last inequality follows from the fact that $ \mu_{x_1} v(t,z) \geq C(t-|z|^2)_+^{k/2}$ and $W$ is monotone decreasing.

Combining \eqref{eq:bound1} and \eqref{eq:bound2}, using that $v$ is a subsolution, and \eqref{eq:uniformconv}, we get
\begin{align*}
\F_k^s u(x_0) + C(r) & \geq \F_k^s v(x_1)  - C \tau^{1-s} \geq v(x_1) - \varphi(x_1) - C\tau^{1-s} \\
&> u(x_1)-h - \varphi(x_1) - C\tau^{1-s}.
\end{align*}
Letting $\delta\to 0$, it follows that $h\to0$, $C(r)\to 0$, $\tau \to 0$, and $x_1\to x_0$. By continuity of $u$ and $\varphi$, we conclude the result.
\end{proof}

\begin{prop}[$u$ is a supersolution]
The function $u$  given in \eqref{eq:perron}  satisfies 
$$
\F_k^s u (x_0) \leq u(x_0) - \varphi(x_0), \quad \hbox{for all}~x_0 \in \Rn.
$$
\end{prop}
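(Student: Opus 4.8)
The plan is to argue by contradiction: suppose there exists $x_0$ with $\F_k^s u(x_0) > u(x_0) - \varphi(x_0)$, and use this strict inequality to construct a competitor subsolution strictly larger than $u$ somewhere, contradicting the maximality of $u$ in \eqref{eq:perron}. Set $\eta = \F_k^s u(x_0) - (u(x_0) - \varphi(x_0)) > 0$. Since $u \in C^{1,1}(\R^n)$ by Proposition~\ref{prop:regu}, and since $\F_k^s$ is lower semicontinuous (Lemma~\ref{lem:properties}(d)) and in fact locally H\"older continuous near $x_0$ by Theorem~\ref{thm:regularity} (note $D_{x_0}u(\vep)$ is compact by Lemma~\ref{lem:compact}, so the hypotheses of that theorem hold on a small ball with $M = \sup_{B_{r_0}(x_0)} \F_k^s u < \infty$, this finiteness itself following because $u$ is a bounded perturbation of $\varphi$ with the same $C^{1,1}$ bound), the function $x \mapsto \F_k^s u(x) - (u(x) - \varphi(x))$ is continuous near $x_0$ and hence stays above $\eta/2$ on some ball $\overline{B_\rho(x_0)}$.

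Next I would perturb $u$ upward inside $B_\rho(x_0)$ by a small bump that preserves convexity and the $C^{1,1}$ bound. Concretely, let $l$ be the supporting plane of $u$ at $x_0$ and consider, for small $\beta > 0$, the function $w_\beta = \max\{u,\, l + \beta\, \zeta\}$ where $\zeta$ is a fixed smooth convex-ish cutoff (or more simply, compare $u$ with a slightly translated-up version of a supporting cone); the cleaner route is to take $\bar v = \max\{u, \tilde u\}$ where $\tilde u$ is $u$ shifted up by a controlled amount only where this keeps it below $u$ outside $B_\rho$. The key point: we want a convex $\bar v \in C^{0,1}(\R^n)$ with $\varphi \le \bar v \le \varphi + w$, $[\bar v]_{C^{0,1}} \le [\varphi]_{C^{0,1}}$, $\bar v \equiv u$ outside $B_\rho(x_0)$, $\bar v(x_0) > u(x_0)$, and $\bar v$ still a subsolution. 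The subsolution property is where the strict gap $\eta$ is spent: for points $y$ where $\bar v(y) > u(y)$ we have $y \in B_\rho(x_0)$ and $\bar v(y)$ close to $u(y)$, while $\F_k^s \bar v(y) \ge \F_k^s u(y) - o(1)$ by monotonicity together with the quantitative H\"older estimate of Proposition~\ref{prop:lem4.6} applied along the interpolation between the supporting planes of $u$ and $\bar v$; choosing the bump small enough the loss in $\F_k^s$ and the gain in $\bar v$ are both $\ll \eta/2$, so $\F_k^s \bar v(y) \ge \bar v(y) - \varphi(y)$ still holds on $B_\rho$, and outside $B_\rho$ we have $\bar v = u$ which is already a subsolution (being, e.g., bounded below by all $v \in \S$ and above the barrier — more precisely one checks $u$ is a subsolution directly via $u = \sup v_j$ and lower semicontinuity). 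Hence $\bar v \in \S$, but $\bar v(x_0) > u(x_0) = \sup_{v \in \S} v(x_0)$, a contradiction.

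The main obstacle is the construction of the upward perturbation $\bar v$ that simultaneously (i) stays convex, (ii) respects the Lipschitz bound $[\varphi]_{C^{0,1}}$ and the envelope $\varphi \le \bar v \le \varphi + w$, and (iii) remains a subsolution. Points (i) and (ii) force the bump to be very small and supported where $u$ is not "too flat"; the reconciliation is that near $x_0$ the polynomial $P$ touching $u$ from above (as in the proof of Proposition~\ref{prop:subsolution}) gives room, and the strict convexity of $\varphi$ gives the Lipschitz slack. For (iii), the essential input is the two-sided control of $\F_k^s$ under small perturbations of the function and its supporting plane, i.e. Proposition~\ref{prop:lem4.6} and monotonicity (Lemma~\ref{lem:properties}(b),(d)); this is exactly the analogue of the argument in \cite{CS}, so I would carry it out following that template and only flag the modifications forced by the anisotropic kernel class $\K_k^s$ (which are harmless since all the quantitative estimates in Section~\ref{sec:reg} are already stated for $\F_k^s$).
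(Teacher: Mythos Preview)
Your overall strategy---argue by contradiction and manufacture an admissible subsolution strictly larger than $u$ at some point---is exactly the paper's, but your execution has real gaps that the paper closes with a much more concrete competitor. After normalizing so that $u(x_0)=0$ and $\nabla u(x_0)=0$, the paper simply takes $u^\vep=\max\{u,\vep\}$; this is the ``max with the supporting plane plus a constant'' you were reaching for, and it gives convexity and the Lipschitz bound for free, with no cutoff $\zeta$ to design. On the flat set $\{u<\vep\}$ one has $\partial u^\vep=\{0\}$, and the key estimate $\F_k^s u^\vep(x_0)\geq \F_k^s u(x_0)-C\vep^{1-s}$ comes not from monotonicity or from Proposition~\ref{prop:lem4.6} but from the layer-cake representation of Proposition~\ref{prop:repW}: since $D_{x_0}u^\vep(t)=D_{x_0}u(t+\vep)$, the two formulas differ by the integral over $t\in(0,\vep)$, controlled via $\mu_{x_0}u(t,z)\geq(t-|z|^2)_+^{k/2}$. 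Your appeal to monotonicity at points where $\bar v>u$ is in the wrong direction (there is no touching), and the H\"older estimate of Proposition~\ref{prop:lem4.6} compares $\F_k^s u$ at two points, not $\F_k^s$ of two different functions; so the mechanism you describe for ``$\F_k^s\bar v\geq\F_k^s u-o(1)$'' does not go through as written.

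There is also a structural case you do not address: the set $\{u<\vep\}$ need not shrink to $\{x_0\}$ as $\vep\to0$, because the tangent plane at $x_0$ may touch $u$ on a larger set. In that case $|\varphi(x)-\varphi(x_0)|$ is not small on the flat region and one cannot absorb it into the gap $\delta$. The paper handles this by moving to a boundary point $x_1\in\partial\{u=0\}$ where $\varphi$ attains its maximum on $\{u=0\}$, then using lower semicontinuity of $\F_k^s u$ (Lemma~\ref{lem:properties}(d)) to pass to a nearby point $x_J$ where $u$ is strictly convex (a single touching point), and restarting the argument there. Your continuity step via Theorem~\ref{thm:regularity} does not substitute for this reduction.
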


\begin{proof}
Assume the statement is false. Then there exists some $x_0 \in \Rn$ such that
$$
\F_k^s u (x_0) > u(x_0) - \varphi(x_0).
$$
Without loss of generality, we may assume that $u(x_0)=0$ and $\nabla u(x_0)=0$. Otherwise, consider $v(x)= u(x) - u(x_0) -  (x-x_0) \cdot \nabla u(x_0)$. Then there exists some $\delta>0$ such that
\begin{equation} \label{eq:delta}
\F_k^s u (x_0) \geq  - \varphi(x_0) + \delta.
\end{equation}
 
Fix $\vep >0$ and let 
$
u^\vep(x) = \max\{u(x),\vep\}.
$
We will show that for $\vep$ sufficiently small, $u^\vep$ is an admissible subsolution, and thus, reaching a contradiction with $u$ being the largest subsolution. 
Indeed, $u^\vep$ is convex and $u^\vep \in C^{0,1}(\Rn)$ with 
 $[u^\vep]_{C^{0,1}(\Rn)} \leq [\varphi]_{C^{0,1}(\Rn)}.$
Moreover, note that $u^\vep(x) = u(x)$, for $x$ large. Hence, once we show that $u^\vep$ is a subsolution, it will follow from the comparison principle  that $\varphi \leq u^\vep \leq \varphi+w$.

If $x \in \{u_\vep=u\}$, then $u_\vep (x)=u(x)$ and $u_\vep \geq u$ in $\Rn$. By monotonicity (Lemma~\ref{lem:properties}),
\begin{equation*} 
\F_k^s u^\vep (x) \geq\F_k^s u (x) \geq u(x) - \varphi(x) =  u^\vep(x) - \varphi(x),
\end{equation*} 
since $u$ is a subsolution, by Proposition~\ref{prop:subsolution}.

If $x\in \{u^\vep > u\}$, then $u^\vep(x) = \vep$ and $\partial u^\vep(x)=\{0\}$. In particular, 
\begin{equation}\label{eq:constant}
\F_k^s u^\vep (x)= \F_k^s u^\vep(x_0). 
\end{equation}
Moreover, for any $t>0$, we have
$
D_{x_0} u^\vep (t) = \{ u^\vep - \vep \leq t\} = \{u \leq t+\vep \} = D_{x_0} u (t+\vep).
$
Therefore, in view of Proposition~\ref{prop:repW}, we get
\begin{equation} \label{eq:boundep}
 \F_k^s u^\vep(x_0) =  \F_k^s u(x_0) - \int_0^\vep \int_{\R^{n-k}} \frac{1}{|z|^{n-k+2s}} W \Big( \frac{\mu_{x_0} u(t,z)^{1/k}}{|z|}\Big) \, dz dt \geq \F_k^s u(x_0) - C \vep^{1-s}
\end{equation}
since $u\in C^{1,1}(\Rn)$ and $\mu_{x_0} u(t,z) \geq (t-|z|^2)_+^{k/2}$. 

Combining \eqref{eq:delta}, \eqref{eq:constant}, and \eqref{eq:boundep}, we see that
\begin{align*}
\F_k^s u^\vep(x) & = \F_k^s u^\vep(x_0)  \geq  \F_k^s u(x_0) - C \vep^{1-s} \geq - \varphi(x_0) + \delta  - C \vep^{1-s} \\
& =  u^\vep(x)- \varphi(x) + \big ( \varphi(x) - \varphi(x_0) + \delta - C \vep^{1-s} - \vep \big),
\end{align*}
since $u^\vep(x)=\vep$. We need the term inside the parenthesis to be nonnegative. Hence, it remains to control $\varphi(x) - \varphi(x_0)$. Since $\varphi$ is smooth,
$$
|\varphi(x) - \varphi(x_0)|  \leq [\varphi]_{C^{0,1}(\Rn)} |x-x_0|.
$$
We distinguish two cases. If $\{u=0\}=\{x_0\}$, then $|x-x_0| \leq d_\vep\to 0$, as $\vep \to 0$. Hence, choosing $\vep$ sufficiently small, we see that
$$
\varphi(x) - \varphi(x_0) +  \delta - C \vep^{1-s} - \vep \geq \delta - [\varphi]_{C^{0,1}(\Rn)} d_\vep - C \vep^{1-s}-\vep \geq 0.
$$
Therefore,  $u^\vep \in \mathcal{S}$, which contradicts $u^\vep(x_0)> u(x_0) = \sup_{v\in \S} v(x_0) \geq u^\vep(x_0)$.

Suppose now that $\{u=0\}$ contains more than one point. By compactness of $\{u=0\}$ and continuity of $\varphi$, there exists some $x_1 \in \{u=0\}$ where $\varphi$ attains its maximum. 
Then
$$
\F_k^s u(x_1) = \F_k^s u(x_0) \geq u(x_0) - \varphi(x_0) + \delta \geq u(x_1) - \varphi(x_1)+ \delta.
$$
Moreover, by convexity of $\{u=0\}$ (since $u\geq \varphi\geq0$) and $\varphi$, we must have that $x_1\in \partial \{u=0\}$. Hence, there exists $\{x_j\}_{j=2}^\infty \subset \{u>0\}$ such that $x_j\to x_1$ and $u$ is strictly convex at $x_j$. Namely, there is a supporting plane that touches $u$ only at $x_j$. 

By continuity of $u$, there exists some $j_0 \geq 2$ such that
$$
u(x_1) > u(x_j) - \delta/4, \quad \hbox{for all}~j> j_0.
$$
By continuity of $\varphi$, there exists some $j_1\geq 2$ such that
$$
\varphi(x_1) < \varphi(x_j) + \delta/4, \quad \hbox{for all}~j> j_1.
$$
By  lower semicontinuity of $\F_k^s u$, up to a subsequence, there exists some $j_2\geq 2$ such that
$$
\F_k^s u (x_j) > \F_k^s u(x_1) - \delta/4, \quad \hbox{for all}~j> j_2.
$$
Let $J> \max\{j_0,j_1,j_2\}$. Then
\begin{align*}
\F_k^s u(x_J) & > \F_k^s u( x_1) - \delta/4 \geq u(x_1) - \varphi(x_1) + 3\delta/4> u(x_J) - \varphi(x_J) + \delta/4,
\end{align*}
and we can repeat the previous argument, replacing $x_0$ by $x_J$.
We conclude that
\begin{equation*}
\F_k^s u (x_0) \leq u(x_0) - \varphi(x_0), \quad\hbox{for all}~x_0 \in \Rn.
\end{equation*}
\end{proof}

%%%%%%%%%%%%%%%%%%%%%%%%%%%%%%%%%%%%%
\section{Future directions} \label{sec:continuous}
%%%%%%%%%%%%%%%%%%%%%%%%%%%%%%%%%%%%%

 As mentioned in the introduction, the main idea to define a nonlocal analog to the Monge-Amp\`{e}re operator is to write it as a concave envelope of linear operators. More precisely,
\begin{equation*}
n \det(D^2 u(x))^{1/n} = \inf_{M \in \M} \tr(M D^2 u(x) ),
\end{equation*}
where $\M=\{ M \in \S^n: M>0,  \ \det(M)=1\}$ and $\S^n$ is the set of $n\times n$ symmetric matrices.
Note that this identity is equivalent to the one given in \eqref{eq:MAidentity} taking $M=AA^T$ and $B=D^2 u(x)$, since $\tr(A^TBA)=\tr(AA^T B)$.
In fact, this extremal property does not only hold for $n\det(B)^{1/n}$ with $B\in\S^n$ and $B>0$. 
If $\lambda=(\lambda_1,\dots,\lambda_n)$, where $\lambda_i$ are the eigenvalues of $B$, then the function $f$ defined in $\Gamma= \{\lambda \in \Rn : \lambda_i >0, \ \text{for all}~i=1,\hdots, n\}$, given by
$$
f(\lambda)= n \Big( \prod_{i=1}^n \lambda_i \Big)^{1/n}=n\det(B)^{1/n}
$$
 is differentiable, concave, and homogeneous of degree 1. In general, if $f$ satisfies these conditions in an open convex set $\Gamma$ in $\Rn$, then 
\begin{equation*}
f(\lambda) = \inf_{\mu \in \Gamma}\big \{ f(\mu) + \nabla f(\mu) \cdot (\lambda-\mu) \big\} 
= \inf_{\mu \in \Gamma} \nabla f(\mu) \cdot \lambda,
\end{equation*}
where the second identity follows by Euler's theorem. Therefore,
$$
f(\lambda) = \inf_{M\in \M_f} \tr(M B),
$$
where $\M_f = \{ M \in \S^n :  \lambda(M) \in \nabla f(\Gamma)\}$, $\nabla f (\Gamma) = \{ \nabla f (\mu) : \mu \in \Gamma\}$, and $\lambda(M)$ are the eigenvalues of $M$.

For instance, the $k$-Hessian functions introduced by Caffarelli--Nirenberg--Spruck in \cite{CNS} satisfy these conditions and, in fact, fractional analogs have been recently studied by Wu \cite{Wu}. It would be interesting to explore fractional analogs to a wider class of fully nonlinear concave operators, as the ones mentioned above.

We remark that the $1$-Hessian is equal to the Laplacian, and the $n$-Hessian is equal to the Monge-Amp\`{e}re operator. Moreover,  for $1<k<n$, we obtain an intermediate \textit{discrete} family between these operators.
In view of this observation, a natural question of finding a \textit{continuous} family connecting the Laplacian with the Monge-Amp\`{e}re operator arises. 
Here we suggest possible families that connect smoothly these two operators, passing through the $k$-Hessians, in some sense.
Indeed, let $\alpha \in (0,1]^n$ and denote $|\alpha|=\alpha_1 + \cdots +\alpha_n$. For $\lambda\in \Rn_+$, we consider the functions,
$$
f_\alpha (\lambda) =\Big( \sum_{\sigma \in S} \lambda_{\sigma(1)}^{\alpha_1} \cdots \lambda_{\sigma(n)}^{\alpha_n} \Big)^\frac{1}{|\alpha|},
$$
where $S$ is the set of all cyclic permutations of $\{1, \hdots, n\}$.
Observe that for any $1\leq k \leq n$, if $\alpha = \sum_{i \in \mathcal{I}} e_i$, with $|\mathcal{I}|=k$, then $f_\alpha$ is precisely the $k$-Hessian function. Consider any smooth simple curve $\gamma : [0,1] \to (0,1]^n$ such that 
\begin{enumerate}
\item $\gamma(0)=e_i$, for some $1\leq i\leq n$,
\item $\gamma(t_k) = \sum_{i \in \mathcal{I}_k} e_i$, with $|\mathcal{I}_k|=k$, and $0<t_k< t_{k+1} <1$, for all $1 < k <n$, and
\item $\gamma(1) = (1,\hdots,1)$. 
\end{enumerate}
Then the family $\{f_\alpha\}_{\alpha\in \operatorname{Im}(\gamma)}$ is as we described.
In particular, fractional analogs of these functions would give a continuous family from the fractional Laplacian to the nonlocal Monge-Amp\`{e}re. We will study this problem in a forthcoming paper.

\medskip

\noindent\textbf{Acknowledgments.} We would like to thank the referees for some important comments and clarifications, which have greatly improved the final exposition of this work.

%%%%%%%%%%%%%%%%%%%%%%%%%%%%%%%%%%%%%%%%%%%%%

\end{document}